\newtheorem{thm}{Theorem}[section]
\newtheorem*{thm*}{Theorem}
\newtheorem{corollary}[thm]{Corollary}
\newtheorem{prop}[thm]{Proposition}
\newtheorem{lemma}[thm]{Lemma}
\theoremstyle{definition}
\newtheorem{defn}[thm]{Definition}
\theoremstyle{remark}
\definecolor{energy}{RGB}{114,0,172}
\definecolor{freq}{RGB}{45,177,93}
\definecolor{spin}{RGB}{251,0,29}
\definecolor{signal}{RGB}{203,23,206}
\definecolor{circle}{RGB}{217,86,16}
\definecolor{average}{RGB}{203,23,206}
\colorlet{shadecolor}{gray!20}
\pgfplotsset{compat=1.9}
\newcommand{\Expected}[1]{\ensuremath{ \mathbb{E} \left[ #1 \right] } }
\renewcommand{\Pr}[1]{\ensuremath{\mathbb{P} \left(#1 \right) }}
\newcommand{\eps}{\varepsilon}
\newcommand{\ZZ}{\mathbb{Z}}
\newcommand{\I}[2]{\mathcal{I}(#1, #2)}
\newcommand{\W}[1]{\ensuremath{ w_{ #1 } }}
\newcommand{\dg}[2]{\ensuremath{ \ensuremath{ d_G(#1, #2) } }}
\newcommand{\pSFP}{\ensuremath{ p_{xy}^{\text{(SFP)}} }}
\newcommand{\pGIRG}{\ensuremath{ p_{xy}^{\text{(GIRG)}} }}
\renewcommand{\d}{\text{d}}
\newcommand{\disjointoccurence}{\ensuremath{\hspace{.1cm} \square \hspace{.1cm} }}
\newcommand{\SFFPP}{FPP\xspace}
\newcommand{\rate}{\lambda}
\newcommand{\costballll}[2]{\mathcal{B}(#1, #2)}
\newcommand{\distsymbol}{r}
\newcommand{\percparam}{\lambda}
\newcommand{\SFP}{SFP}
\newcommand{\alphaparam}{\alpha}
\newcommand{\tauparam}{\tau}
\newcommand{\graphdist}[2]{d_{G}(#1, #2)}
\newcommand{\graphdistcost}[2]{d^{\hspace{.05cm}\text{cost}}_{G}(#1, #2)}
\newcommand{\ceconst}{c_{1}}
\newcommand{\Cconst}{c_2}
\newcommand{\clow}{c_{\text{low}}}
\newcommand{\cupp}{c_{\text{upp}}}
\newcommand{\pconst}{\beta}
\newcommand{\bigDelta}{\Delta}
\newcommand{\myweightof}[1]{w_{#1}}
\newcommand{\geomdist}[2]{|#1 - #2|}
\newcommand{\IH}[3]{ {\myweightof{#1}}^{\alphaparam}{\myweightof{#2}}^{\alphaparam}\Cconst^{-1} \geomdist{#1}{#2}^{-\alphaparam d} (#3 + 1)^{-\pconst}e^{\ceconst {#3}^{1/\bigDelta}}}
\newcommand{\IHabbr}[4]{h(#1, #2, #3, #4)}
\newcommand{\hops}{k}
\newcommand{\funcconst}{C}
\newcommand{\intconst}{C_{\text{int}}}
\newcommand{\KP}[1]{}
\newcommand{\jl}[1]{}
\newcommand{\LS}[1]{}
\newcommand{\KL}[1]{}
\let\c@equation\c@thm
\numberwithin{equation}{section}
\author[Kostas Lakis]{Kostas Lakis}
\author[Johannes Lengler]{Johannes Lengler}
\author[Kalina Petrova]{Kalina Petrova}
\author[Leon Schiller]{Leon Schiller}
\thanks{Department of Computer Science, Institute of Theoretical Computer Science, ETH Z\"{u}rich, Switzerland.  \texttt{klakis@student.ethz.ch,\{johannes.lengler,kalina.petrova,leon.schiller\}@inf.ethz.ch}. Research of K.P. funded by SNSF grant No.  CRSII5 173721. K.L. gratefully acknowledges support from the John S. Latsis Public Benefit Foundation and the Onassis Foundation. Research of L.S. funded by SNSF grant No. 197138}
\keywords{Mathematics, Probability Theory, Combinatorics, Random Graphs, Random Metric Spaces}
\subjclass[2010]{11C08 (primary), 41A10 (secondary)}
\title[Improved Bounds for Polylogarithmic Graph Distances in Scale-Free Percolation]{Improved Bounds for Polylogarithmic Graph Distances in Scale-Free Percolation and Related Models}
\begin{document}

\newcommand{\ultratinypentagram}{
    \begin{tikzpicture}[baseline=-0.8ex]
        \node[regular polygon, regular polygon sides=5, minimum size=2mm, draw, inner sep=0pt, outer sep=0pt, line width=0.05mm] at (0,0) (pentagon) {};
        \foreach \x in {1,2,...,5} {
            \foreach \y in {\x,...,5} {
                \ifnum\x=\y\relax\else
                    \draw[line width=0.05mm] (pentagon.corner \x) -- (pentagon.corner \y);
                \fi
            }
        }
    \end{tikzpicture}
}\textit{}

\newcommand{\tinyKfour}{
    \begin{tikzpicture}[baseline=-0.8ex]
        \node[regular polygon, regular polygon sides=4, minimum size=2mm, draw, inner sep=0pt, outer sep=0pt, line width=0.05mm] at (0,0) (quad) {};
        \foreach \x in {1,2,...,4} {
            \foreach \y in {\x,...,4} {
                \ifnum\x=\y\relax\else
                    \draw[line width=0.05mm] (quad.corner \x) -- (quad.corner \y);
                \fi
            }
        }
    \end{tikzpicture}
}
\newcommand{\CFFPP}{CFFP\xspace}

\begin{abstract}
In this paper, we study graph distances in the geometric random graph models scale-free percolation SFP, geometric inhomogeneous random graphs GIRG, and hyperbolic random graphs HRG. 
Despite the wide success of the models, the parameter regime in which graph distances are polylogarithmic is poorly understood. We provide new and improved lower bounds. In a certain portion of the parameter regime, those match the known upper bounds.

Compared to the best previous lower bounds by Hao and Heydenreich~\cite{hao2023graph}, our result has several advantages: it gives matching bounds for a larger range of parameters, thus settling the question for a larger portion of the parameter space. It strictly improves the lower bounds of~\cite{hao2023graph} for all parameters settings in which those bounds were not tight. It gives tail bounds on the probability of having short paths, which imply shape theorems for the $k$-neighbourhood of a vertex whenever our lower bounds are tight, and tight bounds for the size of this $k$-neighbourhood. And last but not least, our proof is much simpler and not much longer than two pages, and we demonstrate that it generalizes well by showing that the same technique also works for first passage percolation.
\end{abstract}
\maketitle
\thispagestyle{empty}
\clearpage
\setcounter{page}{1}

\newpage

\section{Introduction and Main results}\label{intro}

In the last years, a family of random graph models including \emph{hyperbolic random graphs} (HRG)~\cite{krioukov2010hyperbolic}, \emph{scale-free percolation} (SFP)~\cite{deijfen2013scale}, and \emph{geometric inhomogeneous random graphs} (GIRG)~\cite{bringmann2019geometric}, has emerged as a model for large real-world networks. They combine an underlying geometric space with an inhomogeneous degree distribution. This combination yields many properties that occur in real-world networks across a wide range of domains, such as strong clustering, a rich community structure, ultra-small distance, small separators, compressibility, and more~\cite{bringmann2019geometric}. The networks have been used empirically and theoretically to study algorithms like local routing protocols~\cite{blasius2020hyperbolic,boguna2010sustaining,bringmann2022greedy}, bidirectional search~\cite{blasius2024external} or maximum flow~\cite{blasius2020efficiently}, spreading processes like bootstrap percolation~\cite{koch2021bootstrap} or SI models~\cite{komjathy2020explosion,komjathy2021penalising,komjathy2023four,komjathy2023fourtwo}, and they have been used to study the effectiveness of different interventions during the Covid19 pandemic~\cite{goldberg2021increasing,jorritsma2020not,odor2021switchover}.

Despite this widespread adoption, the fundamental question of graph distances has been open for some parameter regimes. In general, the models come with two parameters: the degrees follow a power-law distribution with exponent $\tau > 1$, i.e., $\Pr{\deg(v) \ge x} \sim x^{1-\tau}$ for any fixed vertex $v$;\footnote{The distribution is often allowed to vary by constant factors or slowly varying functions, but this will not be relevant for this paper.} and $\alpha >1$ determines the number of \emph{weak ties}~\cite{granovetter1973strength} in the network, i.e.\ edges which are present although their geometric distance and degrees suggest otherwise.\footnote{The traditional SFP parameterization uses two parameters $\tau$ and $\gamma$ instead of the one parameter $\tau$. However, one of those parameters is internal to the graph generation process and does not yield additional classes of graphs, which is why we omit the additional parameter.} If $\tau < 3$ then for two random vertices $x,y$ in the giant component, with high probability\footnote{We say an event occurs \emph{with high probability}, or w.h.p., if it occurs with probability $1 - o(1)$.} their graph distance $d_G(x,y)$ is at most doubly logarithmic in their geometric distance, i.e., $d_G(x,y) = O(\log \log |x-y|)$. For $\tau \le 2$ we even have $d_G(x,y) = O(1)$. These regimes are very precisely understood~\cite{abdullah2017typical,bringmann2016average,deijfen2013scale}.\ On the other hand, if $\tau > 3$ and $\alpha >2$, then it is known that the graph distance of two vertices $x,y$ grows linearly with their geometric distance, and this is again well understood~\cite{deprez2015inhomogeneous,Berger_2004}. 

However, in the \emph{polylogarithmic regime} $\tau >3$ and $\alpha\in (1,2)$, the picture is incomplete. It is understood in the limiting case $\tau =\infty$, which is known as \emph{long-range percolation} LRP, that $d_G(x,y) = (\log|x-y|)^{\Delta(\alpha) \pm o(1)}$ where $\Delta(\alpha) = 1/\log_2(2/\alpha) > 1$~\cite{Biskup_2004,Berger_2004}. Since graph distances can only increase with $\tau$, the upper bound applies for any $\tau >1$, and this is the best known upper bound.\footnote{An improved upper bound was claimed in~\cite{hao2023graph}, but the proof turned out to be wrong, see \Cref{sec:wrong_proof} for details.} On the other hand, it is easy to see that for $\tau > 3$ the $k$-neighbourhood can grow at most exponentially, so distances are at least logarithmic, $d_G(x,y) = \Omega(\log|x-y|)$~\cite{deijfen2013scale}. Hence we know that distances are polylogarithmic for $\tau >3$ and $\alpha\in(1,2)$. But the exponents of the upper and lower bound ($\Delta$ and $1$ respectively) did not match, and this gap remained open for a long time. 

Very recently, Hao and Heydenreich~\cite{hao2023graph} could show an improved lower bound of $d_G(x,y) \ge (\log|x-y|)^{\Delta(\min\{\alpha,(\tau-1)/2\}) - o(1)}$, where, as before, $\Delta(x) = 1/\log_2(2/x)$. This closed the gap in the case that $\alpha < (\tau-1)/2$, since then $\min\{\alpha,(\tau-1)/2\} = \alpha$. Their proof had 9 pages and was a complicated application of Biskup's hierarchy argument~\cite{Biskup_2004}. In this paper we give a stronger lower bound with a much simpler inductive proof of only about two pages. It is inspired by ideas of Biskup for the simpler case of LRP \cite{Biskup_2011}, which themselves are adaptations of those in \cite{Trapman_2010}.  More precisely, we show that $d_G(x,y) = \Omega \left((\log|x-y|)^{\Delta(\min\{\alpha,\tau-2 - o(1)\})}\right)$. This closes the gap between upper and lower bound whenever $\alpha < \tau-2$, which comprises a strictly larger portion of the polylogarithmic regime than the bound of~\cite{hao2023graph}. Moreover, throughout the polylogarithmic regime, since $\tau >3$ implies $\tau-2 > (\tau-1)/2$, our bound is strictly stronger in all cases in which the bound of~\cite{hao2023graph} is not tight. 

Our result is also stronger in two other aspects: Firstly, we provide strong tail bounds on the probability $\Pr{d_G(x,y)\le k}$.
These yield a \emph{shape theorem} for the geometric shape of the $k$-neighbourhood of a fixed vertex for growing $k$, in the case $\alpha <\tau-2$ when our lower bound matches the upper bound. The shape theorem also implies that the size of the $k$-neighbourhood of a fixed vertex grows as $e^{k^{1/\Delta \pm o(1)}}$ as $k\to\infty$, which was not known before. Secondly, due to its simplicity, we believe that our method is also potentially easier to generalize. As demonstration, we show that a similar lower bound holds not only for graph distances, but also for first passage percolation. 

In the following, we will start by formally defining the graph models and stating our precise results. The heart of the paper is Section~\ref{sec:SFP}, where we prove our result for the SFP model. In \Cref{sec:BK_inequality_and_proofs}, we explain briefly why the proof carries over with minor modifications for the GIRG model, which in turn includes HRG as a special case. In Section~\ref{sec:FPP}, we extend the proof to first-passage percolation. Finally, in~\Cref{sec:wrong_proof} we explain why the proof of the upper bound claimed in~\cite{hao2023graph} is incorrect.

\subsection{Preliminaries and Random Graph Models}\label{sec:models}
Our results hold for \emph{Scale Free Percolation} (SFP), \emph{Geometric Inhomogeneous Random Graphs} (GIRG), and \emph{Hyperbolic Random Graphs} (HRG). We will define LRP and GIRG formally in this section. HRG has been shown to be a special case of GIRG~\cite{bringmann2019geometric}, so all results proven for GIRG automatically also hold for HRG, and we do not need to formally define HRG. A formal definition of HRG together with its connection to GIRG can be found in~\cite{bringmann2019geometric}. 

\medskip\paragraph{\textbf{Scale Free Percolation (SFP)}} For SFP, we start with the infinite\footnote{Traditionally, LRP is defined on an infinite vertex set while GIRG is defined on a finite vertex set, following the tradition of mathematics for LRP and of computer science for GIRG. However, both models can be defined in either a finite or an infinite version, and this does not affect graph distances, see~\cite{komjathy2020explosion} for details.} $d$-dimensional grid, which is our set of vertices. For two points $x, y \in \mathbb{Z}^d$, we define their distance $|x - y|$ via the usual Euclidean norm. Moreover, each vertex draws a \emph{weight} $\W{x}$ independently identically distributed from a power-law distribution. For our purposes, this is a Pareto distribution satisfying
\begin{align*}
    \Pr{\W{x} \ge z} = z^{1-\tau}
\end{align*} for $z \ge 1$, where the parameter $\tau > 1$ is the \emph{power-law exponent}.

We add edges in two different ways. First, we place the usual \emph{grid edges}\footnote{Some variants of SFP do not include grid edges. Since we prove lower bounds on graph distances, we make our result stronger by including grid edges.} between points that are adjacent in $\mathbb{Z}^d$, i.e., we place an edge between $x = (x_1, \ldots, x_d)$ and $y = (y_1, \ldots, y_d)$ if there is some coordinate $1 \le i \le d$ such that $|x_i - y_i| = 1$ and $x_j = y_j$ for all $j \neq i$. Second, we randomly create long-range edges, also called \emph{weak ties}, by placing an edge between $x, y \in \mathbb{Z}^d$ with probability $p_{xy}$, which is defined as\footnote{In the literature, the connection probability is often defined as $1 - \exp( - \lambda |x- y|^{-\alpha d} )$. We remark that this differs from our connection probability at most by a constant factor and that our proof techniques are sufficiently robust to also deal with this setting. } 
\begin{align}\label{eq:SFP}
    \pSFP \coloneqq \min\left\{ 1 , \lambda \left( \frac{w_xw_y}{|x - y|^{ d}} \right)^\alpha \right\},
\end{align}
where $\lambda > 0, \alpha > 1$ are constants. We write $x \sim y$ if there is an edge between $x, y \in \ZZ^d$ and $x \nsim y$ otherwise. 

We remark that the parameterization is slightly different from the original one used in~\cite{deijfen2013scale} and also in~\cite{hao2023graph}. Compared to our formulation, they used auxiliary weights $w_x' := w_x^\alpha$, which were then drawn from a power-law distribution with exponent $\tau' = 1+d(\tau-1)/\alpha$. These two parameterizations are equivalent, but our formulation has the advantage that the weights correspond, up to constant factors, to the expected degree of the vertices, $\Expected{\deg(x) \mid w_x} = \Theta(w_x)$. Our formulation also saves an internal parameter of the model. Moreover, we rescaled $\alpha$ by a factor $d$ to match it with the parameterization of GIRG. When comparing our results with the lower bounds in~\cite{hao2023graph}, the following transformations are needed, where the subscript ``\hspace{1sp}\cite{hao2023graph}'' indicates notation from that paper, and parameters without subscript are from our paper.
\begin{align}\label{eq:Hao_parameters}
    \alpha_{\text{\hspace{1sp}\cite{hao2023graph}}} = \alpha d \qquad\text{ and }\qquad \gamma_{\text{\hspace{1sp}\cite{hao2023graph}}} = \tau-1.
\end{align}
The internal parameter $\tau_{\text{\hspace{1sp}\cite{hao2023graph}}}$ is superfluous and does not have a correspondence in our paper.

\medskip

\paragraph{\textbf{Geometric Inhomogeneous Random Graphs (GIRG)}}

The main difference between SFP and GIRG is that in GIRG, the positions of vertices are randomly chosen. Namely, for some large enough $n \in \mathbb{N}$, we consider a cube $\mathcal{X}$ of volume $n$ in $\mathbb{R}^d$, where $d$ is a constant. Our vertex set $V$ then consists of $n$ vertices, where the position $\xi_x$ of each vertex $x$ is picked independently at random from the uniform distribution over $\mathcal{X}$. The distance between two vertices $x,y \in V$ is defined as the Euclidean norm  $|\xi_x - \xi_y|$, as in SFP. Again similarly to SFP, each vertex $x$ draws a weight $w_x$ independently from the Pareto distribution satisfying 
\begin{align*}
    \Pr{\W{x} \ge z} = z^{1-\tau}
\end{align*} for $z \ge 1$, with $\tau > 1$. Finally, two different vertices $x$ and $y$ are connected by an edge with probability\footnote{The original paper~\cite{bringmann2019geometric} used a geometry that was rescaled by a factor $n^{1/d}$. I.e., they used a cube of volume one and had an additional factor $n$ in the denominator of~\eqref{eq:GIRG}. Both variants are equivalent, but our scaling aligns better with the SFP scaling and allows GIRGs to be extended to infinite graphs if desired~\cite{komjathy2020explosion}.}
\begin{align}\label{eq:GIRG}
\pGIRG = \Theta\left(\min\left\{ 1 , \left( \frac{w_xw_y}{|\xi_x - \xi_y|^{ d}} \right)^\alpha \right\}\right),
\end{align}
where the hidden constants are uniform over all $x,y$. Formally, we require that there are two absolute constants $\clow,\cupp >0$ independent of $n$ such that for all $n$ and any two different vertices $x,y\in V$, conditional on their weights $w_x,w_y \ge 1$ and positions $\xi_x,\xi_y\in \mathcal X$, 
\begin{align*}
\clow \min\left\{ 1 , \left( \frac{w_xw_y}{|\xi_x - \xi_y|^{ d}} \right)^\alpha \right\} \le \pGIRG \le \cupp\min\left\{ 1 , \left( \frac{w_xw_y}{|\xi_x - \xi_y|^{ d}} \right)^\alpha \right\}.
\end{align*}
The reason for allowing constant factor deviations is that then hyperbolic random graphs (HRG) is a special case of GIRG with $d=1$, where the Euclidean distance is replaced by the angular distance in hyperbolic space~\cite{bringmann2019geometric}. Hence, any statement proven for this version of GIRG also holds for HRG.

Note that the constants $\tau$ and $\alpha$ have an analogous role here as in SFP. A notable difference to SFP is that in GIRG there are no grid edges (since positions are no longer on the grid), but that does not significantly affect our results. 
\medskip

\paragraph{\textbf{Long Range Percolation (LRP)}}

To put our results in an adequate context, we sometimes reference the model of \emph{Long Range Percolation} (LRP) which is a special case and predecessor of SFP, cf. e.g. \cite{Biskup_2004, Biskup_Lin_2019}. We define LRP completely analogously to SFP with the only modification that every vertex has a (deterministic) weight of $1$.

\medskip

\paragraph{\textbf{First Passage Percolation (FPP)}}

Our lower bounds on graph distances are (in a similar form) also applicable to \emph{First Passage Percolation} (FPP) on the graph models SFP, GIRG, HRG, and also LRP. 
Here, each edge $e$ of the graph draws a random length or cost $c_{e}$ independently identically distributed (i.i.d.) from a distribution over the non-negative reals. For two vertices $x,y$ we are then interested in the \emph{minimal/infimal cost} of all paths from $x$ to $y$. This can be done on an arbitrary finite or infinite underlying graph. In this work we restrict our attention to the case where each edge cost is sampled from an exponential distribution with rate $1$. The formal definition is as follows.

\begin{defn}[First Passage Percolation (FPP)]\label{def:fpp}
    We call the following process \emph{First Passage Percolation}, in short FPP. Given a graph $G=(V,E)$, assign to each edge $e$ an i.i.d.\ \emph{cost} $c_e$ sampled from an exponential distribution with rate 1. For a finite path $\pi$, we define the \emph{cost} of that path as
    \[c(\pi) = \sum\limits_{e \in \pi}c_e\]
    Then, the \emph{cost-distance} or \emph{first passage time} between two vertices $x$ and $y$ is the minimum (or infimum) cost of any finite path connecting $x$ and $y$, i.e. 
    \[\graphdistcost{x}{y} := \inf\{c(\pi) : \pi \in \mathcal{P}_{x,y}\} \quad \text{for} \quad x,y \in V,\]   
    where $\mathcal{P}_{x,y}$ is the set of all finite paths between $x$ and $y$.
\end{defn}

Recently, FPP on SFP, GIRG, and HRG was studied \cite{komjathy2020explosion, komjathy2023four, komjathy2023fourtwo, completeFPP}. 
In particular, it was shown in \cite{komjathy2020explosion, komjathy2023four} that \SFFPP on those graphs exhibits an \emph{explosive} behaviour if the vertex weights have infinite variance (i.e.\ if $\tau <3$). This means that the cost-distance between two vertices $x,y$ converges in distribution against a random variable that is finite almost surely. In the infinite model SFP, this means that there are infinitely many vertices reachable within finite cost from a given vertex; in the finite models GIRG and HRG, a constant fraction of all vertices have cost-distance $\mathcal{O}(1)$. We emphasize that this is not true for \emph{graph distances} for $\tau\in (2,3)$ since then degrees are finite almost surely, so the number of vertices in graph distance $C$ is finite/constant for any constant $C>0$.

We remark that a model with a similar name, \emph{long-range first passage percolation} was introduced and studied in~\cite{completeFPP}. This is \emph{not} FPP on  LRP, but a different model with the complete graph on $\ZZ^d$ (all edges are present, degrees are infinite), where transmission times are penalized for edges between vertices of large Euclidean distance. Despite the differences, the models are related, and our analysis of FPP uses a coupling to a similar model and is inspired by~\cite{completeFPP}.

\paragraph{\textbf{Terminology}}

We are interested in the typical \emph{graph distance} $d_G(x,y)$ between two vertices $x,y$ in SFP, GIRG, and HRG, and the \emph{cost-distance} $\graphdistcost{x}{y}$ in FPP.\footnote{Formally, the graph distance is also defined in \Cref{def:fpp} by setting $c(e):= 1$ for all edges $e$.}
We are interested in the asymptotic behavior of $\dg{x}{y}$ and $\graphdistcost{x}{y}$ in terms of the Euclidean distance $|x- y|$. That is, we study how $\dg{x}{y}$ and $\graphdistcost{x}{y}$ scale as functions of $|x - y|$ when $|x - y| \rightarrow \infty$. We define the function $\Delta(\beta) := \frac{1}{\log_2(2/\beta)}$, which will appear in the exponent governing the polylogarithmic behavior of graph distances. Throughout, we use $\Delta = \Delta(\alpha)$ where $\alpha$ is the long-range parameter of the relevant model.

Previous work showed that both LRP and SFP exhibit multiple phase transitions in the asymptotic behavior of $\dg{x}{y}$ depending on the model parameters, which were briefly summarized in the introduction and are further summarized (together with our results) in \Cref{tab:results}.

\begin{table}[H]
   \small
   \centering
   
   {\renewcommand{\arraystretch}{1.3}
   \setlength{\extrarowheight}{2pt}
   \begin{tabular}{l||c|c|c|c}
   \toprule\toprule
   \textbf{Model} & \multicolumn{2}{c}{\textbf{$\alpha \in (1,2)$}} & \multicolumn{2}{|c}{ $\alpha > 2$ } \\ 
   \hline
   \hline
   LRP & \multicolumn{2}{c}{
   \makecell{
        $\Theta\left(\log(|x-y|)^{\Delta(\alpha)}\right)$ \cite{Biskup_2004, Biskup_Lin_2019}
   }
   } & \multicolumn{2}{|c}{
        \makecell{
            $\Theta(|x-y|)$ \cite{Berger_2004}
        }
    }\\
   \hline
   \hline
     & $\tau \in (2,3)$ & $\tau > 3$ & $\tau \in (2,3)$ & $\tau > 3$\\ \hline
    SFP & \makecell{ 
        $\Theta(\log\log(|x-y|))$  \\ \cite{deijfen2013scale}
    } & \makecell{
        $\le \log(|x-y|)^{\Delta(\alpha) + o(1)}$ \cite{Biskup_2004, Biskup_Lin_2019} \footnote{}\\
        \boldmath
        $\ge \log(|x-y|)^{\Delta(\min\{ \alpha, \tau - 2\}) - o(1) }$ \\ \textbf{\Cref{cor:lowerboundSFP}}
        \unboldmath
    }& \makecell{$\Theta(\log\log(|x-y|))$ \\ \cite{deijfen2013scale}} & \makecell{$\Theta(|x - y|)$ \\\cite{Berger_2004}} \\
   \hline
   FPP on SFP & $\Theta(1)$ \cite{komjathy2023four} & \makecell{ $ \le \log(|x-y|)^{\Delta(\alpha) + o(1)}$ \cite{komjathy2023four} \\
   \boldmath
   $\ge \log(|x-y|)^{\Delta(\min\{\alpha, \frac{\tau - 1}{2}\}) - o(1)}$ \\ \textbf{\Cref{cor:lowerboundFPP}}
   \unboldmath
   } & $\Theta(1)$ \cite{komjathy2023four} & ? \\
   \bottomrule
   \bottomrule
   \end{tabular}}

   \begin{align*}
   \end{align*}
   \caption{Upper and lower bounds for graph distances and cost-distances in long-range percolation (LRP),
   scale-free percolation (SFP) and first passage percolation (FPP) on SFP, with $\Delta(\beta) = 1/\log_2(2/\beta)$. The results on graph distance for SFP also hold for geometric inhomogeneous random graphs (GIRG) and hyperbolic random graphs (HRG). Our results are indicated in bold.}   \label{tab:results} 
\end{table}\footnotetext{In \cite{hao2023graph}, the authors claim to prove an improved logarithmic upper bound with exponent $\Delta(\min\{ \alpha, \tau - 2 \})$, which would match our lower bounds. However, we show that their proof is wrong, see \Cref{sec:wrong_proof}.}

\subsection{Our results}
In this section, we formally state our main results. To keep the exposition simple, we state them only for SFP, not for GIRG and HRG. All results on graph distances in this section also hold for GIRG and HRG, where all constants can be chosen independently of the number $n$ of vertices. We give details on that in~\Cref{sec:proof-GIRG}.

We prove stronger lower bounds for graph distances in the logarithmic regimes of SFP with a much simpler proof than in \cite{hao2023graph}. Furthermore, we also obtain a \emph{shape theorem} which sandwiches the $k$-neighbourhood of a given vertex between two geometric balls of similar size. The key result is a general upper bound on the probability that two vertices have graph/cost-distance at most $k$. For the following claims, recall that $\Delta(\beta) = 1/\log_2(2/\beta)$.

\begin{restatable}[Tail Bound for Graph Distances in SFP]{thm}{mainSFPlowerboundTheorem}\label{thm:mainSFPlowerboundTheorem}
Consider SFP with parameters $\alphaparam{} \in (1, 2), \tauparam{} > 3$ and $\percparam$. 
Fix any sufficiently small $\eps > 0$ and let $\Delta' = \Delta(\min\{\alphaparam, \tauparam - 2 - \eps\})$. Then, there exist constants $\ceconst{}, \Cconst{}, \pconst{}$ depending on the model parameters as well as $\eps$ such that for any pair of vertices $x, y \in \mathbb{Z}^d$ and any $k \in \mathbb{N}$, we have
\begin{align*}
    \Pr{\graphdist{x}{y} \le \hops} \le \Cconst^{-1} \geomdist{x}{y}^{-\alphaparam d} (\hops + 1)^{-\pconst} \exp\left(\ceconst {\hops}^{1/\Delta'}\right).
\end{align*}
\end{restatable}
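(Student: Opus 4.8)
The plan is to prove the bound by induction on $k$, following the hierarchy/renormalization idea of Biskup \cite{Biskup_2011,Trapman_2010} as adapted to the weighted setting. Write $h(x,y,k) := \Cconst^{-1}|x-y|^{-\alpha d}(k+1)^{-\beta}\exp(\ceconst k^{1/\Delta'})$ for the claimed right-hand side (this is essentially the macro \verb|\IHabbr| in the preamble, with the weight factors playing a role I describe below). The key recursive feature is that $\Delta'$ is chosen so that $2/\min\{\alpha,\tau-2-\eps\} = 2^{1/\Delta'}$, i.e.\ halving the number of allowed hops corresponds to raising the geometric exponent in a way that the induction can absorb. The base case $k=1$ (or some small constant) is immediate: $\Pr{x\sim y}$ is at most $\lambda(w_xw_y/|x-y|^d)^\alpha$ by \eqref{eq:SFP}, and after taking expectations over the weights — using that $\Expected{w_x^\alpha}<\infty$ precisely because $\tau>1$ forces the tail exponent of $w^\alpha$ to exceed $1$, and more importantly $\tau-2-\eps$ controls the relevant moment — one recovers a bound of the form $C^{-1}|x-y|^{-\alpha d}$ times constants, which fits into $h(x,y,1)$ for suitable $\Cconst,\beta$.

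For the inductive step, suppose $d_G(x,y)\le k$ with $k\ge 2$. Then there is a path of length at most $k$ from $x$ to $y$; split it at its midpoint (the $\lceil k/2\rceil$-th vertex $z$), so that $d_G(x,z)\le \lceil k/2\rceil$ and $d_G(z,y)\le \lfloor k/2\rfloor$ for some vertex $z\in\ZZ^d$. The two sub-paths are edge-disjoint, so by the BK-type inequality (this is what \Cref{sec:BK_inequality_and_proofs} is for; the events ``$d_G(x,z)\le k_1$'' are increasing in the edge set) we get
\[
\Pr{d_G(x,y)\le k} \le \sum_{z\in\ZZ^d} \Pr{d_G(x,z)\le \lceil k/2\rceil}\cdot\Pr{d_G(z,y)\le\lfloor k/2\rfloor}.
\]
Now apply the induction hypothesis to each factor. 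The crux is the geometric sum $\sum_{z} |x-z|^{-\alpha d/?}|z-y|^{-\alpha d/?}$ — but here is the subtlety forcing the weight-decorated induction hypothesis: with a naive $|x-y|^{-\alpha d}$ the convolution over $\ZZ^d$ need not converge (the exponent $\alpha d < 2d$). The standard fix, which I expect is what the paper does, is to carry an induction hypothesis of the shape $h(x,y,k) = w_x^{\alpha}w_y^{\alpha}\Cconst^{-1}|x-y|^{-\alpha d}(k+1)^{-\beta}e^{\ceconst k^{1/\Delta'}}$ (exactly the macro \verb|\IH| in the preamble) — i.e.\ a pointwise bound \emph{conditioning on all weights}, not yet in expectation — and to handle the convolution by a dyadic decomposition of $\ZZ^d$ into annuli around $x$ and $y$, using the power-law tail $\Pr{w_z\ge t}=t^{1-\tau}$ to bound $\sum_z \Expected{w_z^{2\alpha}}\cdot(\text{geometric factors})$. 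This is where $\tau-2$ enters: the sum over a dyadic shell of radius $R$ contributes $R^d$ vertices each carrying $\Expected{w_z^{2\alpha}}$ moments, and the whole thing converges and telescopes correctly exactly when $\alpha < \tau-2$, with the $\eps$ slack absorbing boundary cases.

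The bookkeeping then has to show $h(x,z,\lceil k/2\rceil)\cdot h(z,y,\lfloor k/2\rfloor)$, summed over $z$, is at most $h(x,y,k)$. The exponential factors multiply to give $\exp(\ceconst(\lceil k/2\rceil^{1/\Delta'}+\lfloor k/2\rfloor^{1/\Delta'}))$, and by convexity/concavity of $t\mapsto t^{1/\Delta'}$ (note $1/\Delta' > 1$ since $\Delta'<1$... actually $\Delta'>1$ so $1/\Delta'<1$ and the map is concave, giving $\lceil k/2\rceil^{1/\Delta'}+\lfloor k/2\rfloor^{1/\Delta'} \le 2(k/2)^{1/\Delta'} = 2^{1-1/\Delta'}k^{1/\Delta'}$), we get a contraction factor $2^{1-1/\Delta'}<1$ in the exponent — wait, we need the product to be \emph{bounded by} $\exp(\ceconst k^{1/\Delta'})$, and $2\cdot(k/2)^{1/\Delta'} = 2^{1-1/\Delta'}k^{1/\Delta'} < k^{1/\Delta'}$ since $1-1/\Delta'>0$; that is, the induction even has room to spare, and the slack is spent paying for the constant $\Cconst^{-1}$ blowup (one extra factor of $\Cconst$ appears when merging two copies) and the polynomial $(k+1)^{-\beta}$ factors. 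Finally, one takes expectations over $w_x,w_y$ to pass from the weight-conditioned bound to the stated bound, using $\Expected{w^\alpha}<\infty$.

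I expect the main obstacle to be the convolution estimate: making the sum $\sum_{z\in\ZZ^d}|x-z|^{-\alpha d}|z-y|^{-\alpha d}\Expected{w_z^{2\alpha}}$ converge and bounding it by $C\cdot|x-y|^{-\alpha d}$ up to constants — this is false for the pure power law and only works because one gets to \emph{upgrade} the exponent at each halving step (from $\alpha d$ toward $\tau d$ or similar via the weights), and threading that upgrade through the induction so that it closes at exactly the claimed $\Delta'$ is the delicate part. The $k=1$ base case and the exponential-factor arithmetic are routine by comparison.
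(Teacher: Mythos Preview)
Your decomposition is different from the paper's and does not close the induction at the claimed exponent. You split at the midpoint \emph{vertex} $z$ in hop count and sum over all $z\in\ZZ^d$. But the sum $\sum_{z}\min\{1,h(x,z,k/2)\}\min\{1,h(z,y,k/2)\}$ is dominated by $z$ lying in the ball around $x$ on which the first factor equals $1$; that ball has volume of order $\exp\bigl(\tfrac{c_1}{\alpha}(k/2)^{1/\Delta'}\bigr)$, while for such $z$ the second factor is roughly $|x-y|^{-\alpha d}\exp\bigl(c_1(k/2)^{1/\Delta'}\bigr)$. Hence the sum is of order $|x-y|^{-\alpha d}\exp\bigl(c_1(1+\tfrac{1}{\alpha})(k/2)^{1/\Delta'}\bigr)$, and closing the induction would require $1+\tfrac{1}{\alpha}\le 2^{1/\Delta'}=2/\alpha$, i.e.\ $\alpha\le 1$, which fails throughout the regime $\alpha\in(1,2)$. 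The exponential slack $2\cdot 2^{-1/\Delta'}<1$ you identified is real, but it is more than consumed by the ball-volume factor coming from the sum over $z$; it is not merely spent on constants and polynomials. The paper instead decomposes at the \emph{geometrically longest edge} $(u,v)$ of the path, which must satisfy $|u-v|\ge|x-y|/k$. The probability of that single edge already supplies the factor $|x-y|^{-\alpha d}k^{\alpha d}$ (times $w_u^\alpha w_v^\alpha$); one then union-bounds over the split index $i$ and over the positions of $u$ and $v$ separately, and by BK the three pieces $\{d_G(x,u)\le i\}$, $\{u\sim v\}$, $\{d_G(v,y)\le k-i\}$ factorize. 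Integrating the inductive bound over the position of $u$ (resp.\ $v$) yields a ball volume $\exp\bigl(\tfrac{c_1}{\alpha}i^{1/\Delta'}\bigr)$ (resp.\ with $k-i$), so by concavity the worst case $i=k/2$ gives coefficient $\tfrac{2}{\alpha}$ in the exponent rather than your $1+\tfrac{1}{\alpha}$ --- and $\tfrac{2}{\alpha}=2^{1/\Delta'}$ is exactly the defining relation for $\Delta'$, so the exponentials match and the $(k+1)^{-\beta}$ factors absorb the leftover polynomial overhead.

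A secondary point: you locate the threshold $\tau-2$ in the finiteness of $\Expected{w_z^{2\alpha}}$, but that condition is $2\alpha<\tau-1$, which is the weaker Hao--Heydenreich threshold $(\tau-1)/2$. In the paper's decomposition the long-edge endpoint $u$ carries $w_u^{\alpha}$ from the edge probability and an additional factor $w_u$ from the position integral (since $\mathcal I(u,i)\asymp w_xw_u\cdot c_2^{-1/\alpha}e^{(c_1/\alpha)i^{1/\Delta'}}$), giving $w_u^{\alpha+1}$ in total; integrating this out requires $\alpha+1<\tau-1$, i.e.\ $\alpha<\tau-2$. The complementary case $\alpha\ge\tau-2$ is then handled by a monotone coupling that decreases $\alpha$ to $\tau-2-\eps$.
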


The above bound is proved by induction over $k$ in \Cref{sec:SFP}. We express the probability that a path of length $\le k$ exists recursively by decomposing the path into two parts connected by an edge which has the longest geometric distance on the original path. Applying the inductive hypothesis and integrating over all possible endpoints and weights of the endpoints of said edge then yields the desired bound. However, this actually only works if $\alpha < \tau - 2$, which is needed to ensure convergence of some involved integrals. We remedy this and make the theorem applicable also to the case $\alpha \ge \tau - 2$ by using a coupling argument. Here (see Lemma~\ref{lem:changealpha} for the exact statement), we argue that decreasing $\alpha$ only makes the model denser, so graph distances can only become smaller. Thus, to prove the claim for some $\alpha\ge \tau - 2$, we can decrease $\alpha$ to some value $\alpha' := \tau-2-\eps$ without increasing distances, and then apply \Cref{thm:mainSFPlowerboundTheorem} for the already settled case $\alpha'$. This is the reason why we use the exponent $\Delta(\min\{\alpha, \tau - 2 - \varepsilon\})$. Using the above tail bound then directly implies a bound on typical graph distances.

\begin{restatable}[Typical Graph Distances in SFP]{corollary}{lowerbounddistancesinSFP}\label{cor:lowerboundSFP}
     Consider SFP under the assumption $\alpha \in (1,2)$ and $\tau > 3$. Then for every sufficiently small $\varepsilon > 0$, there is a constant $c > 0$ such that \begin{align*}
        \lim_{|x-y| \rightarrow \infty} \Pr{ \dg{x}{y} \ge c \log(|x-y|)^{ \Delta(\min\{\alpha, \tau - 2-\varepsilon \})} } = 1.
     \end{align*}
\end{restatable}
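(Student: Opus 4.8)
The plan is to obtain \Cref{cor:lowerboundSFP} as an immediate quantitative consequence of the tail bound in \Cref{thm:mainSFPlowerboundTheorem}, by plugging in $\hops \asymp \log(|x-y|)^{\Delta'}$ with a sufficiently small constant in front. Concretely, I would first fix a sufficiently small $\eps > 0$, set $\Delta' := \Delta(\min\{\alphaparam,\tauparam-2-\eps\})$ (which is a well-defined positive number, since $0 < \min\{\alphaparam,\tauparam-2-\eps\} < 2$ by the hypotheses $\alphaparam \in (1,2)$, $\tauparam > 3$, and $\eps$ small), and let $\ceconst, \Cconst, \pconst$ be the constants supplied by \Cref{thm:mainSFPlowerboundTheorem} for this $\eps$. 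Then I would pick the constant $c$ of the corollary to satisfy $0 < c < (\alphaparam d/\ceconst)^{\Delta'}$, i.e.\ small enough that $\ceconst\, c^{1/\Delta'} < \alphaparam d$.

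The main computation is then routine. For a pair $x \neq y$ set $\hops := \lfloor c\log(|x-y|)^{\Delta'}\rfloor$. Since $\hops \le c\log(|x-y|)^{\Delta'}$ and $t \mapsto t^{1/\Delta'}$ is increasing, this gives $\hops^{1/\Delta'} \le c^{1/\Delta'}\log|x-y|$, hence $\exp(\ceconst\,\hops^{1/\Delta'}) \le |x-y|^{\ceconst c^{1/\Delta'}}$. Substituting into \Cref{thm:mainSFPlowerboundTheorem} yields, for all $|x-y|$ large enough that $\hops \ge 1$,
\begin{align*}
\Pr{\dg{x}{y} \le \hops} \;\le\; \Cconst^{-1}\,|x-y|^{-(\alphaparam d - \ceconst c^{1/\Delta'})}\,(\hops+1)^{-\pconst}.
\end{align*}
By the choice of $c$ the exponent $\alphaparam d - \ceconst c^{1/\Delta'}$ is a fixed positive number, while $(\hops+1)^{-\pconst}$ grows at most like a fixed power of $\log|x-y|$; hence the right-hand side tends to $0$ as $|x-y| \to \infty$. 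Consequently $\Pr{\dg{x}{y} > \hops} \to 1$, and since $\dg{x}{y}$ is integer-valued while $\hops + 1 > c\log(|x-y|)^{\Delta'}$, the event $\{\dg{x}{y} > \hops\}$ is contained in $\{\dg{x}{y} \ge c\log(|x-y|)^{\Delta'}\}$, which establishes the claimed limit.

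I do not expect a genuine obstacle here: \Cref{thm:mainSFPlowerboundTheorem} carries all the work, and the corollary is a one-line reparametrization of it. The only point requiring care is the order of quantifiers — the constants $\ceconst,\Cconst,\pconst$ depend on $\eps$ (and the model parameters), so $c$ must be chosen \emph{after} them, which is precisely the order in which the corollary is stated (``for every sufficiently small $\eps$, there is a constant $c$''). A minor additional remark is that the polylogarithmic-in-$|x-y|$ factor $(\hops+1)^{-\pconst}$ is in any case dominated by the polynomial decay $|x-y|^{-(\alphaparam d - \ceconst c^{1/\Delta'})}$, so the sign of $\pconst$ is irrelevant to the conclusion.
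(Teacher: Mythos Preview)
Your proposal is correct and takes essentially the same approach as the paper, which simply states that the corollary follows directly from the tail bound in \Cref{thm:mainSFPlowerboundTheorem} without spelling out the computation. Your choice of $c$ so that $\ceconst\, c^{1/\Delta'} < \alphaparam d$ and the subsequent substitution are exactly the intended one-line derivation; the remark about the sign of $\pconst$ is a harmless over-caution, since in the paper $\pconst$ is in fact chosen positive (and large) in the proof of \Cref{lem:mainSFPlemma}.
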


Note that in the case $\alpha < \tau - 2$, the exponent is exactly $\Delta(\alpha)$ (if $\varepsilon$ is chosen sufficiently small), which is a slightly stronger result than the one we obtain if $\alpha \ge \tau - 2$ and matches the known upper bounds in \cite{Biskup_Lin_2019} up to only a constant factor in front of the $\log$. The previously best lower bounds by Hao and Heydenreich \cite{hao2023graph} only matched these upper bounds if $2\alpha < \tau - 2$ and nonetheless were only tight if we ignore an additional additive constant of $-\varepsilon$ in the exponent. If $\alpha \ge \tau - 2$, we also have to account for such an $\varepsilon$ in the exponent, but even in this case,
our result strengthens the lower bounds in \cite{hao2023graph} and at the same time relies on a significantly simpler proof. The original proof heavily relied on so-called \emph{hierarchies} as introduced in \cite{Biskup_2004} and required complex combinatorial estimates for showing that certain structures w.h.p.\ do not exist. We avoid this by using the inductive proof strategy as described above instead. This not only simplifies and improves the existing lower bounds on graph distances, but the tail bound in \Cref{thm:mainSFPlowerboundTheorem} further yields a so-called \emph{shape theorem} precisely characterizing the diameter and the cardinality of the \emph{$k$-neighbourhood} of a vertex $x$, i.e., the set of vertices in graph distance at most $k$ from $x$. To this end, we define $\mathcal{B}(x, k) \coloneqq \{ y \in V \mid \dg{x}{y} \le k \}$ as the set of all $k$-hop neighbors of a given vertex $x$. 

\newcommand{\nearvertices}{\mathcal{X}_{low}}
\newcommand{\farvertices}{\mathcal{X}_{upp}}

\newcommand{\shapeEqTitle}{A}
\newcommand{\sizeEqTitle}{B}
\begin{restatable}[Shape Theorem for $k$-Balls in SFP]{thm}{fullshapetheorem}\label{thm:fullshapetheorem}
    Consider SFP with $\alphaparam < \tauparam - 2$. Let $\Delta = \Delta(\alpha)$, fix an $\eps > 0$ and let $\nearvertices{}, \farvertices{}$ be the set of vertices at a geometric distance of at most $q(k) = e^{k^{1/\Delta - \eps}}$ and at most $r(k) = e^{k^{1/\Delta + \eps}}$ from a fixed vertex $x$, respectively. Then, we have
    \begin{align*}
      \tag{\shapeEqTitle{}}
      \lim_{k \to \infty} \Pr{\nearvertices{} \subseteq \mathcal{B}(x, k) \subseteq \farvertices{}} = 1.
    \end{align*}
    In particular, 
    \begin{align*}
      \tag{\sizeEqTitle{}}
      \lim_{k\to\infty} \Pr{e^{k^{1/\Delta - \eps}} \le |\mathcal{B}(x, k)| \le e^{k^{1/\Delta + \eps}}} = 1.
    \end{align*}
\end{restatable}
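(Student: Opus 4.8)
\medskip
\noindent\emph{Proof strategy.}
The plan is to derive part (A) from two inputs — the tail bound of \Cref{thm:mainSFPlowerboundTheorem}, which controls the outer inclusion $\mathcal{B}(x,k)\subseteq\farvertices$, and the known polylogarithmic \emph{upper} bound on graph distances in SFP from \cite{Biskup_2004,Biskup_Lin_2019}, which controls the inner inclusion $\nearvertices\subseteq\mathcal{B}(x,k)$ — and then to obtain part (B) from part (A) by a lattice-point count.

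For the outer inclusion, note that $\mathcal{B}(x,k)\not\subseteq\farvertices$ means there is a vertex $y$ with $|x-y|>r(k)=e^{k^{1/\Delta+\eps}}$ and $\dg{x}{y}\le k$. Since $\alpha<\tau-2$, I would first fix a small $\eps_0>0$ with $\min\{\alpha,\tau-2-\eps_0\}=\alpha$, so that the exponent $\Delta'$ of \Cref{thm:mainSFPlowerboundTheorem}, applied with this $\eps_0$, equals $\Delta(\alpha)=\Delta$. The theorem then provides constants $\ceconst,\Cconst,\pconst$ with $\Pr{\dg{x}{y}\le k}\le\Cconst^{-1}|x-y|^{-\alpha d}(k+1)^{-\pconst}e^{\ceconst k^{1/\Delta}}$ for every $y$, and a union bound over the offending vertices gives
\[
  \Pr{\mathcal{B}(x,k)\not\subseteq\farvertices}\ \le\ \Cconst^{-1}(k+1)^{-\pconst}e^{\ceconst k^{1/\Delta}}\sum_{z\in\ZZ^d:\,|z|>r(k)}|z|^{-\alpha d}.
\]
Because $\alpha>1$, the tail sum converges and is of order $r(k)^{-(\alpha-1)d}=e^{-(\alpha-1)d\,k^{1/\Delta+\eps}}$; since $1/\Delta+\eps>1/\Delta$, this factor overwhelms $e^{\ceconst k^{1/\Delta}}$ and the whole expression tends to $0$ as $k\to\infty$.

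For the inner inclusion I would invoke the upper bound of \cite{Biskup_2004,Biskup_Lin_2019} in its form uniform over Euclidean balls: for every $\delta>0$, with probability $1-o(1)$ any two vertices inside the ball of radius $R$ about $x$ are at graph distance at most $(\log R)^{\Delta+\delta}$. Taking $R=q(k)=e^{k^{1/\Delta-\eps}}$ yields the bound $k^{(1/\Delta-\eps)(\Delta+\delta)}$, and choosing $\delta$ small enough in terms of $\eps$ and $\Delta$ pushes the exponent strictly below $1$; hence this quantity is $o(k)$, so it is at most $k$ for all large $k$, giving $\nearvertices\subseteq\mathcal{B}(x,k)$ with high probability. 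Combined with the previous paragraph this proves (A). For (B), I would apply (A) with $\eps/2$ in place of $\eps$: with high probability $|\mathcal{B}(x,k)|$ is then sandwiched between the numbers of lattice points in $B(x,e^{k^{1/\Delta-\eps/2}})$ and in $B(x,e^{k^{1/\Delta+\eps/2}})$, which are $\Theta(e^{d k^{1/\Delta-\eps/2}})$ and $\Theta(e^{d k^{1/\Delta+\eps/2}})$; since $d\,k^{1/\Delta-\eps/2}\ge k^{1/\Delta-\eps}$ and $d\,k^{1/\Delta+\eps/2}\le k^{1/\Delta+\eps}$ for all large $k$, this yields $e^{k^{1/\Delta-\eps}}\le|\mathcal{B}(x,k)|\le e^{k^{1/\Delta+\eps}}$.

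The main obstacle I anticipate is the inner inclusion: the cited polylogarithmic upper bound must be available in a form uniform over the exponentially many vertices of $\nearvertices$ — either the ball-diameter statement used above, or a tail bound $\Pr{\dg{x}{y}>(\log|x-y|)^{\Delta+\delta}}$ decaying in $|x-y|$ fast enough to survive a union bound. (Transferring this from LRP to SFP is harmless: increasing $\tau$ only increases distances, so the LRP upper bound carries over by a coupling.) The outer inclusion, in contrast, is essentially immediate from \Cref{thm:mainSFPlowerboundTheorem}, and the passage from (A) to (B) is a routine counting argument.
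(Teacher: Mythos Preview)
Your proposal is correct and follows essentially the same route as the paper: the outer inclusion via a union bound over $|z|>r(k)$ using the tail bound of \Cref{thm:mainSFPlowerboundTheorem}, the inner inclusion by importing the LRP polylogarithmic upper bound (the paper cites Theorem~1.2 of \cite{Biskup_2011} specifically, which already delivers the ball-uniform statement you flag as the main obstacle, so that concern is resolved by citation), and (B) from (A) by counting lattice points and absorbing the factor $d$ into the $\eps$.
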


The lower bound of the theorem comes from~\cite{Biskup_2011}, while we contribute the upper bound. Note that lower bounds on graph distances correspond to upper bounds for $\mathcal{B}(x, k)$ and vice versa. 

We further note that while our upper bound in Theorem~\ref{thm:fullshapetheorem} also holds for GIRG and HRG, the lower bound is not known for these models and can only hold with some caveats. Firstly, due to the lacking grid edges those graphs are not connected, and require additional constraints to ensure the existence of a giant (linear-size) connected component. Second, even if the giant component exists, a constant fraction of vertices are not in the largest component. Hence, the lower bounds in (\shapeEqTitle{}) and (\sizeEqTitle{}) can only hold conditioned on $x$ being in the largest component, and for (\shapeEqTitle{}) we must intersect $\nearvertices$ with the giant component. We conjecture that the lower bounds in Theorem~\ref{thm:fullshapetheorem} hold with these caveats, but this is not known.

\subsection{First passage percolation on SFP}

Using similar techniques and inspired from those in \cite{completeFPP}, we can prove similar statements for FPP on SFP. Analogous to \Cref{thm:mainSFPlowerboundTheorem}, we obtain a tail bound on the probability that $x,y$ have cost-distance at most $t$. We remark that the same result could be obtained analogously for GIRG (and thus HRG) as well, but we omit this for conciseness. 

\begin{restatable}[Tail Bound for Cost-Distances for FPP on SFP]{thm}{lowerboundSFPsquared}\label{thm:lowerboundSFPsquared}
    Consider \SFFPP{} on SFP and arbitrary vertices $x, y$. Fix any sufficiently small $\eps > 0$. There exists a constant $c$ depending only on $\alphaparam$, $\eps$ and $\tauparam$ such that for $\Delta'' = \Delta(\min\{\alphaparam, \frac{\tauparam - 1}{2}\} - \eps)$,
    \begin{align*}
        \Pr{\graphdistcost{x}{y} \le t} \le |x - y|^{-\alphaparam d} \exp\left(ct^{1/\Delta''}\right).
    \end{align*}
\end{restatable}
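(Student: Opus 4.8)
The plan is to mirror the inductive argument used for \Cref{thm:mainSFPlowerboundTheorem} but adapted to the cost-distance setting. The natural analogue of ``a path of length at most $k$'' is ``a path of cost at most $t$''. As in the graph-distance proof, I would decompose any cheap path $\pi$ from $x$ to $y$ at the edge $e=\{u,v\}$ of \emph{largest geometric length} along $\pi$. Writing $t_1$ and $t_2$ for the costs of the two sub-paths (from $x$ to $u$ and from $v$ to $y$) and $c_e$ for the cost of the bridging edge, we have $t_1+t_2+c_e\le t$. Conditioning on the weights $w_u,w_v$ and the location of $u,v$, the probability that the edge $e$ is present \emph{and} has cost $c_e$ in a small range is governed by $\pSFP$ together with the density of an Exp$(1)$ random variable; since $\Pr{c_e\le s}\le s$, the cost variable contributes an extra integrable factor of $s$, which effectively plays the role of the ``$(k+1)^{-\beta}$'' polynomial correction in the graph-distance bound. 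The inductive hypothesis is then applied to each sub-path with its own cost budget, and one integrates over the two sub-costs $t_1,t_2$ with $t_1+t_2\le t$, over the weights $w_u,w_v$, and over the position of the far endpoint.

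The key technical point is the exponent. In the graph-distance proof the convergence of the weight integrals required $\alpha<\tau-2$; here the threshold shifts to $\alpha<(\tau-1)/2$. The reason is that in FPP every edge is ``usable'' (all long-range edges are effectively available, gated only by their random cost rather than by a Bernoulli with a tiny success probability), so an edge incident to a heavy vertex $u$ of weight $w_u$ can be traversed for free up to an Exp$(1)$ cost regardless of how short or long the path is. This means the heavy-vertex contribution scales differently, and the relevant integral $\int w^{\alpha}\cdot w^{-\tau}\,dw$-type term, after accounting for the two endpoints $u,v$ of the longest edge each carrying a weight factor, converges exactly when $2\alpha<\tau-1$, i.e.\ $\alpha<(\tau-1)/2$. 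So I would set $\alpha'=\min\{\alpha,(\tau-1)/2-\eps\}$ and prove the bound with $\Delta''=\Delta(\alpha')$ directly in the regime $\alpha<(\tau-1)/2$, then reduce the general case to this one via the same monotonicity/coupling device as \Cref{lem:changealpha}: decreasing $\alpha$ only adds edges (stochastically), hence only decreases cost-distances, so the tail bound for the smaller $\alpha'$ implies it for the original $\alpha$.

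Concretely the induction would be on a discretized version of $t$ (say, on $\lceil t\rceil$ or on dyadic scales of $t$), with base case $t$ below a small constant where the claimed bound is trivial because $\Pr{\graphdistcost{x}{y}\le t}\le\Pr{x\sim y}+\Pr{\exists\text{ two-edge path}}+\dots$ and each term is at most a constant times $|x-y|^{-\alpha d}$. For the inductive step, fixing the longest edge $\{u,v\}$ with geometric length $\ell\ge|x-y|/(t\cdot\text{(max path length)})$—or more robustly $\ell\gtrsim|x-y|/\text{poly}$, using that a path of cost $\le t$ has at most, say, $O(t^2)$ edges with overwhelming probability since edge costs are Exp$(1)$—ensures both sub-paths have geometric span controllable relative to $|x-y|$, so that the product of the two inductive bounds reproduces the $|x-y|^{-\alpha d}$ factor. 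The exponential factor recombines via the elementary inequality $a^{1/\Delta''}+b^{1/\Delta''}\le(a+b)^{1/\Delta''}$ when $1/\Delta''\ge 1$ (equivalently $\alpha'\le 1$... more precisely one uses the defining functional equation of $\Delta$: $2\cdot(t/2)^{1/\Delta''}=(2/\alpha')^{... }$—the same algebra that makes the graph-distance induction close), and the polynomial/cost integrals supply the slack.

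The main obstacle I anticipate is the same one the authors flag for the graph-distance case: controlling the ``longest-edge'' decomposition when the near-endpoint geometry is bad, and making sure the integral over the two sub-budgets $t_1+t_2\le t$ together with the two weight integrals and the spatial integral all converge with room to spare, so that constants can be absorbed into $c$. In the FPP setting there is the extra wrinkle that the number of edges on a cheap path is itself random and unbounded, so one must either truncate at $O(t^{1+o(1)})$ edges (paying a negligible $e^{-\Omega(t)}$ error for longer paths) or carry the path-length as an auxiliary parameter; reconciling this with the clean recursive bound, and verifying that the cost density $e^{-s}\le 1$ really does give enough polynomial decay to mimic the $(k+1)^{-\beta}$ term, is where the real work lies. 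Everything else is a faithful transcription of the proof of \Cref{thm:mainSFPlowerboundTheorem} with $\tau-2$ replaced by $(\tau-1)/2$.
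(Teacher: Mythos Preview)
Your high-level ingredients---decompose a cheap path at its geometrically longest edge, truncate paths with too many edges, and use the \Cref{lem:changealpha} coupling to push $\alpha$ below the convergence threshold---are exactly the ones the paper uses. But the execution you sketch, a discrete induction on $\lceil t\rceil$ or on dyadic scales of $t$, is \emph{not} how the paper proceeds, and there is a real obstacle: in the continuous split $t_1+t_2+c_e\le t$ neither sub-budget is bounded away from $t$, so strong induction does not terminate (and your recombination inequality ``$a^{1/\Delta''}+b^{1/\Delta''}\le (a+b)^{1/\Delta''}$ when $1/\Delta''\ge 1$'' is stated backwards, since $\Delta''>1$). The paper sidesteps this entirely. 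It first couples FPP on SFP to \emph{Complete} SFP-FPP (\CFFPP{}), where every edge of the complete graph on $\ZZ^d$ carries an exponential cost with rate $w_u^\alpha w_v^\alpha|u-v|^{-\alpha d}$; this merges the two layers of randomness. It then proves an a~priori exponential bound $g(t):=\Expected{|\costballll{0}{t}|}\le e^{Ct}$ (via a direct path-counting estimate, \Cref{lem:passagetimeprobbound}), and derives a \emph{continuous self-bounding inequality}
\[
g(t)^\alpha \le c\Big(t^{\alpha d}\int_0^t g(s)\,g(t-s)\,ds + 1\Big),
\]
which is the genuine analogue of the inductive step. This functional inequality is then solved by quoting \cite[Theorem~5.3]{completeFPP}, yielding $g(t)\le \exp\big(t^{1/\Delta''+o(1)}\big)$, and the pointwise tail bound $|x-y|^{-\alpha d}\exp(ct^{1/\Delta''})$ is read off from the intermediate estimate on $f(r,t)$ (\Cref{lem:fkt}).

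Two further points your plan misses. First, the threshold $2\alpha<\tau-1$ arises not from the longest-edge step per se but because in \CFFPP{} each internal vertex weight $w_i$ enters \emph{two} adjacent edge rates, producing $w_i^{2\alpha}$ in \Cref{lem:passagetimeprobbound}; your heuristic about ``every edge is usable'' does not pinpoint this. Second, when conditioning on the weight $w_{u}$ of the hub of the longest edge, the sub-path probability $\Pr{\graphdistcost{0}{u}\le s\mid w_u}$ genuinely depends on $w_u$ through \emph{all} edges at $u$, not just the bridging one; the paper handles this with a separate coupling (\Cref{prop:condweightrecpath}) showing $\Pr{\cdot\mid w_u}\le 2w_u^\alpha\,\Pr{\cdot}$, which is what actually produces the $w_u^{2\alpha}$ factor in the short-path case as well. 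Without these pieces your inductive hypothesis does not close.
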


The proof of this differs from \Cref{thm:mainSFPlowerboundTheorem} in some significant aspects which are formally presented in \Cref{sec:FPP}. Intuitively, the main differences are as follows. Firstly, we now have two sources of randomness: the existence of edges and the cost of an existing edge. The first step towards proving \Cref{thm:lowerboundSFPsquared} is therefore to combine these two sources into a single one. This is achieved by coupling the model to a related model called \emph{Complete Scale Free First Passage Percolation} or \CFFPP{} for short. Here, all possible edges on the vertex set $\ZZ^d$ exist a priori but the cost of the edge between $x,y \in \ZZ^d$ is now drawn from an exponential distribution with rate $w_x^\alpha w_y^\alpha |x - y|^{-\alpha d}$ instead of rate $1$, i.e., in \CFFPP{} the rate of an edge depends on the vertex weights and (geometric) distances of its endpoints.

The second main difference to the proof of \Cref{thm:mainSFPlowerboundTheorem} is that cost-distances are continuous random variables, so we cannot union-bound over all possible cost-distances before and after the longest edge of a potential path anymore like we did for SFP (in \Cref{lem:mainSFPlemma}). Instead, we establish a continuous analog, a so called \emph{self-bounding inequality} that relates the expected size of a $k$-ball to itself recursively.
Another difficulty one has to overcome is that, in principle, paths of low cost-distance do not necessarily have to correspond to low graph distance as well. It could theoretically happen that many edges have very low cost and we get a low cost path which uses many edges. In such cases, we cannot use the existence of a geometrically long edge in the path, which is very central to our proof for graph distances. However, we are able to show that paths with high graph distance are actually very unlikely to have low cost-distance (see \Cref{lem:passagetimeprobbound}). Finally, a further obstacle in adapting the proof is that we have to work with the probability that a path of a certain cost exists conditioned on the weights of its endpoints at multiple points. This impacts the probabilities of edges/paths existing and thus introduces complications. To overcome this, we relate said probabilities conditional on the involved weights to their unconditional versions by employing a coupling (\Cref{prop:condweightrecpath}).

As a corollary of the above tail bound, we obtain lower bounds on the typical cost-distance similar to the one established for SFP (\Cref{cor:lowerboundSFP}). 

\begin{restatable}[Typical Graph Distances in \SFFPP{} on SFP]{corollary}{lowerbounddistancesinFPP}\label{cor:lowerboundFPP}
     Consider \SFFPP on SFP under the assumption $\alpha \in (1,2)$ and $\tau > 3$. Then, for every sufficiently small $\varepsilon > 0$ \begin{align*}
        \lim_{|x-y| \rightarrow \infty} \Pr{ \graphdistcost{x}{y} \ge \log(|x-y|)^{ \Delta(\min\{\alpha, \frac{\tau - 1}{2}\} -\varepsilon)}} = 1.
     \end{align*}
\end{restatable}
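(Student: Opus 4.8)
The plan is to read \Cref{cor:lowerboundFPP} off the tail bound of \Cref{thm:lowerboundSFPsquared} by substituting a suitable threshold $t$, the only delicate point being to keep a polynomial slack that absorbs the unspecified constant $c$.

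First I would set $\beta := \min\{\alphaparam, \tfrac{\tauparam-1}{2}\}$ and observe that, since $\alphaparam \in (1,2)$ and $\tauparam > 3$, we have $\beta \in (1,2)$; consequently $\Delta(\gamma) = 1/\log_2(2/\gamma)$ is finite, positive, continuous and \emph{strictly increasing} for $\gamma$ near $\beta$, because $\gamma \mapsto \log_2(2/\gamma)$ is positive and strictly decreasing there. Given the $\eps > 0$ from the statement, taken small enough that $\beta - \eps \in (1,2)$, I would fix an auxiliary parameter $\eps' \in (0,\eps)$ that is still small enough for \Cref{thm:lowerboundSFPsquared} to be applicable, and let $c = c(\eps')$ be the constant it provides. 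Writing $\Delta'' = \Delta(\beta - \eps')$, the theorem then gives, for every pair $x,y$ and every $t \ge 0$,
\[
  \Pr{\graphdistcost{x}{y} \le t} \ \le\ |x-y|^{-\alphaparam d}\exp\!\left(c\, t^{1/\Delta''}\right).
\]

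Next I would apply this with $t = t(|x-y|) := \log(|x-y|)^{\Delta(\beta-\eps)}$. Since $\beta - \eps < \beta - \eps'$ and $\Delta$ is strictly increasing, the exponent ratio $\rho := \Delta(\beta-\eps)/\Delta''$ is strictly less than $1$, so $t^{1/\Delta''} = \log(|x-y|)^{\rho} = o(\log|x-y|)$ and hence $\exp(c\, t^{1/\Delta''}) = |x-y|^{o(1)}$. The displayed bound then becomes $\Pr{\graphdistcost{x}{y} \le t} \le |x-y|^{-\alphaparam d + o(1)} \to 0$ as $|x-y|\to\infty$, and therefore
\[
  \Pr{\graphdistcost{x}{y} \ge t} \ \ge\ 1 - \Pr{\graphdistcost{x}{y} \le t} \ \longrightarrow\ 1,
\]
which is exactly the assertion (and the probability is trivially $\le 1$, so the limit equals $1$).

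I do not expect a genuine obstacle; the entire content is the choice $\eps' < \eps$. If one instead took $t = \log(|x-y|)^{\Delta''}$, the factor $\exp(c\, t^{1/\Delta''})$ would only be $|x-y|^{c}$, which need not be dominated by the prefactor $|x-y|^{-\alphaparam d}$. Using the strictly smaller exponent $\Delta(\beta-\eps)$ — which is legitimate precisely because $\Delta$ is strictly increasing in its argument — shrinks that factor to $|x-y|^{o(1)}$, and this monotonicity gap is also what makes the statement hold for every sufficiently small $\eps$ no matter how large $c(\eps')$ turns out to be.
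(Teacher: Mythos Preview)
Your proposal is correct and is precisely the argument the paper intends: the corollary is stated without proof as a direct consequence of the tail bound in \Cref{thm:lowerboundSFPsquared}, and your computation is the natural way to make that implication explicit. The auxiliary $\eps' < \eps$ trick you introduce to absorb the unspecified constant $c$ is exactly right and is in the same spirit as the remark following \Cref{lem:lowerboundSFFPP}, where the paper uses an analogous auxiliary $\eps'$ to pass from that lemma to \Cref{thm:lowerboundSFPsquared}.
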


\paragraph{\textbf{Asymptotics and Probability Theory}}

We use standard Landau notation for indicating the asymptotic growth of a function. All asymptotic statements refer to the asymptotic behavior of a function as the distance $|x - y|$ tends to infinity, unless explicitly noted otherwise (like for the shape theorem, where we consider $k\to\infty$.) We further require a version of the Van den Berg-Kesten inequality (or BK-inequality) from \cite{Reimer_2000}, which gives us an upper bound on the probability of the so-called \emph{disjoint occurrence} of two events in a product probability space. For our purposes, this inequality allows us to easily handle probabilities of existence of \emph{disjoint} subpaths connecting a vertex $x$ to $u$ and a vertex $v$ to $y$, where $u, v$ are connected by an edge. In particular, we will be able to bound the probability of two such paths disjointly coexisting by the product of the probabilities of either path existing, as if they were independent (this is where the disjointness really plays a role). This disjointess is due to the fact that we are looking at shortest paths. We defer the technical details of this inequality to \Cref{sec:BK_inequality_and_proofs}.

\section{Lower Bounds for Graph Distances in SFP}\label{sec:SFP}
In this section, we provide the proof of our main lower bound. Our proof generally follows the structure of the proof of Theorem 3.1 in \cite{Biskup_2011}. Our goal is to show that the logarithmic exponent in the distances is at least roughly $\Delta(\min\{\alpha, \tauparam - 2\})$. When $\alphaparam < \tau - 2$, this is the same as $\bigDelta = \Delta(\alphaparam)$. We will first give the proof under this condition, so we first show Lemma~\ref{lem:mainSFPlemma}.

\begin{restatable}[Tail Bound for Graph Distances in SFP]{lemma}{mainSFPlemma}\label{lem:mainSFPlemma}
Consider SFP with $\alpha \in (1,2)$ such that $\alpha < \tau - 2$ and $\Delta = \Delta(\alphaparam)$. There exist constants $\ceconst, \Cconst, \pconst$ depending only on the model parameters, such that for any pair $x, y \in \mathbb{Z}^d$ and any $\hops \in \mathbb{N}$,
\begin{equation}\label{eq:WIH}
\Pr{\graphdist{x}{y} \le \hops \mid w_x, w_y} \le \IH{x}{y}{\hops}.
\end{equation}
\end{restatable}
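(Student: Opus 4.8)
The plan is to prove \eqref{eq:WIH} by strong induction on $\hops$. First I would fix the constants in a convenient order: choose $\pconst$ large (to beat certain sums over path lengths), then choose $\Cconst$ large enough relative to $\lambda$ and to the dimensional constants coming from the lattice sums, and finally choose $\ceconst$ large enough that the exponential factor $e^{\ceconst \hops^{1/\Delta}}$ can absorb the loss incurred at each induction step. The base cases $\hops = 0$ and $\hops = 1$ are direct: for $\hops=0$ the event $\graphdist{x}{y}\le 0$ means $x=y$, and for $\hops = 1$ we have $\Pr{x\sim y \mid w_x,w_y} \le \lambda (w_xw_y)^\alpha |x-y|^{-\alpha d}$ by \eqref{eq:SFP}, both of which fit inside $\IH{x}{y}{\hops}$ after choosing $\Cconst,\ceconst$ appropriately (the grid edges contribute only when $|x-y|=1$, which is harmless).

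For the inductive step, suppose \eqref{eq:WIH} holds for all path-length budgets smaller than $\hops$. Given a path $\pi$ from $x$ to $y$ of length $\le \hops$, let $\{u,v\}$ be the edge of $\pi$ maximizing the Euclidean length $\geomdist{u}{v}$ among its edges, with $u$ on the $x$-side. If $\pi$ has $j$ edges before $\{u,v\}$ and $\hops - 1 - j' $ after (with $j + j' \le \hops - 1$), then there is a path of length $\le j$ from $x$ to $u$ and a path of length $\le j'$ from $v$ to $y$, and crucially these two subpaths are vertex-disjoint from each other (being sub-paths of a shortest path), so the BK-inequality applies. I would union-bound over the choice of $u, v \in \ZZ^d$ and over $j, j' \in \{0,\dots,\hops-1\}$ with $j + j' \le \hops - 1$. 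Conditioning on $w_u, w_v$ (and recalling weights are independent), the BK-inequality bounds the probability of the disjoint occurrence of $\{x \leftrightarrow u \text{ in } \le j\}$, $\{u \sim v\}$, and $\{v \leftrightarrow y \text{ in } \le j'\}$ by the product
\[
\Pr{\graphdist{x}{u}\le j \mid w_x, w_u}\cdot \lambda (w_uw_v)^\alpha \geomdist{u}{v}^{-\alpha d} \cdot \Pr{\graphdist{v}{y}\le j' \mid w_v, w_y},
\]
to each of the two distance factors of which I apply the inductive hypothesis, i.e. $h(x,u,j)$ and $h(v,y,j')$. Then I take expectation over $w_u, w_v$ (each a Pareto$(\tau)$ variable), which produces finite moments $\Expected{w^{3\alpha}}<\infty$ precisely because $3\alpha < \tau - 1$ is equivalent to $\alpha < (\tau-1)/3$ — wait, here the relevant bound is $\alpha < \tau - 2$, which I should double-check controls the weight integral $\Expected{w_u^{2\alpha}}$ and the subsequent lattice sum jointly; this is exactly the "convergence of some involved integrals" flagged in the text.

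The remaining work is to show the resulting bound collapses back into $\IH{x}{y}{\hops}$. Summing the lattice factor $\geomdist{x}{u}^{-\alpha d}\geomdist{u}{v}^{-\alpha d}\geomdist{v}{y}^{-\alpha d}$ over $u,v$ — using that $\geomdist{u}{v}$ is the \emph{longest} edge, which forces $\geomdist{x}{y} \le \hops \cdot \geomdist{u}{v}$ and hence gives extra room — should reproduce a single factor $\Cconst^{-1}\geomdist{x}{y}^{-\alpha d}$ together with a polynomial-in-$\hops$ overhead that the $(\hops+1)^{-\pconst}$ slot and the choice of $\pconst$ can swallow. For the exponential factors, I use superadditivity/convexity of $t \mapsto t^{1/\Delta}$: since $\Delta = \Delta(\alpha) = 1/\log_2(2/\alpha)$ satisfies $2^{1/\Delta} = 2/\alpha$, one gets $e^{\ceconst j^{1/\Delta} + \ceconst (j')^{1/\Delta}} \le e^{\ceconst \hops^{1/\Delta}}\cdot(\text{small loss})$ when $j + j' \le \hops - 1$; this is the mechanism by which the exponent $\Delta$ enters, and making it work requires the edge weight/distance factor $\lambda(\cdots)^\alpha$ to be quantitatively compatible with halving the budget — this is the heart of Biskup-type arguments and the step I expect to be most delicate. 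Finally I would sum over $j, j'$ (at most $\hops^2$ terms, absorbed into $(\hops+1)^{-\pconst}$ with $\pconst$ large) and over the choice of where the longest edge sits, concluding \eqref{eq:WIH}.

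The main obstacle, I expect, is calibrating the three constants simultaneously so that the per-step losses — the lattice-sum constant, the weight moments, the $\hops$-polynomial overhead from summing over $j,j'$, and the sub-optimality in the superadditivity estimate for $t^{1/\Delta}$ — all fit inside the fixed form $\Cconst^{-1}\geomdist{x}{y}^{-\alpha d}(\hops+1)^{-\pconst}e^{\ceconst \hops^{1/\Delta}}$ uniformly in $\hops$. In particular one must verify that the exponential gain from $\ceconst \hops^{1/\Delta}$ genuinely dominates the polynomial-in-$\hops$ losses for all $\hops \ge 1$, not just asymptotically, which may require treating small $\hops$ by hand and choosing $\ceconst$ last and large enough.
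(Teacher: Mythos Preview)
Your overall inductive scheme (longest edge, BK inequality, union bound over $u,v$ and the split $j,j'$) matches the paper's, but the step where you combine the exponential factors has a genuine error. Since $\alpha \in (1,2)$ we have $\Delta = 1/\log_2(2/\alpha) > 1$, so $1/\Delta \in (0,1)$ and $t \mapsto t^{1/\Delta}$ is \emph{concave}, not convex. Concave functions vanishing at $0$ are subadditive, so $j^{1/\Delta} + (j')^{1/\Delta} \ge (j+j')^{1/\Delta}$; in particular at $j = j' = k/2$ the left side equals $2(k/2)^{1/\Delta} = \alpha\, k^{1/\Delta} > k^{1/\Delta}$. Hence your claimed inequality $e^{\ceconst j^{1/\Delta} + \ceconst (j')^{1/\Delta}} \le e^{\ceconst k^{1/\Delta}}\cdot(\text{small loss})$ is false in the worst case, and the induction as you have written it does not close.

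The idea you are missing is to cap the inductive bound at $1$ (it bounds a probability) \emph{before} summing over $u$ and $v$. The paper sums $\min\{1,\, h(|x-u|, j, w_x, w_u)\}$ over $u$; the resulting integral $\int r^{d-1}\min\{1, C r^{-\alpha d}\}\,\d r$ is dominated by the threshold $\hat r$ where the minimum switches and equals $\Theta(\hat r^{\,d}) = \Theta(C^{1/\alpha})$. This replaces your factor $e^{\ceconst j^{1/\Delta}}$ by $e^{(\ceconst/\alpha)\, j^{1/\Delta}}$. Now concavity works \emph{for} you: the product over the two sides has exponent $\tfrac{\ceconst}{\alpha}\big(j^{1/\Delta} + (k-j)^{1/\Delta}\big) \le \tfrac{\ceconst}{\alpha}\cdot \alpha\, k^{1/\Delta} = \ceconst k^{1/\Delta}$, with equality exactly at $j=k/2$ (this is precisely where the identity $2^{1/\Delta}=2/\alpha$ enters). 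The paper then splits the sum over $j$ into $|j-k/2|\le k/4$ (where the polynomial factors $(j+1)^{-\pconst/\alpha}(k-j+1)^{-\pconst/\alpha}$ beat the overhead) and $|j-k/2|> k/4$ (where the exponential has strict slack). The same cap also fixes your weight-moment worry: since $\hat r^{\,d} \propto w_x w_u$, after multiplying by the edge factor $w_u^\alpha$ one only needs $\Expected{w_u^{\alpha+1}} < \infty$, i.e.\ $\alpha < \tau-2$, whereas your uncapped version would need $\Expected{w_u^{2\alpha}} < \infty$, i.e.\ $\alpha < (\tau-1)/2$, which fails for part of the range the lemma is supposed to cover.
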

\begin{proof}
    First of all, note that for fixed $\pconst, \Cconst$, and $\percparam$, the base case ($\hops = 1$) is true for $\ceconst$ large enough. That is because the RHS of \ref{eq:WIH} is $2^{-\pconst}e^{\ceconst}w_x^{\alphaparam}w_y^{\alphaparam}\Cconst^{-1}\geomdist{x}{y}^{-\alphaparam d}$ and the actual connection probability is at most $\percparam w_x^{\alphaparam}w_y^{\alphaparam} \geomdist{x}{y}^{-\alphaparam d}$. For the inductive step, let $h$ be our induction hypothesis, i.e., \begin{align*}
        \IHabbr{r}{\hops}{w_x}{w_y} \coloneqq w_x^{\alphaparam}w_y^{\alphaparam}\Cconst^{-1} r^{-\alphaparam d} (k + 1)^{-\pconst}e^{\ceconst k^{1/\bigDelta}}.
    \end{align*} 
    
    Assume that the induction hypothesis is true up to $\hops-1$. For $x,y$ to be connected with at most $\hops$ steps, an edge must be used with geometric distance at least $\frac{\geomdist{x}{y}}{\hops}$. This could either be the first or last edge on the path, or a so-called \emph{internal} edge. Let us first bound the probability corresponding to this edge being internal. For this, we union bound over all possible endpoints $u$ and $v$ of said edge. Actually, we integrate, since constant factors are essentially immaterial for the proof. At a given distance $r$, there are at most $c_d r^{d-1}$ vertices, for some constant $c_d$. Let $w_u, w_v$ be the weights of $u, v$, respectively. By the BK inequality (Theorem~\ref{thm:BK}), we have 
    \begin{align*}
      &\Pr{ d_G(x,y) \le k, \text{longest edge is internal } \mid w_u, w_v, w_x, w_y} \le\\
      &\hspace{.0cm}\sum_{i = 1}^{\hops - 2} \Pr{\graphdist{x}{u} \le i \mid w_x, w_u} \Pr{u \sim v \mid w_u,w_v} \Pr{\graphdist{v}{y} \le \hops - i \mid w_v, w_y} \le \lambda w_u^\alphaparam w_v^\alphaparam \left( \frac{\geomdist{x}{y}}{\hops} \right)^{-\alphaparam d} \\
      & \hspace{.5cm} \times \sum_{i = 1}^{\hops - 2} \underbrace{\left( \int_{1}^{\infty} c_d {r_u}^{d-1} \min\left\{1, \IHabbr{r_u}{i}{w_x}{w_u}\right\} \d r_u \right)}_{\I{u}{i}}  \underbrace{\left(\int_{1}^{\infty} c_d {r_v}^{d-1} \min\left\{1, \IHabbr{r_v}{k - i}{w_v}{w_y}\right\} \d r_v \right)}_{\I{v}{k - i}}.
    \end{align*}
    We will now argue that there exists a constant $\intconst{}$ (depending on the model parameters) such that
    \begin{align*}
        \I{u}{i} &\le \intconst{} \left( (i + 1)^{-\frac{\beta}{\alpha}} e^{\frac{c_1}{\alpha} i^{1/\Delta}} w_u w_x c_2^{-\frac{1}{\alpha}} \right),\\
        \I{v}{k-i} &\le \intconst{} \left( (k - i + 1)^{-\frac{\beta}{\alpha}} e^{\frac{c_1}{\alpha} (k - i )^{1/\Delta}} w_v w_y c_2^{-\frac{1}{\alpha}} \right).
    \end{align*}
    To this end, note that that there exists a value \[\hat r_u = (w_xw_u)^{\frac{1}{d}}c_2^{-\frac{1}{\alpha d}} (i+1)^{-\frac{\beta}{\alpha d}} e^{\frac{c_1}{\alpha d}(i+1)^{1/\Delta}}\] 
    for $r_u$ below which the minimum inside the integral $\I{u}{i}$ is $1$. We can thus express $\I{u}{i} = \int_1^{\hat r_u} f_1(r_u)dr_u + \int_{\hat r_u}^\infty f_2(r_u)dr_u$ where $f_1(r) = c_dr_u^{d-1}$ and $f_2$ is a polynomial in $r_u$ with exponent smaller than $-1$. Therefore, the entire integral is dominated by the value of the antiderivative of $f_1$ and $f_2$ at the splitting point $\hat r_u$. Since $f_1, f_2$ are polynomials, the antiderivative of $f_1$ is $ \le cr_uf_1(r_u)$ and the antiderivative of  $f_2$ is $\le cr_u f_2(r_u)$ for some constant $c$. Since the minimum is a continuous function, we have $f_1(\hat{r_u}) = f_2(\hat{r_u})$ and thus, $\I{u}{i} = \Theta(\hat{r} f_1(\hat{r_u})) = \Theta(\hat{r_u}^d)$ as claimed. A similar argument holds for $\I{v}{k - i}$. \footnote{This observation is helpful whenever we integrate a continuous and piecewise polynomial function.} 

    Plugging this in, we obtain \begin{align*}
        &\Pr{ d_G(x,y) \le k, \text{longest edge is internal} \mid w_u, w_v, w_x, w_y} \\
      & \hspace{0.5cm} \le \intconst{}^2 \lambda \cdot w_u^{1 + \alpha} w_v^{1 + \alpha} w_x^\alpha w_y^\alpha |x-y|^{-\alphaparam d} \Cconst^{-\frac{2}{\alphaparam}} \underbrace{k^{\alphaparam d} \sum_{i = 1}^{\hops - 2} \left((i + 1)^{-\frac{\pconst}{\alphaparam}}e^{\frac{\ceconst}{\alphaparam} {i}^{1/\bigDelta}}\right) \left((\hops - i + 1)^{-\frac{\pconst}{\alphaparam}}e^{\frac{\ceconst}{\alphaparam} (\hops - i)^{1/\bigDelta}}\right)}_{\coloneqq S}.
    \end{align*}

    Our goal now is to show that the above term is at most $\IHabbr{r}{\hops}{w_x}{w_y}$. To this end, we show that \begin{align}\label{eq:S2}
        S \le (k+1)^{-\beta} e^{\ceconst{} k^{1/\Delta}}
    \end{align} for $k$ and $\beta$ large enough. For this, notice that for small or large $i$, the exponential terms in $S$ are still quite ``tame'', due to the $\tfrac{1}{\alphaparam}$ factor. When $i$ is around $\tfrac{\hops}{2}$, their product (which is maximized for such $i$ due to concavity) is practically \begin{align*}
        \exp \left(\frac{2c_1}{\alpha} \left( \frac{k}{2} \right)^{1/\Delta} \right) = \exp \left( \frac{2c_1}{\alpha} 2^{-1/\Delta} k^{1/\Delta}  \right) = \exp \left(  c_1 k^{1/\Delta} \right)
    \end{align*}
    as $2^{-1/{\bigDelta}} = \alphaparam/2$ by definition of $\Delta$. In fact, for $i = \tfrac{\hops}{2}$, we have actual equality and for every $1 \le i \le k$, we have \begin{align*}
        \exp \left(\frac{c_1}{\alpha} i^{1/\Delta}\right) \exp \left( \frac{c_1}{\alpha} (k-i)^{1/\Delta} \right) \le \exp \left( c_1 k^{1/\Delta} \right).
    \end{align*} This is precisely the exponential term that appears in the statement we want to prove (\ref{eq:S2}). However, we also need to account for the sum and the terms polynomial in $k$ that appear in $S$. To this end, we use that -- if $i \approx k/2$ -- we gain from the product of the polynomial terms in the sum to compensate overheads. On the other hand, if $i$ is large or small, the product of the exponential terms is much smaller than what we need, so we can compensate the other terms by using the arising gap. With this in mind, we split the sum in $S$ into the cases where $|i - \tfrac{\hops}{2}| \le \tfrac{\hops}{4}$ and those where this is not true. This way, we obtain
    \begin{align}\label{eq:S}
        S \le \hops^{1 + \alpha d} e^{(1 - \gamma)\ceconst \hops^{1/\bigDelta}} + \hops^{1 + \alpha d} \left((\hops + 1)/8\right)^{-\frac{2}{\alphaparam}\pconst}e^{\ceconst \hops^{1/\bigDelta}},
    \end{align}
    where $\gamma > 0$ is a constant depending on $\bigDelta$ (and therefore on $\alphaparam$). The first term accounts for cases where $i$ is sufficiently far from $\frac{\hops}{2}$, making the exponential terms merge in a tame way. When $i \in \left[\frac{\hops}{4}, \frac{3\hops}{4}\right]$, both $i + 1$ and $k - i + 1$ are at least $\frac{\hops + 1}{8}$, since $\hops > 2$ (recall that we are analyzing the case where $\hops$ edges allow for an internal edge), and this is how the other term is obtained. 

    Now, notice that since $\alphaparam < 2$, we can choose $\pconst$ large enough such that $(\alphaparam d + 1) - \frac{2\pconst}{\alphaparam} < - \pconst$. Then, the second term in \ref{eq:S} is at most $(k+1)^{-\beta} e^{\ceconst{} k^{1/\Delta}}$ as desired. For the first term, we notice that the same holds if $k$ is large enough. Hence, for all $k$, $S$ is at most some constant $C$ times $(k+1)^{-\beta} e^{\ceconst{} k^{1/\Delta}}$. We use this to conclude that
    \begin{align*}
        &\Pr{ d_G(x,y) \le k, \text{longest edge is internal} \mid w_u, w_v, w_x, w_y} \\ 
        & \hspace{2cm} \le \funcconst \intconst^2 \lambda \Cconst^{1 - \frac{2}{\alphaparam}}  w_u^{\alphaparam + 1} w_v^{\alphaparam + 1} w_x^\alphaparam w_y^\alphaparam |x - y|^{-\alphaparam d} (\hops + 1)^{-\pconst} e^{\ceconst \hops^{1/\bigDelta}}\Cconst^{-1}\\
        & \hspace{2cm} = \funcconst \intconst^2 \lambda \Cconst^{1 - \frac{2}{\alphaparam}} w_u^{\alphaparam + 1}w_v^{\alphaparam + 1} \cdot \IHabbr{\geomdist{x}{y}}{\hops}{w_x}{w_y}.
    \end{align*}

    Since we assume that $\alphaparam < \tau - 2 $, we can 
    integrate $w_u, w_v$ out such that the corresponding integrals over $w_u$ and $w_v$ converge and only obtain another constant factor overhead. Then, we can choose $\Cconst$ large enough to compensate these constant overheads. Notice that this works since we have a factor of $\Cconst^{1 - \frac{2}{\alphaparam}}$ where the exponent is negative because $\alphaparam < 2$.
    In total, we have shown that we can choose the constants $\pconst, \ceconst$ and $\Cconst$ such that the above bound is at most $\frac{1}{3}\IHabbr{\geomdist{x}{y}}{\hops}{w_x}{w_y}$ for all $k$.

    Now, let us also bound the probability of paths in which the longest edge is adjacent to either $x$ or $y$. To this end, we sum over all possible vertices $z$ connected to $x$ by an edge of (geometric) length $\ge |x-y|/k$. Again, by the BK inequality, we have \begin{align*}
        &\Pr{d_G(x,y) \le k, \text{ longest edge incident to } x \mid w_x, w_z, w_y } \\
        &\hspace{1cm} \le \percparam w_x^\alphaparam w_z^\alphaparam \left( \frac{\geomdist{x}{y}}{\hops} \right)^{-\alphaparam d} \left( \int_{1}^{\infty} c_d {r}^{d-1} \min\left\{1, \IHabbr{r}{\hops-1}{w_z}{w_y}\right\} \d r \right)\\
        &\hspace{1cm} \le C \percparam w_x^\alphaparam w_z^\alphaparam \left( \frac{\geomdist{x}{y}}{\hops} \right)^{-\alphaparam d} \hops^{-\frac{\pconst}{\alphaparam}}e^{\frac{\ceconst}{\alphaparam} {\hops}^{1/\bigDelta}}\myweightof{z}\myweightof{y}\Cconst^{-\frac{1}{\alphaparam}}\\
        &\hspace{1cm} \le C \percparam w_z^{\alphaparam+1} \cdot (k+1)^{\beta} k^{\alpha d - \frac{\beta}{\alpha}} e^{c_1(\frac{1}{\alpha} - 1) k^{1/\Delta} } \Cconst^{1 - \frac{1}{\alpha}} \cdot \IHabbr{|x-y|}{\hops}{w_x}{w_y}.
    \end{align*}

    Again, by integrating out $w_z$, we get another constant factor. We can now choose $c_1$ large enough so that the term exponential in $\hops$ (which has a negative exponent since $\frac{1}{\alpha} - 1 < 0$) swallows the polynomial and constant terms for every $\hops$, ensuring that the factor in front of $\IHabbr{|x-y|}{\hops}{w_x}{w_y}$ is at most $\frac{1}{3}$, as desired. Finally, summing the three possibilities that the longest edge is internal, the first, or the last edge on the path yields that overall, $\Pr{d_G(x,y) \le k \mid w_x, w_y } \le \IHabbr{|x-y|}{\hops}{w_x}{w_y}$ and finishes the proof.
    \end{proof}

    As promised, we now deal with cases where $\alphaparam \ge \tauparam - 2$ by coupling SFP to SPF with larger $\alpha$ without decreasing distances using the following lemma.

    \begin{lemma}\label{lem:changealpha}
    Let $\alphaparam$ and $\percparam$ be the long-range and percolation parameters of some instance of \SFP. Fix the weights of all vertices and let $p_{uv}$ refer to the probability that two vertices $u$ and $v$ are connected by an edge. Fix some $\alphaparam' < \alphaparam$. Then, $p_{uv} \le \min\{1, \percparam^{{\alphaparam'}/{\alphaparam}}w_u^{\alphaparam'}w_v^{\alphaparam'} |u-v|^{-d{\alphaparam'}}\}$. In particular, this means that the original SFP graph (with parameter $\alphaparam$) is a subgraph of the one with parameters $\alphaparam'$ and $\percparam' = \percparam^{{\alphaparam'}/{\alphaparam}}$.
\end{lemma}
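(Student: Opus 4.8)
The plan is to reduce the inequality to a single elementary fact about the function $t\mapsto \min\{1,t\}$ under taking powers with exponent in $(0,1)$. Fix the vertex weights and write $s \coloneqq \alphaparam'/\alphaparam \in (0,1)$ and, for the given pair $u,v$, set $t \coloneqq \percparam\bigl(w_u w_v \geomdist{u}{v}^{-d}\bigr)^{\alphaparam}$, so that by definition $p_{uv} = \min\{1,t\}$. A direct computation shows that the claimed upper bound is exactly
\[
  \percparam^{\alphaparam'/\alphaparam} w_u^{\alphaparam'} w_v^{\alphaparam'} \geomdist{u}{v}^{-d\alphaparam'} = \Bigl(\percparam\bigl(w_u w_v \geomdist{u}{v}^{-d}\bigr)^{\alphaparam}\Bigr)^{s} = t^{s},
\]
so it suffices to prove $\min\{1,t\} \le \min\{1,t^{s}\}$ for every $t \ge 0$.

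I would prove this by a two-line case distinction. If $t \le 1$, then since $s \in (0,1)$ we have $t^{s} \ge t$ (trivially if $t \in \{0,1\}$, and because $x\mapsto x^{s}$ is increasing and lies above the identity on $(0,1)$ otherwise); monotonicity of $a \mapsto \min\{1,a\}$ then gives $\min\{1,t^{s}\} \ge \min\{1,t\}$. If $t > 1$, then $t^{s} > 1$ as well, so both sides equal $1$. In either case $p_{uv} = \min\{1,t\} \le \min\{1,t^{s}\} = \min\{1,\percparam^{\alphaparam'/\alphaparam} w_u^{\alphaparam'} w_v^{\alphaparam'}\geomdist{u}{v}^{-d\alphaparam'}\}$, which is the first assertion.

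For the "in particular" part, I would observe that the right-hand side above is precisely the long-range connection probability $p_{uv}'$ in the SFP instance with parameters $\alphaparam'$ and $\percparam' = \percparam^{\alphaparam'/\alphaparam}$, evaluated at the same weights. Conditioning on the (common) weights, the long-range edges of both models are independent across pairs, so we may couple the two edge sets pair by pair using the same uniform random variable, which places every edge present in the $\alphaparam$-model also in the $\alphaparam'$-model because $p_{uv} \le p_{uv}'$; the deterministic grid edges are present in both. Hence under this coupling the $\alphaparam$-graph is a subgraph of the $\alphaparam'$-graph.

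I expect essentially no real obstacle here: the core is the one-line inequality $\min\{1,t\}\le\min\{1,t^{s}\}$, and the only points requiring a word of care are the degenerate values $t\in\{0,1\}$ and making the subgraph coupling explicit (including noting that grid edges are unaffected by the change of parameters).
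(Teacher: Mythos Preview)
Your proposal is correct and is essentially the same argument as the paper's: both reduce the claim to the elementary fact that $\min\{1,t\}\le\min\{1,t^{s}\}$ for $s\in(0,1)$ (the paper phrases this as $(\min\{1,a\})^{\alphaparam}\le(\min\{1,a\})^{\alphaparam'}$ with $a=\percparam^{1/\alphaparam}w_uw_v\geomdist{u}{v}^{-d}$, which is the same inequality after the substitution $t=a^{\alphaparam}$). Your explicit coupling for the ``in particular'' part is a welcome elaboration that the paper leaves implicit.
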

\begin{proof}
    We have 
    \begin{align*}
    p_{uv} & \le \min\{1, \percparam w_u^{\alphaparam}w_v^{\alphaparam} |u-v|^{-d{\alphaparam}}\} = \left(\min\{1, \percparam^{\frac{1}{\alphaparam}} w_uw_v |u-v|^{-d}\}\right)^{\alphaparam}\\
    & \le \left(\min\{1, \percparam^{\frac{1}{\alphaparam}} w_uw_v |u-v|^{-d}\}\right)^{\alphaparam '} = \min\{1, \percparam^{\frac{\alphaparam'}{\alphaparam}}w_u^{\alphaparam'}w_v^{\alphaparam'} |u-v|^{-d{\alphaparam'}}\}.\qedhere
    \end{align*}
\end{proof}

The implication of Lemma \ref{lem:changealpha} is that we can artificially ensure that $\alphaparam < \tauparam - 2$ by setting $\alphaparam' = \tauparam - 2 - \varepsilon$ for an arbitrarily small $\varepsilon$ and $\lambda' = \lambda^{\alphaparam'/\alphaparam}$. This allows us to prove Theorem~\ref{thm:mainSFPlowerboundTheorem} by applying \Cref{lem:mainSFPlemma} to this model since here, graph distances only get shorter due to \Cref{lem:changealpha}. We defer the proof to \Cref{sec:BK_inequality_and_proofs}, since it is only technical and the ideas in it are already presented.

\mainSFPlowerboundTheorem*

\subsection{Proof of the shape theorem}
With~\Cref{thm:mainSFPlowerboundTheorem} at hand, we can now prove~\Cref{thm:fullshapetheorem}, our shape theorem. First, we will show that w.h.p.\ (as $k$ increases) all vertices in a graph-theoretic distance of at most $k$ from a fixed vertex $x$ are found at a geometric distance of at most $r(k)$ from $x$, for some function $r(k)$.

\newcommand{\shapebadevent}{\mathcal{E}}
\newcommand{\shapeConst}{C}
\begin{thm}\label{thm:shapetheorem}
        Let $G$ be an SFP graph with parameters $\alphaparam{} \in (1, 2), \tauparam{} > 3$ and $\percparam$. 
Fix any  sufficiently small $\varepsilon > 0$ and let $\Delta' = \Delta(\min\{\alphaparam, \tauparam - 2 - \varepsilon\})$. Let $\mathcal{B}(x, k)$ denote the set of vertices reachable with at most $k$ edges from a fixed vertex $x$. Let $r(k)$ be some function of $k$ and denote by $\shapebadevent{}$ the event that there exists a vertex $y \in \mathcal{B}(x, k)$ with $|x - y| > r(k)$. We have
\[\Pr{\shapebadevent{}} \le \shapeConst{} r(k)^{d (1 - \alphaparam)} (k+1)^{-\pconst{}} \exp\left(\ceconst{} k^{1/\Delta'}\right) \]
for $\ceconst{}, \pconst{}$ as in Theorem \ref{thm:mainSFPlowerboundTheorem} and some $\shapeConst{}$ depending on the model parameters.
\end{thm}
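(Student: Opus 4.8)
The plan is a straightforward first-moment argument. The event $\shapebadevent$ is exactly the union, over all vertices $y$ with $\geomdist{x}{y} > r(k)$, of the events $\{y \in \mathcal{B}(x,k)\} = \{\graphdist{x}{y} \le k\}$, so a union bound gives
\[
\Pr{\shapebadevent} \le \sum_{y \,:\, \geomdist{x}{y} > r(k)} \Pr{\graphdist{x}{y} \le k}.
\]
For each such $y$ I would invoke \Cref{thm:mainSFPlowerboundTheorem} (which is unconditional and holds for every pair of vertices and every $k$) to get $\Pr{\graphdist{x}{y} \le k} \le \Cconst^{-1} \geomdist{x}{y}^{-\alphaparam d} (k+1)^{-\pconst} \exp(\ceconst k^{1/\Delta'})$. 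The factor $(k+1)^{-\pconst}\exp(\ceconst k^{1/\Delta'})$ is independent of $y$ and can be pulled out of the sum.

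It then remains to bound $\sum_{y \,:\, \geomdist{x}{y} > r(k)} \geomdist{x}{y}^{-\alphaparam d}$. Grouping the lattice points into spherical shells — there are at most $c_d \rho^{d-1}$ of them at geometric distance in $[\rho, \rho+1)$, with $c_d$ the same dimensional constant used in \Cref{lem:mainSFPlemma} — this sum is at most a constant multiple of $\int_{r(k)}^{\infty} \rho^{d-1}\rho^{-\alphaparam d}\,\d\rho$. Since $\alphaparam > 1$, the exponent $d - 1 - \alphaparam d = d(1-\alphaparam) - 1$ is strictly less than $-1$, so the integral converges and equals $\tfrac{r(k)^{d(1-\alphaparam)}}{d(\alphaparam - 1)}$. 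Absorbing $c_d$, $\Cconst^{-1}$, $\tfrac{1}{d(\alphaparam-1)}$ and the constant from the sum-to-integral comparison into a single constant $\shapeConst$ depending only on the model parameters yields
\[
\Pr{\shapebadevent} \le \shapeConst\, r(k)^{d(1-\alphaparam)} (k+1)^{-\pconst} \exp(\ceconst k^{1/\Delta'}),
\]
which is the claim.

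I do not expect any genuine obstacle here; this theorem is a clean corollary of \Cref{thm:mainSFPlowerboundTheorem}. The only two points needing (routine) care are the shell-counting estimate $O(\rho^{d-1})$ for the number of lattice points at a given distance, and the convergence of the tail integral, which is precisely where the standing assumption $\alphaparam > 1$ enters. One should also bound the contribution of the innermost shell near $\rho = r(k)$ when passing from the sum to the integral, but this only costs a constant factor, already accounted for in $\shapeConst$.
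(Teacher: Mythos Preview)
Your proposal is correct and matches the paper's own proof essentially verbatim: the paper also proves this theorem by a union bound over all vertices $y$ with $\geomdist{x}{y} > r(k)$, applies \Cref{thm:mainSFPlowerboundTheorem} to each, and sums the resulting $\geomdist{x}{y}^{-\alphaparam d}$ terms to obtain the factor $r(k)^{d(1-\alphaparam)}$. Your write-up actually spells out the shell-counting and integral more explicitly than the paper does.
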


\begin{proof}
    The proof follows by a simple union bound over all vertices at a distance of more than $r(k)$ from $x$ by invoking Theorem \ref{thm:mainSFPlowerboundTheorem} for each of them. The term $r(k)^{d (1 -\alphaparam)}$ is the result of summing the term $|x - y|^{-\alphaparam d}$ from the bound of \Cref{thm:mainSFPlowerboundTheorem} over all vertices. The constant $\shapeConst$ stems from that integration as well and hides other constants in the invocation of Theorem \ref{thm:mainSFPlowerboundTheorem}.
\end{proof}

Note that as a corollary, we get by choosing $r(k) \ge e^{\frac{\ceconst{}}{d (\alphaparam - 1)} k^{1/\Delta '}}$ that w.h.p.\ (in particular, probability at most $1 - \mathcal{O}(k^{-\pconst{}})$), the event $\shapebadevent{}$ does not occur.
\begin{corollary}\label{cor:shapecor}
    For any fixed $x\in \ZZ^d$, there is a constant $c$ such that \begin{align*}
        \lim_{k \rightarrow \infty} \Pr{\max\{|x - y| \: \mid \: y \in \mathcal{B}(x, k)\} \le \exp\left(c k^{1/\Delta'}\right) } = 1
    \end{align*}
    where $\Delta'$ is as in Theorem \ref{thm:shapetheorem} and $c$ depends on the model parameters and $\Delta'$.
    If additionally $\alphaparam < \tau - 2$, \begin{align*}
        \lim_{k \rightarrow \infty} \Pr{\max\{|x - y| \: \mid \: y \in \mathcal{B}(x, k)\} \le \exp\left(c k^{1/\Delta}\right) } = 1
    \end{align*}
    
\end{corollary}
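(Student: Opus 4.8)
The plan is to specialize \Cref{thm:shapetheorem} to the right choice of $r(k)$ and observe that the resulting bound on $\Pr{\shapebadevent}$ tends to $0$. Concretely, I would take $r(k) = \exp(c\,k^{1/\Delta'})$ for a constant $c > 0$ to be fixed; then the complementary event $\shapebadevent^c$ is precisely $\{\max\{|x-y| : y \in \mathcal{B}(x,k)\} \le \exp(c\,k^{1/\Delta'})\}$, which is the event appearing in the corollary. Substituting this $r(k)$ into the bound of \Cref{thm:shapetheorem}, the geometric factor becomes $r(k)^{d(1-\alphaparam)} = \exp\bigl(-c\,d(\alphaparam-1)\,k^{1/\Delta'}\bigr)$, where the exponent is negative since $\alphaparam \in (1,2)$.

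Combining this with the $\exp(\ceconst\,k^{1/\Delta'})$ factor from \Cref{thm:shapetheorem} gives the single exponential $\exp\bigl((\ceconst - c\,d(\alphaparam-1))\,k^{1/\Delta'}\bigr)$. I would then pick $c \ge \ceconst/(d(\alphaparam-1))$, which is possible precisely because $\alphaparam - 1 > 0$, so that the combined exponent is nonpositive and this factor is at most $1$. What is left is $\Pr{\shapebadevent} \le \shapeConst\,(k+1)^{-\pconst}$, which tends to $0$ as $k \to \infty$ because $\pconst > 0$ by \Cref{thm:mainSFPlowerboundTheorem}. Passing to the complement yields the first displayed limit, with a constant $c$ depending only on $\ceconst$, $d$ and $\alphaparam$ — hence on the model parameters and $\Delta'$, as asserted.

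For the second statement, under the additional hypothesis $\alphaparam < \tauparam - 2$ I would note that one may take $\eps < \tauparam - 2 - \alphaparam$, so that $\tauparam - 2 - \eps > \alphaparam$ and therefore $\min\{\alphaparam, \tauparam - 2 - \eps\} = \alphaparam$, giving $\Delta' = \Delta(\alphaparam) = \Delta$. Repeating the computation above verbatim with $\Delta$ in place of $\Delta'$ produces the second limit.

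I do not expect any real obstacle: the argument is a one-line substitution into \Cref{thm:shapetheorem} followed by the elementary observation that the leading exponential's exponent can be made nonpositive by enlarging $c$, after which the polynomial factor $(k+1)^{-\pconst}$ alone drives the probability to $0$. The only things worth double-checking are that $\alphaparam - 1 > 0$ (so that increasing $c$ helps rather than hurts), which holds since $\alphaparam \in (1,2)$, and that $\pconst > 0$, which is part of the conclusion of \Cref{thm:mainSFPlowerboundTheorem}.
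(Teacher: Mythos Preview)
Your proposal is correct and matches the paper's own argument essentially verbatim: the paper also specializes \Cref{thm:shapetheorem} with $r(k) = e^{c k^{1/\Delta'}}$ for $c \ge \ceconst/(d(\alphaparam-1))$, leaving a bound of $\mathcal{O}(k^{-\pconst})$ that tends to zero, and handles the case $\alphaparam < \tauparam - 2$ by noting that then $\Delta' = \Delta$ for small enough $\eps$.
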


We have thus already proven one side of the shape theorem, which we restate here for convenience and finally prove in full. Note that we have actually proven the upper bound in a slightly stronger form with $r(k) = e^{ck^{1/\Delta}}$ if $\alpha < \tau - 2$ (without the $\eps$), but we don't have the same precision for the lower bound. This lack of precision is due to the weaker assumption on the connection probability in~\cite{Biskup_2011}.

\fullshapetheorem*

\begin{proof}
We have already shown that w.h.p.\ $\mathcal{B}(x, k)\subseteq \farvertices$, which was the main part. The other relation in (\shapeEqTitle{}) follows from Theorem 1.2 in~\cite{Biskup_2011}, when $q(k) = e^{k^{1/\Delta - \epsilon}}$. In~\cite{Biskup_2011}, LRP is studied, with a less restrictive assumption on the connection probability between two vertices $x, y$. Namely, it is only assumed that the connection probability decays like $|x - y|^{-\alphaparam d + o(1)}$, which is also true in our case in particular. Moreover, one can couple SFP with LRP so that the former is a supergraph of the latter and thus has pointwise shorter graph distances. Therefore, we can invoke the results (which upper bound the distances) shown there for SFP as well. Note that when $\alphaparam < \tauparam - 2$, \Cref{cor:shapecor} can be rephrased so that $r(k) = e^{k^{1/\Delta + \varepsilon}}$, for large enough $k$. Together, these observations prove (\shapeEqTitle{}).

The second statement (\sizeEqTitle{}) follows from (\shapeEqTitle{}) by pure algebra, simply by observing that the number of vertices in $\nearvertices$ and $\farvertices$ is $\Theta(e^{dk^{1/\Delta \pm\eps}})$ respectively. Hence, (\shapeEqTitle{}) implies
\begin{align*}
    e^{dk^{1/\Delta-\eps} - \mathcal{O}(1)} \le |\mathcal{B}(x, k)| \le e^{dk^{1/\Delta+\eps} + \mathcal{O}(1)}.
\end{align*}
For sufficiently large $k$ we can swallow the factor $d$ and the term $\mathcal{O}(1)$ by slightly increasing the $\eps$ of the exponent, and obtain our desired inequality.
\end{proof}

\section{First Passage Percolation (FPP)}\label{sec:FPP}

In this section we study first passage percolation (FPP) on SFP. Recall that this means that we assign a cost to every edge which is drawn independently from an exponential distribution with rate $1$. For conciseness, we restrict ourselves to SFP even though the same technique would also work for GIRGs/HRGs. Note that we obtain the LRP model from SFP by informally setting $\tau =\infty$. Formally, since SFP is an increasing model in $\tau$ in terms of stochastic domination, the edge set of SFP with any finite $\tau$ stochastically dominates the edge set of LRP. Hence, all lower bounds on cost-distances from SFP also transfer to LRP.\footnote{The same is true for graph-distances, but here the results for LRP were already known.} In the following, we assume for simplicity that $\lambda = 1$; this does not affect our results.

In contrast to plain SFP, in FPP we have an additional source of randomness since not only the existence of an edge is random but also its cost. To prove lower bounds on cost-distances, it is therefore simpler (and sufficient) to consider a model in which there is only one source of randomness for the edges. We call this model \emph{Complete Scale Free First Passage Percolation}, or \CFFPP{} for short. Here, all edges exist a priori, i.e., the graph is fixed to be the complete graph with vertex set $\ZZ^d$. However, we now draw the cost of each edge by sampling from an exponential distribution with rate $w_u^{\alphaparam} w_v^{\alphaparam} |u - v|^{-\alphaparam d}$ (i.e.\ a rate that depends on the weights and geometric distance between the two endpoints) instead of rate $1$.

We start by showing that \SFFPP{} on SFP is dominated by \CFFPP{}, i.e., that cost-distances in \CFFPP{} can only become shorter as compared to \SFFPP{} on SFP. To that end, we need the following lemma that will allow us to combine the randomness of two events occurring with probability $\min\{1, \alphaparam\}$ and $(1 - e^{-b})$, respectively into an event occurring with probability $1 - e^{-ab}$.

\begin{lemma}\label{lem:expoprobineq}
    $\min\{1, a\}(1 - e^{-b}) \le 1 - e^{-ab}$ for all $a, b \ge 0$.
\end{lemma}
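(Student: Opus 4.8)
The plan is to split on whether $a \ge 1$ or $a < 1$. If $a \ge 1$, then $\min\{1,a\} = 1$ and the claimed inequality reduces to $1 - e^{-b} \le 1 - e^{-ab}$, i.e. $e^{-ab} \le e^{-b}$, which is immediate since $ab \ge b \ge 0$. So the only real work is the case $a \in [0,1)$, where $\min\{1,a\} = a$ and we must show $a(1-e^{-b}) \le 1-e^{-ab}$.

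For that case I would fix $b \ge 0$ and define the single-variable function $g(a) = 1 - e^{-ab} - a(1-e^{-b})$ on $[0,1]$; the goal is $g(a) \ge 0$ there. Note $g(0) = 0$ and $g(1) = 1 - e^{-b} - (1-e^{-b}) = 0$, so $g$ vanishes at both endpoints. It therefore suffices to show $g$ is concave on $[0,1]$, since a concave function with nonnegative values at the endpoints is nonnegative on the whole interval. Concavity follows by differentiating twice: $g'(a) = b e^{-ab} - (1-e^{-b})$ and $g''(a) = -b^2 e^{-ab} \le 0$ for all $a$ (and all $b \ge 0$). Hence $g$ is concave, and combined with $g(0)=g(1)=0$ we get $g(a) \ge 0$ for $a \in [0,1]$, which is exactly the desired inequality in this case.

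There is essentially no obstacle here; the lemma is a soft convexity fact. The only thing to be a little careful about is the degenerate cases $a=0$ or $b=0$, where both sides are $0$ and the inequality holds trivially, so they are absorbed into the argument above without special treatment. An alternative one-line route, if one prefers to avoid calculus, is to write $1 - e^{-ab} = 1 - (e^{-b})^a$ and apply Bernoulli's inequality in the form $t^a \le 1 - a(1-t) \le$ ... actually more directly: for $t = e^{-b} \in (0,1]$ and $a \in [0,1]$, concavity of $t \mapsto t^a$ gives $t^a \ge a t + (1-a)$, hence $1 - t^a \le 1 - at - (1-a) = a(1-t)$ — wait, that is the wrong direction, so one really does want the endpoint-concavity argument on $g$ as above rather than a naive Bernoulli bound. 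I would present the clean two-case split with the second derivative computation as the proof.
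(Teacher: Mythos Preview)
Your proof is correct and follows essentially the same approach as the paper: both split on whether $a \ge 1$ and handle the nontrivial case $a<1$ by a one-variable calculus argument on the difference function. The only cosmetic difference is that you fix $b$ and use concavity in $a$ (via $g''(a)=-b^2e^{-ab}\le 0$ with $g(0)=g(1)=0$), whereas the paper fixes $a$ and uses monotonicity in $b$ (via $f'(b)=a(e^{-b}-e^{-ab})\le 0$ with $f(0)=0$); either works.
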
 \begin{proof}
    If $a \ge 1$, then the inequality is easy to see, as $(1 - e^{-b}) \le (1 - e^{-ab})$ in this case. So, let us assume that $a < 1$ from now on. Consider the function 
    \[f(b) = a(1 - e^{-b}) - (1 - e^{-ab}).\]
    Note that it suffices to show that $f(b) \le 0$ for all $b \ge 0$. We can see that $f(0) = 0$ and also 
    \[f'(b) = a(e^{-b} - e^{-ab}) \le 0.\]
    This shows that the function $f(b)$ is non-increasing and since $f(0) = 0$, we have $f(b) \le 0$ for all $b \ge 0$.
\end{proof}

With this, we establish a coupling between \SFFPP{} on SFP and \CFFPP{} such that cost-distances in \CFFPP{} are at most as large as cost-distances in \SFFPP{} on SFP.

\begin{lemma}\label{lem:sffppcoupling}
    Let $u, v$ be a pair of vertices in $\ZZ^d$. Let further $X_{(u, v)}$ be the cost of the edge $\{u, v\}$ in \emph{\SFFPP{} on SFP} if it exists, and $X_{(u, v)} = \infty$ if the edge does not exist, and let $Y_{(u, v)}$ be its cost in \emph{\CFFPP{}}. Then for any $t \ge 0$,
    \begin{align*}
        \Pr{X_{(u, v)} \le t} \le \Pr{Y_{(u, v)} \le t}.
    \end{align*}
\end{lemma}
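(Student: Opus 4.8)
The plan is to compute both sides explicitly using the definitions of SFP and CFFPP and then invoke Lemma~\ref{lem:expoprobineq}. First I would fix the pair $u,v$ and condition on the weights $w_u,w_v$ (the edge cost/existence depends only on these and on $|u-v|$). Write $p \coloneqq \pSFP = \min\{1,\lambda w_u^\alpha w_v^\alpha |u-v|^{-\alpha d}\}$ for the SFP connection probability and $r \coloneqq w_u^\alpha w_v^\alpha |u-v|^{-\alpha d}$ for the CFFPP rate; since we assumed $\lambda = 1$ in this section, $p = \min\{1,r\}$. For the left-hand side, $X_{(u,v)} \le t$ requires both that the edge $\{u,v\}$ is present in SFP (probability $p$) and, given that, that its rate-$1$ exponential cost is at most $t$ (probability $1 - e^{-t}$); these two events use independent randomness, so $\Pr{X_{(u,v)} \le t} = p\,(1 - e^{-t}) = \min\{1,r\}(1 - e^{-t})$. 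For the right-hand side, $Y_{(u,v)}$ is exponential with rate $r$, so $\Pr{Y_{(u,v)} \le t} = 1 - e^{-rt}$.

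The inequality to prove then reads $\min\{1,r\}(1 - e^{-t}) \le 1 - e^{-rt}$, which is exactly Lemma~\ref{lem:expoprobineq} applied with $a = r \ge 0$ and $b = t \ge 0$. Hence the claimed bound holds conditionally on $w_u,w_v$, and since it holds for every fixed value of the weights, it holds after taking expectations over $w_u,w_v$ as well, giving the unconditional statement. (One should briefly note the degenerate cases: if $|u-v| = 0$ there is nothing to prove, and if $t = 0$ both sides are $0$.)

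I do not expect any real obstacle here — the only things requiring a line of care are: (i) spelling out that in SFP the event $\{X_{(u,v)} \le t\}$ genuinely factors as "edge present'' times "cost $\le t$'' because edge presence and edge cost are sampled independently, and that when the edge is absent $X_{(u,v)} = \infty > t$ contributes nothing; and (ii) matching the parameters correctly in Lemma~\ref{lem:expoprobineq}, in particular that the CFFPP rate $w_u^\alpha w_v^\alpha |u-v|^{-\alpha d}$ is precisely the quantity $a$ appearing there while the SFP truncation gives the $\min\{1,a\}$ factor. If one wanted to keep a general $\lambda$, one would instead set $a = \lambda w_u^\alpha w_v^\alpha|u-v|^{-\alpha d}$ and use the CFFPP rate with the same $\lambda$; the argument is unchanged. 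This lemma is the per-edge building block, and a standard coupling of product spaces (sampling each pair independently according to the monotone coupling guaranteed by the one-dimensional stochastic domination above) then upgrades it to a coupling of the two full models in which CFFPP cost-distances are pointwise at most SFP cost-distances, as stated in the surrounding text.
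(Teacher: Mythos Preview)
Your proof is correct and follows essentially the same route as the paper: compute both sides (conditionally on the weights) as $\min\{1,r\}(1-e^{-t})$ and $1-e^{-rt}$, then apply Lemma~\ref{lem:expoprobineq} with $a=r$ and $b=t$. The only cosmetic difference is that you make the conditioning on $w_u,w_v$ and the subsequent averaging explicit, whereas the paper leaves this implicit.
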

\begin{proof}
    For the event on the LHS to be true, the edge $\{u, v\}$ must exist and then \emph{independently} the cost must be drawn to be at most $t$. The probability for the first event is $\min\{1, (w_u w_v)^{\alphaparam}|u - v|^{-\alphaparam d}\}$ and the probability of the latter is $1 - e^{-t}$. For the event on the RHS, one simply needs that the cost sampled from an exponential distribution with rate $(w_u w_v)^{\alphaparam}|u - v|^{-\alphaparam d}$ is at most $t$ and the probability of this is exactly $1 - e^{-(w_u w_v)^{\alphaparam}|u - v|^{-\alphaparam d} t}$. Lemma \ref{lem:expoprobineq} finishes the proof.
\end{proof}

Lemma \ref{lem:sffppcoupling} shows that any lower bound shown for cost-distances in \CFFPP{} will also be true for \SFFPP{} on SFP. To see more clearly why this is true, note that we can couple the models in the following way. First, we sample the weights for the vertices in exactly the same way for both models. Then, conditioned on these weights, the probability space is a product space over independent one-dimensional random variables (technically, one of them can be infinite in value, but this is not a problem for our purposes) for which the inequality in \Cref{lem:sffppcoupling} holds. With this in mind, we continue by establishing the lower bound for cost-distances in \CFFPP{}. We will generally follow similar arguments as the ones presented in \cite{completeFPP}, which studies a model similar to \CFFPP{} but without vertex weights. To establish an upper bound on the probability that the cost-distance between two vertices is at most $t$, we need a bound on the probability that the sum of exponential random variables is at most $t$, which is provided in the following lemma, which in turn is an adaptation of Lemma 2.1 in \cite{completeFPP}.

\begin{lemma}\label{lem:passagetimeprobbound}
    Let $X_1, X_2, \dots, X_k$ be i.i.d.\ exponential random variables such that the rate of $X_i$ is $(w_i w_{i + 1})^{\alphaparam}|u_i - u_{i + 1}|^{-\alphaparam d}$, for some sequence of vertices $u_j$ with corresponding weight $w_j$, with $1 \le j \le k + 1$. The $w_i$ are drawn from a power law with exponent $\tauparam$. Assume that $2 \alphaparam < \tauparam - 1$. Then, there exists a $c > 0$ depending only on $\alphaparam, \tauparam$ such that for all $t \ge 0$,
    \begin{align*}
        \Pr{\sum\limits_{i = 1}^{k} X_i \le t} \le \left(\frac{ect}{k}\right)^k \prod\limits_{i=1}^{k} |u_i - u_{i + 1}|^{-\alphaparam d},
    \end{align*}
    where the above probability is taken over the randomness of the weights and the $X_i$ values.
\end{lemma}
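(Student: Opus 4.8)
The plan is to condition on the vertex weights, which turns the left-hand side into the lower-tail probability of a sum of \emph{independent} exponentials with explicit rates, then bound that by an optimized Chernoff (exponential-moment) argument, and finally average over the weights --- where the hypothesis $2\alphaparam<\tauparam-1$ will enter exactly once. Throughout I will take the vertices $u_1,\dots,u_{k+1}$ to be distinct --- the only case needed in the application, where they form a path --- so that the weights $w_1,\dots,w_{k+1}$ are i.i.d.

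First I would fix the weights and write $\rho_i:=(w_iw_{i+1})^{\alphaparam}\geomdist{u_i}{u_{i+1}}^{-\alphaparam d}$ for the rate of $X_i$ (the case $t=0$ is trivial, so assume $t>0$). For any $\theta>0$, Markov's inequality applied to $\exp(-\theta\sum_i X_i)$, together with $\Expected{e^{-\theta X_i}}=\rho_i/(\rho_i+\theta)\le\rho_i/\theta$, gives
\[
  \Pr{\sum_{i=1}^{k}X_i\le t \mid w_1,\dots,w_{k+1}} \;\le\; e^{\theta t}\prod_{i=1}^{k}\frac{\rho_i}{\rho_i+\theta} \;\le\; e^{\theta t}\,\theta^{-k}\prod_{i=1}^{k}\rho_i.
\]
The choice $\theta=k/t$ minimizes $e^{\theta t}\theta^{-k}$, with value $(et/k)^{k}$, so substituting the $\rho_i$ back in,
\[
  \Pr{\sum_{i=1}^{k}X_i\le t \mid w_1,\dots,w_{k+1}} \;\le\; \Big(\frac{et}{k}\Big)^{k}\prod_{i=1}^{k}(w_iw_{i+1})^{\alphaparam}\geomdist{u_i}{u_{i+1}}^{-\alphaparam d}.
\]

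Next I would average over the weights. The product telescopes, $\prod_{i=1}^{k}(w_iw_{i+1})^{\alphaparam}=w_1^{\alphaparam}w_{k+1}^{\alphaparam}\prod_{i=2}^{k}w_i^{2\alphaparam}$, so with $w$ a generic power-law weight, independence gives $\Expected{\prod_{i=1}^{k}(w_iw_{i+1})^{\alphaparam}}=\Expected{w^{\alphaparam}}^{2}\Expected{w^{2\alphaparam}}^{k-1}$. This is where $2\alphaparam<\tauparam-1$ is used: it makes $c:=\Expected{w^{2\alphaparam}}=1+\tfrac{2\alphaparam}{\tauparam-1-2\alphaparam}$ finite (and a fortiori $\Expected{w^{\alphaparam}}<\infty$), a constant depending only on $\alphaparam,\tauparam$; and by Cauchy--Schwarz $\Expected{w^{\alphaparam}}^{2}\le\Expected{w^{2\alphaparam}}=c$, so $\Expected{\prod_{i=1}^{k}(w_iw_{i+1})^{\alphaparam}}\le c^{k}$. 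Averaging the conditional bound then yields
\[
  \Pr{\sum_{i=1}^{k}X_i\le t} \;\le\; \Big(\frac{et}{k}\Big)^{k}\Big(\prod_{i=1}^{k}\geomdist{u_i}{u_{i+1}}^{-\alphaparam d}\Big)c^{k} \;=\; \Big(\frac{ect}{k}\Big)^{k}\prod_{i=1}^{k}\geomdist{u_i}{u_{i+1}}^{-\alphaparam d},
\]
as claimed.

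I expect the only delicate step to be the Chernoff bound: the naive estimate $\Pr{\sum_i X_i\le t}\le\prod_i\Pr{X_i\le t}\le t^{k}\prod_i\rho_i$ is too weak, since it misses the factor $k^{-k}$ (equivalently $1/k!$) that will be essential to make the bound summable over the many candidate paths later on; optimizing $\theta$ is exactly what recovers it. Everything else is a routine exponential-moment computation, in the spirit of Lemma~2.1 of \cite{completeFPP}.
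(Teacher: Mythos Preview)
Your proof is correct and follows essentially the same approach as the paper: an exponential-moment (Chernoff) bound conditional on the weights, followed by averaging out the weights using $2\alphaparam<\tauparam-1$ to ensure $\Expected{w^{2\alphaparam}}<\infty$, and the optimal choice $\theta=k/t$. The only cosmetic difference is that the paper crudely bounds $\prod_i\rho_i\le\prod_{i=1}^{k+1}w_i^{2\alphaparam}\prod_i\geomdist{u_i}{u_{i+1}}^{-\alphaparam d}$ and then sets $c=(c')^2$ to absorb the $(k{+}1)$st factor, whereas you telescope exactly and use Cauchy--Schwarz --- but this is the same argument.
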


\begin{proof}
    Note that each $X_i = \frac{Y_i}{(w_i w_{i + 1})^{\alphaparam}|u_i - u_{i + 1}|^{-\alphaparam d}}$, where $Y_i$ is an exponential random variable with rate 1. Let us use $\rate_i = (w_i w_{i + 1})^{\alphaparam}|u_i - u_{i + 1}|^{-\alphaparam d}$ from now on. By Markov's inequality, we have
    \begin{align*}
        \Pr{\sum\limits_{i = 1}^{k} X_i \le t} = \Pr{\exp\left(-\theta \sum\limits_{i = 1}^{k} X_i\right) \ge e^{-\theta t} } \le e^{\theta t} \Expected{\exp\left(-\theta \sum\limits_{i = 1}^{k} X_i\right)}.
    \end{align*}

    Now, let us bound the expectation above. Once one fixes the weights $w_j$, each $X_i$ is independent from each other. Moreover, for $Y_i$ with rate one, it holds that $\Expected{\exp(-\theta Y_i)} = \frac{1}{1 + \theta} \le \frac{1}{\theta}$ for $\theta > 0$. So, for a fixed realization $w_1, w_2, \dots, w_{k + 1}$ of the weights, we have:
    \begin{align*}
        \Expected{\exp\left(-\theta \sum\limits_{i = 1}^{k} X_i\right) \mid w_1, w_2, \dots, w_{k + 1}} &= \prod\limits_{i = 1}^{k} \Expected{\exp\left(-\frac{\theta}{\rate_i} Y_i\right)} \\ 
        &\le \prod\limits_{i = 1}^{k} \frac{\rate_i}{\theta}\\
        &\le \theta^{-k} \prod\limits_{i = 1}^{k+1}(w_i)^{2 \alphaparam} \prod\limits_{i=1}^{k} |u_i - u_{i + 1}|^{-\alphaparam d}.
    \end{align*}

    The weight terms are raised to $2 \alphaparam$, since each weight $w_i$ enters in (at most) two $\rate_j$ as $w_i^\alphaparam$. Integrating the weights out, we see that since they are independent, one has
    \[\Expected{\exp\left(-\theta \sum\limits_{i = 1}^{k} X_i\right)} \le \left[\theta^{-k} \prod\limits_{i=1}^{k} |u_i - u_{i + 1}|^{-\alphaparam d}\right] \prod\limits_{i = 1}^{k + 1}\Expected{(w_i)^{2 \alphaparam}}.\]

    Now, since $2 \alphaparam - \tauparam < -1$, the expectations inside the rightmost product are all at most some constant $c'$. Let $c$ be e.g.\ equal to $(c')^2$ such that $c^{k} \ge (c')^{k + 1}$. Collecting the above bounds, we have
    
 \[\Pr{\sum\limits_{i = 1}^{k} X_i \le t} \le e^{\theta t} \left[\theta^{-k} \prod\limits_{i=1}^{k} |u_i - u_{i + 1}|^{-\alphaparam d}\right] c^k.\]

Setting $\theta = \frac{k}{t}$ shows the desired bound.
\end{proof}

With Lemma \ref{lem:passagetimeprobbound} at hand, we show that the expected size of the $t$-ball around the origin grows at most exponentially with $t$. We define this ball $\costballll{x}{t}$ as the set of vertices reachable from vertex $x$ with a path of cost-distance at most $t$. Exponential growth is not enough by itself for our goal of showing a polylogarithmic lower bound on the distances but is a crucial step in doing so. To do this, we modify the proof of Lemma 2.6 and Theorem 1.2 (ii) in \cite{completeFPP}. In the following, we only consider the growth of $\costballll{0}{t}$, i.e., the $t$-ball around the origin, but it is easy to see that (by symmetry) the same statements hold if we replace the origin by any vertex $x$.

\begin{thm}[Exponential Ball Growth]\label{thm:expectedballexpo}
    Let $\costballll{0}{t}$ denote the set of vertices reachable with a path of cost at most $t$ from the origin in \emph{\CFFPP{}}. If $2 \alphaparam < \tauparam - 1$, we have for some $C$ depending only on $\alphaparam$ and $\tauparam$,
    \[\Expected{|\costballll{0}{t}|} \le e^{C t}.\]
\end{thm}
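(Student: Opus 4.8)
The plan is a straightforward first-moment computation built on top of \Cref{lem:passagetimeprobbound}. Write
\[
\Expected{|\costballll{0}{t}|} \;=\; \sum_{y \in \ZZ^d} \Pr{y \in \costballll{0}{t}} \;=\; 1 + \sum_{y \in \ZZ^d \setminus \{0\}} \Pr{y \in \costballll{0}{t}},
\]
where the first equality is just linearity of expectation (Tonelli, all summands nonnegative), and the origin contributes exactly $1$ since the empty path has cost $0$. For a vertex $y \neq 0$ to lie in $\costballll{0}{t}$ there must be a finite \emph{simple} path $\pi$ from $0$ to $y$, say with vertices $0 = u_1, u_2, \dots, u_{k+1} = y$ and $k$ edges, whose total cost is at most $t$. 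Along a simple path all vertices are distinct, so the edge costs are independent exponential random variables with the rates $\rate_i = (w_{u_i} w_{u_{i+1}})^{\alphaparam}|u_i - u_{i+1}|^{-\alphaparam d}$ required by \Cref{lem:passagetimeprobbound}, whose hypothesis $2\alphaparam < \tauparam - 1$ is exactly our standing assumption. A union bound over all such paths then yields
\[
\Pr{y \in \costballll{0}{t}} \;\le\; \sum_{k \ge 1}\ \sum_{\substack{0 = u_1, \dots, u_{k+1} = y \\ \text{simple path}}} \left(\frac{ect}{k}\right)^{k} \prod_{i=1}^{k} |u_i - u_{i+1}|^{-\alphaparam d}.
\]

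The next step is to sum this over $y \in \ZZ^d \setminus \{0\}$, which simply lets the endpoint $u_{k+1}$ range freely over $\ZZ^d$. Dropping the simplicity constraint (keeping only $u_i \neq u_{i+1}$) can only enlarge the sum, so it suffices to bound $\sum \prod_{i=1}^{k} |u_i - u_{i+1}|^{-\alphaparam d}$ over all walks with $u_1 = 0$ fixed. Since $\alphaparam > 1$ we have $\alphaparam d > d$, hence $\rho := \sum_{v \in \ZZ^d,\, v \neq 0} |v|^{-\alphaparam d}$ is a finite constant (depending only on $\alphaparam, d$); summing out $u_{k+1}$, then $u_k$, and so on, each step contributes exactly a factor $\rho$ by translation invariance, giving a bound of $\rho^{k}$. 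Therefore
\[
\Expected{|\costballll{0}{t}|} \;\le\; 1 + \sum_{k \ge 1} \left(\frac{e\rho c\, t}{k}\right)^{k}.
\]

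To close, I would invoke the elementary inequality $k^{k} \ge k!$, valid for all $k \ge 1$, which converts the series into (a tail of) the exponential series: with $A := e\rho c$,
\[
\sum_{k \ge 1} \left(\frac{At}{k}\right)^{k} \;\le\; \sum_{k \ge 1} \frac{(At)^{k}}{k!} \;=\; e^{At} - 1,
\]
so $\Expected{|\costballll{0}{t}|} \le e^{At}$ and the theorem follows with $C := e\rho c$, which depends only on $\alphaparam$ and $\tauparam$. I do not expect a genuine obstacle here: the proof consumes only \Cref{lem:passagetimeprobbound}, the convergence of $\sum_{v} |v|^{-\alphaparam d}$ (where $\alphaparam > 1$ is essential), and the bound $k! \le k^{k}$. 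The single point deserving care is that the union bound must run over \emph{simple} paths, so that the edge costs along a path are honestly independent and \Cref{lem:passagetimeprobbound} applies verbatim; enlarging to the sum over all walks afterwards is harmless since we only want an upper bound on the combinatorial/geometric factor.
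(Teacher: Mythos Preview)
Your proof is correct and follows essentially the same approach as the paper: linearity of expectation, union bound over paths, \Cref{lem:passagetimeprobbound}, and then the comparison $(At/k)^k \le (At)^k/k!$ to sum the resulting series. The only cosmetic difference is the order of summation: the paper keeps the endpoint fixed and invokes an external lemma (Lemma~2.5(c) of \cite{completeFPP}) to bound $\sum_{\text{paths}}\prod_i |u_i-u_{i+1}|^{-\alpha d} \le b^k|u|^{-\alpha d}$ before summing over $u$, whereas you sum the endpoint out first and then peel off the walk one step at a time to get $\rho^k$; your ordering is slightly more self-contained and also sidesteps the small convexity argument the paper uses at the end to absorb the constant $c_1$ into the exponent.
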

\begin{proof}
We compute
    \begin{align*}
    \Expected{|\costballll{0}{t}|} & = \sum\limits_{u \in \mathbb{Z}^d} \Pr{\graphdistcost{0}{u} \le t}\\
    & \le 1 + \sum\limits_{\substack{ u \in \mathbb{Z}^d\\ u \neq 0}} \hspace{.5cm}\sum\limits_{k = 1}^{\infty}\sum_{\substack{(0, u)-\text{path $\pi$} \\ \text{of length $k$}}} \Pr{\pi \text{ has cost distance at most }t}\\
    & \overset{\text{Lemma \ref{lem:passagetimeprobbound}}}{\le} 1 + \sum\limits_{\substack{ u \in \mathbb{Z}^d\\ u \neq 0}} \sum\limits_{k = 1}^{\infty} \left(\frac{ect}{k}\right)^k \sum_{\substack{(0, u)-\text{path} \\ (0=u_1, u_2, \dots, u_{k+1} = u)}} \left[\prod\limits_{i=1}^{k} |u_i - u_{i + 1}|^{-\alphaparam d}\right].
    \end{align*}

    The constant $c$ above is as in Lemma \ref{lem:passagetimeprobbound}. The rightmost sum above can be bounded by $b^k |u|^{-\alphaparam d}$ for some $b$ depending only on $\alphaparam$. This is done by Lemma 2.5 (c) in \cite{completeFPP} (the quantity bounded there is the above sum and is defined in equation (2.3) in the page previous to that of Lemma 2.5). With that in mind, we have

    \[\Expected{|\costballll{0}{t}|} \le 1 + \left(\sum\limits_{u \in \mathbb{Z}^d, u \neq 0} |u|^{-\alphaparam d}\right) \left(\sum\limits_{k = 1}^{\infty} \left(\frac{ecbt}{k}\right)^k \right).\]

    Since $\alphaparam > 1$, the first sum above is bounded by a constant $c_1$. Moreover, note that 
    \[\sum\limits_{k = 1}^{\infty} \left(\frac{ecbt}{k}\right)^k \le \sum\limits_{k = 0}^{\infty} \frac{(ecbt)^k}{k!} - 1 = e^{ecbt} - 1.\]

    One can choose $C$ large enough so that $\Expected{|\costballll{0}{t}|} \le e^{Ct}$. To see why, note that we can freely assume $c_1 \ge 1$. Then, setting $C = ecb c_1$ suffices. That is because of the following. Let $f(x) = x^{c_1} + c_1(1 - x) - 1$. This function is decreasing from 0 to 1 and increasing afterwards. Moreover, both $f(0)$ and $f(1)$ are non-negative, hence it is non-negative for all $x \ge 0$. Setting $x = e^{ecbt}$ shows that for all $t \ge 0$,
    \[\Expected{|\costballll{0}{t}|} \le 1 + c_1(e^{ecbt} - 1) \le (e^{ecbt})^{c_1} =     e^{Ct}.\]
\end{proof}

In the following, we define 
\begin{align*}
 g(t) \coloneqq \Expected{|\costballll{0}{t}|}   
\end{align*}
and note that we have already shown that $g(t)$ grows at most exponentially. But we can do better and show that in fact it grows at most \emph{stretched} exponentially, in particular roughly as $\exp(t^{\frac{1}{\bigDelta}})$. This intuitively corresponds to the cost-distances between two vertices $u$ and $v$ growing roughly as $(\log{|u-v|})^{\bigDelta}$, and is then also used in proving the corresponding lower bound later.

To show this improved bound, we bound the crucial quantity 
\[f(\distsymbol, t) = \sup_{|u|=\distsymbol} \Pr{\graphdistcost{0}{u} \leq t} \in [0, 1],\]
that is, the highest possible probability with which a vertex connects to the origin with cost at most $t$, given that it has geometric distance $\distsymbol$. We only consider $\distsymbol, t > 0$. One can show the following bound for $f(\distsymbol, t)$.

\begin{lemma}[Towards a Self-Bounding Inequality for $g(t)$]\label{lem:fkt}
    Consider \emph{\CFFPP{}} with $2\alphaparam < \tauparam - 1$. There exist constants $c_f, \delta > 0$ depending only on $\alphaparam$ and $\tauparam$ such that
    \[f(\distsymbol, t) \le c_f\distsymbol^{-\alphaparam d}h(t),\]
    where
    \[h(t) := t^{\alpha d} \int_{0}^{t} g(t - y)(g(y) - 1)dy + e^{-\delta t}.\]
\end{lemma}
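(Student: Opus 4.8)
The plan is to bound $f(\distsymbol,t)$ by conditioning on the structure of a cost-optimal path from $0$ to a vertex $u$ with $|u|=\distsymbol$. The idea mirrors the graph-distance argument: on any finite path realizing cost $\le t$, consider the edge of largest geometric length. Since the geometric distances along the path must sum to at least $\distsymbol$, if the path has $k$ edges, the longest one has geometric length at least $\distsymbol/k$. I would split into two regimes depending on how many edges the optimal path uses.

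First I would handle the case where the path uses \emph{many} edges, say $k \ge$ some threshold growing with $t$ (e.g.\ $k \ge \log^2 t$ or $k\ge \eps t$). Here Lemma~\ref{lem:passagetimeprobbound} is the workhorse: summing $(ect/k)^k \prod |u_i-u_{i+1}|^{-\alpha d}$ over all such long paths, and using the same path-counting bound (Lemma 2.5(c) of~\cite{completeFPP}) that was used in Theorem~\ref{thm:expectedballexpo} to collapse $\sum_{\text{paths}} \prod |u_i-u_{i+1}|^{-\alpha d} \le b^k |u|^{-\alpha d}$, gives a contribution of the form $\distsymbol^{-\alpha d}\sum_{k\ge \text{thresh}} (ecbt/k)^k$. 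The tail of this series is at most $e^{-\delta t}$ for a suitable $\delta>0$ once the threshold is a sufficiently large multiple of $t$ (the terms decay super-geometrically past $k = ecbt$), which produces the $e^{-\delta t}$ summand of $h(t)$.

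For the case of \emph{few} edges, $k$ below the threshold, I would use the longest-edge decomposition together with the BK inequality (Theorem~\ref{thm:BK}) exactly as in Lemma~\ref{lem:mainSFPlemma}: the path splits into a sub-path from $0$ to an endpoint $a$ of the longest edge with cost $\le y$, the longest edge itself (cost drawn from its exponential rate, contributing a factor $\le w_a^\alpha w_b^\alpha |a-b|^{-\alpha d}$ for the probability that its cost is at most the remaining budget), and a sub-path from the other endpoint $b$ to $u$ with cost $\le t-y$. Summing (integrating) over the positions and weights of $a,b$ and over the split point $y\in[0,t]$ turns the two sub-path probabilities into the factors $g(y)$ and $g(t-y)$ (after integrating weights out, which converges since $2\alpha<\tau-1$, and after absorbing the $|a-b|^{-\alpha d}$ factor against the $\Theta(\distsymbol^{d-1})$ vertices at each radius, leaving the overall $\distsymbol^{-\alpha d}$); the factor $t^{\alpha d}$ comes from the crude bound $|a-b| \ge \distsymbol/k \ge \distsymbol/t^{O(1)}$ on the longest edge (or, with the cruder threshold $k\le t$, exactly $\distsymbol/t$). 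One subtlety is the ``$-1$'' in $(g(y)-1)$: it reflects that one of the two sub-paths, say the one starting at $b\ne u$, genuinely uses at least one edge, so its contribution is $g(t-y)-1$ (the empty path is excluded) — bookkeeping I would carry through carefully but which costs nothing. Collecting the two regimes gives $f(\distsymbol,t)\le c_f \distsymbol^{-\alpha d}\big(t^{\alpha d}\int_0^t g(t-y)(g(y)-1)\,dy + e^{-\delta t}\big)$.

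The main obstacle I anticipate is making the longest-edge decomposition rigorous in the continuous setting: unlike graph distance, where one union-bounds over the finitely many possible values of $i$ (hops before the long edge), here the cost split point $y$ ranges over a continuum, so one needs a genuine integral identity (or inequality) rather than a finite sum. This is precisely the ``self-bounding inequality'' the authors flag as the key new ingredient; I would obtain it by disintegrating over the cost of the longest edge, using that conditionally on its endpoints it is exponential, and being careful that the weights of the two endpoints of the longest edge are integrated out \emph{after} the BK step and that the resulting weight integrals converge under $2\alpha<\tau-1$ — the same integrability threshold that appears in Lemma~\ref{lem:passagetimeprobbound}, which is why this condition (rather than $\alpha<\tau-2$) governs the FPP result.
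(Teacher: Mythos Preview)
Your overall architecture---split into paths with $\ge \rho t$ edges versus $< \rho t$ edges, dispose of the former via Lemma~\ref{lem:passagetimeprobbound} plus the path-counting bound from~\cite{completeFPP} to produce the $e^{-\delta t}$ term, and handle the latter by the longest-edge decomposition---matches the paper exactly. (The threshold must be a constant multiple $\rho t$ with $\rho$ large; your first guess $\log^2 t$ would not suffice.)

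The genuine gap is in how you pass from sub-path probabilities to $g(\cdot)$. After fixing the endpoints $a,b$ of the long edge you must condition on their weights $w_a,w_b$ to control the edge's rate; the sub-path probability then becomes $\Pr{\graphdistcost{0}{a}\le s \mid w_a}$, not the unconditional version. Summing the \emph{unconditional} probabilities over $a$ gives $g(s)$, but summing the conditional ones does not, and no BK step removes this dependence. The paper closes this with a separate coupling argument, Proposition~\ref{prop:condweightrecpath}, showing $\Pr{\graphdistcost{0}{a}\le s \mid w_a}\le 2w_a^{\alpha}\Pr{\graphdistcost{0}{a}\le s}$. Only after this do you get a total factor $w_a^{2\alpha}$ (one $w_a^{\alpha}$ from the edge rate, one from the proposition) that can be integrated against $d\mu(w_a)$---and \emph{this} is where $2\alpha<\tau-1$ actually enters. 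You name the right condition but miss the mechanism that produces the exponent $2\alpha$. Two smaller points: your account of how $r^{-\alpha d}$ emerges (``absorbing $|a-b|^{-\alpha d}$ against $\Theta(r^{d-1})$ vertices at each radius'') is borrowed from the graph-distance proof and is not what happens here---one simply bounds $|a-b|^{-\alpha d}\le (\rho t)^{\alpha d} r^{-\alpha d}$, and the position sums over $a,b$ go entirely into $g$; and the paper does not invoke BK in this lemma but uses that the long edge's cost, conditionally on weights, is independent with density bounded by its rate.
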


This lemma can be seen as a generalization of the technique we use to prove \Cref{lem:mainSFPlemma}: We bound the probability that two vertices at distance $r$ are connected by a path of cost at most $t$ by a term that is essentially $r^{-\alpha d}$ (which is roughly the probability that the longest edge in such a path exists) times $h(t)$ which integrates over all possible $y$ such that said edge connects the $(t-y)$-ball around $0$ and the $y$-ball around $u$. 
Using this, we can then derive a \emph{self-bounding inequality} for $g(t)$, which relates $g$ recursively to itself such that we can derive an upper bound on $g$ by solving said recursive relation using \Cref{thm:solvetheselfboundinginequalitysomehowmagicallywedontunderstandhowthisworksandrelyontheotherpaperhere} which is identical to \cite[Theorem 5.3]{completeFPP}. We derive the self-bounding inequality by summing $f(r, t)$ over all vertices and thus express $g(t)$ as a function of $h(t)$, which -- in turn -- depends on $g$. We capture this in the following lemma.

\begin{lemma}[Self-Bounding Inequality for $g(t)$]\label{lem:selfboundineq}
    Consider \CFFPP{} with $2\alpha \le \tau - 1$. There are constants $c, \delta$ such that for all $t \ge 0$ \begin{align*}
        g(t)^\alpha \le c \left(  t^{\alpha d} \int_{0}^{t} g(t - y)g(y)dy + 1\right).
    \end{align*}
\end{lemma}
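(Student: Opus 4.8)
The plan is to obtain the self-bounding inequality by summing the pointwise estimate of \Cref{lem:fkt} over all lattice points, exactly as announced just before the lemma statement. I would start from
\begin{align*}
g(t) = \Expected{|\costballll{0}{t}|} = \sum_{u \in \ZZ^d} \Pr{\graphdistcost{0}{u} \le t}.
\end{align*}
For $u = 0$ the summand is $1$; for $u \neq 0$ I would use $\Pr{\graphdistcost{0}{u} \le t} \le f(|u|, t) \le c_f |u|^{-\alphaparam d} h(t)$ from \Cref{lem:fkt}, while always keeping in mind that a probability is at most $1$. This gives $g(t) \le 1 + \sum_{u \neq 0} \min\{1,\, c_f |u|^{-\alphaparam d} h(t)\}$.

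The crucial step is evaluating this truncated sum, and this is where the exponent $\alphaparam$ on the left-hand side of the lemma is born. Comparing the sum to the integral $\int_{\RR^d} \min\{1,\, c_f |x|^{-\alphaparam d} h(t)\}\, dx$ (at the cost of a constant factor), the integrand is radially symmetric and piecewise polynomial in $|x|$, switching from the constant branch $1$ to the branch $c_f |x|^{-\alphaparam d} h(t)$ at the radius $\hat{r} = (c_f h(t))^{1/(\alphaparam d)}$. By the same integration trick used in the proof of \Cref{lem:mainSFPlemma}, such an integral is of the same order as $\hat{r}^{d}$; here the hypothesis $\alphaparam > 1$ is essential, since it makes $|x|^{d-1-\alphaparam d}$ integrable near infinity so that the outer tail does not dominate the value at the kink. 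Since $\hat{r}^{d} = (c_f h(t))^{1/\alphaparam}$, and using $\sum_{u \neq 0} |u|^{-\alphaparam d} < \infty$ (again by $\alphaparam > 1$) to cover the regime $c_f h(t) < 1$, I would obtain a constant $C$ with $g(t) \le C(1 + (c_f h(t))^{1/\alphaparam})$.

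From here the proof is bookkeeping. Raising both sides to the power $\alphaparam$ and using $(a+b)^{\alphaparam} \le 2^{\alphaparam}(a^{\alphaparam} + b^{\alphaparam})$ for $a, b \ge 0$ gives $g(t)^{\alphaparam} \le C'(1 + h(t))$ for a new constant $C'$. It remains to unpack $h$: since $g \ge 1$ we have $g(y) - 1 \le g(y)$, and since $e^{-\delta t} \le 1$, we get $h(t) \le t^{\alphaparam d} \int_0^t g(t-y) g(y)\, dy + 1$; substituting this and enlarging the constant once more yields
\begin{align*}
g(t)^{\alphaparam} \le c\left(t^{\alphaparam d} \int_0^t g(t-y) g(y)\, dy + 1\right),
\end{align*}
as claimed. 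Everything is well-defined because $g$ is non-decreasing and finite — at most $e^{Ct}$ by \Cref{thm:expectedballexpo} — so $h(t)$ and the integral make sense.

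The main obstacle I anticipate is precisely the sum/integral estimate: one must check that the $\ZZ^d$-sum really is comparable to the continuous integral uniformly in $t$ (in particular for small $t$, where $\hat{r} < 1$ and no nonzero lattice point lies inside radius $\hat{r}$), and that the truncation at $\hat{r}$ is carried out so that the exponent comes out as $1/\alphaparam$ rather than, say, $1$. This $1/\alphaparam$ factor is exactly what upgrades a linear self-bound $g \lesssim (\cdots)$ — which would only reprove $g(t) = e^{\Theta(t)}$, i.e.\ \Cref{thm:expectedballexpo} — into the genuinely sublinear bound $g(t)^{\alphaparam} \lesssim (\cdots)$ needed to eventually conclude the stretched-exponential growth $g(t) = e^{t^{1/\bigDelta \pm o(1)}}$.
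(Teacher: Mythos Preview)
Your proposal is correct and follows essentially the same route as the paper: sum \Cref{lem:fkt} over $\ZZ^d$, split the resulting radial integral at $\hat r = (c_f h(t))^{1/(\alpha d)}$ to extract $h(t)^{1/\alpha}$, raise to the power $\alpha$ via a convexity inequality, and finally replace $h(t)$ by the cruder upper bound $t^{\alpha d}\int_0^t g(t-y)g(y)\,dy + 1$. The only cosmetic difference is that the paper first isolates $(g(t)-1)^\alpha \le c'' h(t)$ and then uses $g(t)^\alpha \le 2^{\alpha-1}\big(1 + (g(t)-1)^\alpha\big)$, whereas you apply $(a+b)^\alpha \le 2^\alpha(a^\alpha + b^\alpha)$ directly; your explicit treatment of the regime $c_f h(t) < 1$ is a nice touch that the paper handles only implicitly via ``$h(t)$ is bounded away from zero''.
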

\begin{proof}
To derive the self-bounding inequality for $g$ using \Cref{lem:fkt}, we upper bound $g$ by an expression involving $f(r,t)$ and then upper bound $f(r,t)$ using \Cref{lem:fkt}. Specifically, we estimate the expected size of a $t$-ball by integrating over all vertices times the respective probability $f(r,t)$. 
\begin{align*}
    \Expected{\costballll{0}{t}} = g(t) &\leq 
    1 + \int_{1}^{\infty} c_dr^{d-1} \min\{1, f(r,t)\} \d r\\
    &= 1 + \int_{1}^{(c_{f} h(t))^{\frac{1}{\alpha d}}  } c_d r^{d-1} \d r + \int_{(c_{f} h(t))^{\frac{1}{\alpha d}}  }^\infty c_dc_f r^{d-1-\alpha d} h(t) \d r
\end{align*} since for $r > (c_{f} h(t))^{\frac{1}{\alpha d}}$, the minimum is smaller than $1$ by definition of $f(r, t)$ from \Cref{lem:fkt}. Integrating out then yields, \begin{align*}
   g(t) &\leq1 +  c' \left( h(t)^{\frac{1}{\alpha}} + h(t)^{\frac{1}{\alpha} - 1}h(t) \right) \le 1 + c''h(t)^{\frac{1}{\alpha}}
\end{align*} for some constants $c', c''$ that depend on $\alphaparam, d$ and $\tauparam$. Therefore, we infer that \begin{equation}\label{eq:selfboundingineq}
(g(t) - 1)^{\alphaparam} \le c'' h(t).
\end{equation}
It can be shown that\footnote{One simply needs to consider the function $f(x) = 2^{\alphaparam - 1}(1 + (x-1)^{\alphaparam}) - x^{\alphaparam}$ restricted to $x 
    \ge 1$, which has a global minimum of 0 at $x = 2$.} $g(t)^{\alphaparam} \le 2^{\alphaparam - 1}(1 + (g(t) - 1)^{\alphaparam})$. Chaining this inequality with \eqref{eq:selfboundingineq} and replacing $h(t)$ above by its definition in \Cref{lem:fkt} we get the claimed recursive inequality for $g(t)$. In more detail, we have
    \[g(t)^{\alphaparam} \le 2^{\alphaparam - 1}(1 + (g(t) - 1)^{\alphaparam}) \le 2^{\alphaparam - 1}(1 + c''h(t))\]
    Since $h(t)$ is bounded away from zero and since $e^{-\delta t} \le 1$, it follows that there exists a $c$ such that the inequality claimed in the lemma statement holds for all $t$.

\end{proof}

It is through this inequality that a stronger bound on $g(t)$ can be derived. For this, we use Theorem 5.3 from \cite{completeFPP} directly which we restate as \Cref{thm:solvetheselfboundinginequalitysomehowmagicallywedontunderstandhowthisworksandrelyontheotherpaperhere}. It claims (among more general things) that for a given function $g(t)$, if $1 \le g(t) \le e^{Ct}$ for some constant $C$ (which we have already shown) and an inequality similar to \ref{eq:selfboundingineq} holds, then one \emph{roughly} has $g(t) \le e^{t^{1/\bigDelta}}$. Now that we have motivated Lemma \ref{lem:fkt}, let us prove it.

\begin{proof}[Proof of Lemma \ref{lem:fkt}.]
    Fix a ``target'' vertex $u$. Let us first focus on paths from $0$ to $u$ which contain at least $\rho t$ edges (we assume that this is an integer for simplicity) for some constant $\rho$ that will be determined later. If $P_{\text{long}}$ is the probability that some such path has cost-distance less than $t$, then by a simple union bound we have 
    \begin{align*}
    P_{\text{long}} & \leq \sum_{k=\rho t}^{\infty} \hspace{.5cm} \sum_{\substack{(0, u)-\text{path $\pi$}\\ \text{of length $k$}}} \Pr{\pi \text{ has cost-distance at most }t}\\
    &\overset{\text{Lemma \ref{lem:passagetimeprobbound}}}{\le} 
    e^{\theta t} \sum\limits_{k = \rho t}^{\infty} \theta^{-k} c^k \sum_{\substack{(0, u)-\text{path} \\ (u_1 = 0, u_2, \dots, u_{k+1} = u)}} \prod\limits_{i=1}^{k} |u_i - u_{i + 1}|^{-\alphaparam d}\\
    &\overset{\text{Lemma 2.5 (c) in \cite{completeFPP}}}{\le}
    e^{\theta t} |u|^{-\alphaparam d} \sum\limits_{k = \rho t}^{\infty} \left(\frac{cb}{\theta}\right)^k.
    \end{align*}

    In the second line above, we used Lemma \ref{lem:passagetimeprobbound} but the final step where $\theta$ is set to some value is not carried out. Furthermore, the constant $b$ that emerges in the third line is as in Lemma 2.5 (c) of \cite{completeFPP}. Now, setting $\theta = \rho > ecb$, we have 
    \begin{align}\label{eq:Plong}
    P_{\text{long}} \le |u|^{-\alphaparam d} e^{\rho t} 
    \frac{\left(\frac{cb}{\rho}\right)^{\rho t}}{1 - \frac{cb}{\rho}} \le |u|^{-\alphaparam d} \frac{e}{e - 1} e^{-\rho t \log{\frac{\rho}{ecb}}}.\end{align}

    Now, let us turn our attention to paths that use at most $\rho t$ edges instead and let $P_{\text{short}}$ denote the probability that such a path has cost-distance at most $t$. The idea here is to notice that a geometrically long edge $(u_1, u_2)$ must be used (similarly as in the proof of \Cref{lem:mainSFPlemma}). In particular, this edge has to cover a distance of at least $\frac{|u|}{\rho t}$, as there are at most $\rho t$ edges used to cover a distance of $|u|$. We adapt the argumentation of the proof of Lemma 5.1 in \cite{completeFPP}, which is essentially a union bound over all the possible intermediate pairs $(u_1, u_2)$. More precisely, we get that
    \begin{align}\label{eq:integrals}
        P_{\text{short}} &\le \sum_{\substack{u_1,u_2 \in \ZZ^d \\ |u_1 - u_2| \ge |u| / (\rho t) }} \Pr{ \graphdistcost{0}{u_1} + c_{(u_1, u_2)} + \graphdistcost{u_2}{u} \le t } \nonumber \\
        & \le \sum_{\substack{u_1,u_2 \in \ZZ^d \\ |u_1 - u_2| \ge |u| / (\rho t) }} \iint \Pr{ \graphdistcost{0}{u_1} + c_{(u_1, u_2)} + \graphdistcost{u_2}{u} \le t \mid w_{u_1}, w_{u_2} } \d \mu(w_{u_1}) \d \mu(w_{u_2})
    \end{align} where $\mu(w) = w^{1-\tau}$ is the probability measure of the weight distribution and 
    \begin{align}\label{eq:graphdistcostwithweights}
        &\Pr{ \graphdistcost{0}{u_1} + c_{(u_1, u_2)} + \graphdistcost{u_2}{u} \le t \mid w_{u_1}, w_{u_2} } \nonumber\\
        &\hspace{.1cm}\le \int_{0}^{t} \d \Pr{\graphdistcost{0}{u_1} \le s \mid w_{u_1}}\int_{0}^{t-s} \Pr{\graphdistcost{u_2}{u} \le y \mid w_{u_2}} w_{u_1}^\alpha w_{u_2}^\alpha |u|^{-d\alpha}(\rho t)^{\alpha d} \d y. 
    \end{align} 
    where we use a convolution over the cost of the left and right path segment and that the density of $c_{(u_1,u_2)}$ is at most $w_{u_1}^\alpha w_{u_2}^\alpha |u|^{-d\alpha} (\rho t)^{d\alpha}$. Had there not been weights involved, the proof of Lemma 5.1 in \cite{completeFPP} would show immediately that 
    \[P_{\text{short}} \le c_{\text{short}} |u|^{- \alphaparam d} h(t)\] 
    for some constant $c_{\text{short}}$ depending on $\alphaparam$. In our case we first need to get rid of the weights. To this end, we use the following simple proposition whose proof we defer for now.

    \begin{prop}\label{prop:condweightrecpath} For any $u_1 \in \ZZ^d$ and any $t > 0$, we have
        \[\Pr{\graphdistcost{0}{u_1} \le t \mid w_1} \le 2w_1^{\alphaparam}\Pr{\graphdistcost{0}{u_1} \le t}.\]
    \end{prop}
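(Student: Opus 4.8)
The plan is to exploit that conditioning on the weight of $u_1$ affects only the costs of edges incident to $u_1$, where it rescales the exponential rates by exactly the factor $w_{u_1}^{\alphaparam}$ (here $w_{u_1}$ is the weight written $w_1$ in the statement). First I would expose the relevant structure of \CFFPP{}: writing $D_v$ for the cost-distance from $0$ to $v$ in the graph obtained from $\ZZ^d$ by deleting the vertex $u_1$ — a quantity that depends neither on $w_{u_1}$ nor on the costs of edges touching $u_1$ — and noting that an optimal (approximating) path from $0$ to $u_1$ touches $u_1$ only at its last endpoint, one obtains the representation
\[
\graphdistcost{0}{u_1} \;=\; \inf_{v \neq u_1}\bigl(D_v + c_{\{v,u_1\}}\bigr),
\]
where the $c_{\{v,u_1\}}$ are independent with $c_{\{v,u_1\}} \sim \mathrm{Exp}\bigl(w_{u_1}^{\alphaparam} w_v^{\alphaparam}|v-u_1|^{-\alphaparam d}\bigr)$.

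Next I would condition on the $\sigma$-algebra $\mathcal F'$ generated by all vertex weights except $w_{u_1}$ together with all edge costs except those touching $u_1$. Given $\mathcal F'$, the $D_v$ are fixed and the $c_{\{v,u_1\}}$ remain independent exponentials, so computing the product of their survival functions and letting the threshold decrease to $t$ yields, for every fixed $w \ge 1$,
\[
\Pr{\graphdistcost{0}{u_1} \le t \mid \mathcal F',\, w_{u_1} = w} \;=\; 1 - e^{-w^{\alphaparam} S}, \qquad S := \sum_{v \neq u_1} w_v^{\alphaparam}\,|v-u_1|^{-\alphaparam d}\,\max\{t-D_v,\,0\}.
\]
The crucial observation is that $S$ is $\mathcal F'$-measurable — in particular independent of $w_{u_1}$ — and finite almost surely, since $\sum_v |v-u_1|^{-\alphaparam d} < \infty$ (as $\alphaparam > 1$) and $\mathbb{E}[w_v^{\alphaparam}] < \infty$ (as $\alphaparam < \tauparam - 1$). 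Integrating out $\mathcal F'$ therefore gives
\[
\Pr{\graphdistcost{0}{u_1} \le t \mid w_{u_1} = w} = \mathbb{E}\bigl[1 - e^{-w^{\alphaparam} S}\bigr],
\qquad
\Pr{\graphdistcost{0}{u_1} \le t} = \mathbb{E}\bigl[1 - e^{-W^{\alphaparam} S}\bigr],
\]
where in the second identity $W$ denotes a Pareto-distributed weight independent of $S$.

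It then remains only to compare these two expectations, which is elementary. From $1 - e^{-x} \le \min\{1,x\}$ and the pointwise bound $\min\{1, ax\} \le a\min\{1,x\}$ valid for $a = w^{\alphaparam} \ge 1$, one gets $\mathbb{E}[1 - e^{-w^{\alphaparam}S}] \le w^{\alphaparam}\,\mathbb{E}[\min\{1,S\}]$. From $1 - e^{-x} \ge \tfrac12\min\{1,x\}$ together with $W \ge 1$, one gets $\mathbb{E}[1 - e^{-W^{\alphaparam}S}] \ge \tfrac12\,\mathbb{E}[\min\{1,S\}]$. Chaining the two bounds yields $\Pr{\graphdistcost{0}{u_1} \le t \mid w_{u_1} = w} \le 2 w^{\alphaparam}\,\Pr{\graphdistcost{0}{u_1}\le t}$, which is the claim.

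I expect the main obstacle to be purely organizational: cleanly justifying the decomposition $\graphdistcost{0}{u_1} = \inf_{v\neq u_1}(D_v + c_{\{v,u_1\}})$ (since the infimum need not be attained, this is best argued by truncating approximating paths at their first visit to $u_1$ and then letting the threshold decrease to $t$) and verifying that conditioning on $\mathcal F'$ really does leave the $c_{\{v,u_1\}}$ as the only randomness relevant to the event. Once the representation $1 - e^{-w^{\alphaparam}S}$ with a common, weight-independent $S$ is in place, the proposition follows from the three one-line inequalities above.
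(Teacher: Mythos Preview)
Your proof is correct, and it takes a genuinely different route from the paper's.

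The paper argues by an explicit coupling: it replaces $u_1$ by a vertex of deterministic weight $1$ whose incident edge costs are each the \emph{minimum} of $\lceil w_1^{\alphaparam}\rceil$ independent samples. Since the minimum of $\lceil w_1^{\alphaparam}\rceil$ rate-$w_v^{\alphaparam}|u_1-v|^{-\alphaparam d}$ exponentials has rate $\lceil w_1^{\alphaparam}\rceil\, w_v^{\alphaparam}|u_1-v|^{-\alphaparam d} \ge w_1^{\alphaparam}w_v^{\alphaparam}|u_1-v|^{-\alphaparam d}$, this auxiliary model has stochastically smaller cost-distance to $u_1$ than the conditioned model. A union bound over the $\lceil w_1^{\alphaparam}\rceil \le 2w_1^{\alphaparam}$ copies then relates the auxiliary model to the unconditioned one (with weight at least $1$).

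Your approach instead isolates the role of $w_{u_1}$ analytically: you write $\Pr{\graphdistcost{0}{u_1} \le t \mid \mathcal F', w_{u_1}=w} = 1 - e^{-w^{\alphaparam}S}$ for a $w$-independent random variable $S$, and then compare $\mathbb E[1-e^{-w^{\alphaparam}S}]$ with $\mathbb E[1-e^{-W^{\alphaparam}S}]$ via the sandwich $\tfrac12\min\{1,x\}\le 1-e^{-x}\le \min\{1,x\}$. This is arguably cleaner: it makes completely transparent that the weight enters only as a multiplicative scaling of a single exponential rate, and it avoids introducing any auxiliary model. The paper's coupling, on the other hand, is more pictorial and requires no computation of conditional distributions. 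It is a nice coincidence that both arguments produce exactly the same constant $2$ --- yours from the inequality $1-e^{-x}\ge \tfrac12\min\{1,x\}$, the paper's from the rounding $\lceil w_1^{\alphaparam}\rceil \le 2w_1^{\alphaparam}$.
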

    Applying this proposition to \eqref{eq:graphdistcostwithweights}, then substituting into \eqref{eq:integrals}, taking the sum inside the integrals, and rearranging the weights, we then get \begin{align*}
        P_{\text{short}} &\le \left( \int_{1}^\infty \int_{1}^\infty 4 w_{u_1}^{2\alpha-\tau}w_{u_2}^{2\alpha-\tau} \d w_{u_1} \d w_{u_2} \right) \left( |u|^{-d\alpha}(\rho t)^{\alpha d} \int_{0}^{t} \d g(s) \int_{0}^{t-s} g(y)  \d y \right).
    \end{align*} Here, it is easy to see that the term in the second set of parentheses is at most $c_{\text{short}} |u|^{- \alphaparam d} h(t)$ for some constant $c_{\text{short}}$ as was formally shown in \cite[Lemma 5.1]{completeFPP}. The term in the first set of parentheses is some constant that only depends on $\alpha, \tau$ since the condition $2\alphaparam < \tau - 1$ ensures that the integrals converge. This constant enters into $c_{\text{short}}$.

    If we now choose $\rho > ecb$ and $\delta = \rho \log \frac{\rho}{ecb}$, then summing $P_{\text{long}}$ from \eqref{eq:Plong} and $P_{\text{short}}$ as above yields \begin{align*}
        f(\distsymbol, t) \le c_f\distsymbol^{-\alphaparam d}h(t)
    \end{align*} for some constant $c_f$ as desired.
\end{proof}

We now give the proof of the proposition deferred above.
\begin{proof}[Proof of Proposition \ref{prop:condweightrecpath}]
    Recall that the statement of the proposition is that 
    \[\Pr{\graphdistcost{0}{u_1} \le t' \mid w_1} \le 2w_1^{\alphaparam}\Pr{\graphdistcost{0}{u_1} \le t'}\] 

    We can show this by a relatively simple coupling argument. Consider a model $M$ where instead of having the vertex $u_1$ with weight $w_1$, we have a vertex $u_1'$ with weight \emph{deterministically} equal to 1. However, to decide the costs of edges adjacent to $u_1'$, we take the minimum of $\lceil w_1^{\alphaparam} \rceil$ many independent costs for each edge (each sample is exactly as it would be in \CFFPP{}, given that the weight of $u_1'$ is 1). This can also be thought of as having $\lceil w_1^{\alphaparam} \rceil$ different copies of a weight $1$ vertex and always choosing the one with minimum cost-distance when finding a path from $0$ to $u'_1$. Note that this description diverges from the one in the previous sentence if we are interested in paths between any two arbitrary vertices, but since here we are concerned only with paths from $0$ to $u'_1$, the two are equivalent. Let $e$ be an arbitrary edge adjacent to $u_1$ and $e'$ the corresponding edge of $u_1'$, where $v$ is the other vertex in both cases with weight $w_v$. One notices that
    \begin{align*}
    \Pr{c_{e'} \ge t'} &  = \left(e^{-t'w_v^{\alphaparam} |u_1' - v|^{-\alphaparam d}}\right)^{\lceil w_1^{\alphaparam} \rceil}  \le e^{-t'w_1^{\alphaparam} w_v^{\alphaparam} |u_1 - v|^{-\alphaparam d}} 
    = \Pr{c_e \ge t'}.
    \end{align*}

    We have thus far shown that 
    \[\Pr{\graphdistcost{u_1}{0} \le t' \text{ in \CFFPP{}} \mid w_1} \le \Pr{\graphdistcost{u_1'}{0} \le t' \text{ in $M$}}.\] 
    This is because for any fixed realization of the weights of other vertices and costs of edges \emph{not} adjacent to $u_1$ or $u_1'$ (all this randomness is identical between the two models), the models can be coupled so that if a path of low cost exists in \CFFPP{}, then it also exists in $M$, since the edge costs out of $u_1'$ are pointwise smaller than those of $u_1$.

    It remains to show that 
    \[\Pr{\graphdistcost{u_1'}{0}) \le t' \text{ in $M$}} \le 2w_1^\alphaparam\Pr{\graphdistcost{u_1}{0} \le t' \text{ in \CFFPP{}}}.\] 
    To see this, consider a fixed realization $\mathcal{R}$ (it suffices to show the claim under an arbitrary such $\mathcal{R}$ by the law of total probability) of the vertex weights and edge costs not adjacent to $u_1'$ (or $u_1$ equivalently), which is coupled to be the same for both \CFFPP{} and $M$. This realization defines a per-edge ``goal'' cost that is required for $u_1'$ to connect to $0$ with cost at most $t'$. In more detail, we have
    \[\graphdistcost{u_1'}{0} \le t' \iff \exists \; v \text{ such that } c_{(u_1', v)} \le  t' - \left[\graphdistcost{0}{v}\right]_{\mathcal{R}} \]\
    where the cost-distance from $0$ to $v$ is computed only using edges and costs from the realization $\mathcal{R}$.
    For example, for the edge $(0, u_1')$, this goal is $t'$ itself, since $\graphdistcost{0}{0} = 0$ under any realization. Note that by the definition of $M$, if $u_1'$ achieves some such goal cost, it is implied that at least one of the $\lceil w_1^{\alphaparam} \rceil \le 2 w_1^\alphaparam$ copies with weight 1 achieved a goal cost as well. That is,
    \[\Pr{\graphdistcost{u_1'}{0} \le t' \text{ in } M} \le 2w_1^{\alpha}\Pr{\graphdistcost{u_1}{0} \le t' \text{ in \CFFPP{}} \mid w_{u_1} = 1}\]
    \[\le 2w_1^\alphaparam\Pr{\graphdistcost{u_1}{0} \le t' \text{ in \CFFPP{}}}\]
\end{proof}

Finally, we will use Lemma \ref{lem:fkt} in conjunction with Theorem 5.3 from \cite{completeFPP} to show the desired lower bound on cost-distances. We restate that theorem here, simplified for our use case.

\begin{thm}[Theorem 5.3 from \cite{completeFPP}] \label{thm:solvetheselfboundinginequalitysomehowmagicallywedontunderstandhowthisworksandrelyontheotherpaperhere}
    Let $g(t) : [0, \infty) \to \mathbb{R}$ be a function satisfying
    \[1 \le g(t) \le e^{C t}\]
    and
    \[g(t)^{1/\theta} \leq c_h \left( 1 + t^{\beta - 1} \int_{0}^{t} g(y)g(t - y) \, dy \right)\]
    for all $t \ge 0$ for some constants $C > 0$, $\theta \in \left(\frac{1}{2}, 1\right)$, $\beta \ge 0$, and $c_h \ge 1$. Then, there exists a constant $c_{\theta} > 1$ such that $g(t) \le G(t)$ for all $t \ge 0$ where $G(t)$ is defined such that
    \[\log{G(t)} = c_{\theta}(2\lambda t)^{\log_2(2\theta)} (\log(1 + t^{\beta}))^{\log_2(1/\theta)} (1 + o(1)).\]
\end{thm}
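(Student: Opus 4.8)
The plan is to turn the integral hypothesis into a recursion over dyadic scales and then iterate it, starting from the crude bound $g(t)\le e^{Ct}$ and improving it pass by pass. The engine is the elementary fact that, whenever $\log g$ is dominated by a concave increasing function $\psi$, concavity gives $\psi(y)+\psi(t-y)\le 2\psi(t/2)$ and hence
\[
\int_0^t g(y)\,g(t-y)\,dy \;\le\; t\,e^{2\psi(t/2)}.
\]
Substituting this into $g(t)^{1/\theta}\le c_h\bigl(1+t^{\beta-1}\int_0^t g(y)g(t-y)\,dy\bigr)$ and taking logarithms yields, for all sufficiently large $t$,
\[
\log g(t)\;\le\;\theta\beta\log t + 2\theta\,\psi(t/2) + \theta\log(2c_h)\;=:\;\Lambda[\psi](t).
\]
Since $\theta>1/2$ we have $2\theta>1$, so the homogeneous part $\psi\mapsto 2\theta\,\psi(\cdot/2)$ of the operator $\Lambda$ has the self-similar eigenprofile $t\mapsto t^{\log_2(2\theta)}$, which is precisely the exponent appearing in $\log G$.

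I would then set $\psi_0(t):=Ct$ (linear, hence concave) and $\psi_{m+1}:=\Lambda[\psi_m]$. By the hypothesis $g(t)\le e^{\psi_0(t)}$, and inductively $g(t)\le e^{\psi_m(t)}$ for every $m\ge 0$ and (large) $t$; each $\psi_m$ is linear-plus-logarithmic and hence concave, so the convolution bound applies at every pass. Unrolling the recursion yields
\[
\psi_m(t)\;\lesssim\;\theta^m C\,t \;+\; \Bigl(\textstyle\sum_{j=0}^{m-1}(2\theta)^j\Bigr)\bigl(\theta\beta\log t + O(1)\bigr),
\]
and, because $2\theta>1$, the two summands behave like $\theta^m Ct$ (decreasing in $m$) and like $(2\theta)^m\log t$ (increasing in $m$).

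Since $g(t)\le e^{\psi_m(t)}$ holds for \emph{every} $m$, at each fixed scale $t$ we may take the pass $m^\ast=m^\ast(t)$ that minimises the right-hand side. Balancing the two summands forces $2^{m^\ast}\asymp t/\log t$, i.e.\ $m^\ast(t)\asymp\log_2 t$, and substituting back, using $\theta^{\log_2 t}=t^{\log_2\theta}$, $(2\theta)^{\log_2 t}=t^{\log_2(2\theta)}$ and the identity $1-\log_2(2\theta)=\log_2(1/\theta)$,
\[
\log g(t)\;\le\;\psi_{m^\ast(t)}(t)\;\le\; c_\theta\, t^{\log_2(2\theta)}\,(\log t)^{\log_2(1/\theta)}\,(1+o(1)).
\]
Since $\log(1+t^\beta)\sim\beta\log t$ as $t\to\infty$, this is $\log G(t)$ up to renaming constants, which proves the theorem.

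The step I expect to be the main obstacle — and the reason it is cleaner here simply to invoke \cite[Theorem 5.3]{completeFPP} — is the bookkeeping hidden in the last two displays: the additive $O(1)$ terms and the fact that the logarithm arguments are really $\log(t/2^{\,\cdot})$ rather than $\log t$ must be tracked through all $\asymp\log_2 t$ levels of the unrolling; one must check that the unrolled bound is genuinely minimised at $m^\ast\asymp\log_2 t$; and one must verify that the competing contributions combine to give \emph{exactly} the log-power $\log_2(1/\theta)$ with only an $o(1)$ slack in the exponent. Two further points are routine but deserve a remark: the concavity of each $\psi_m$, needed for the convolution bound, is immediate since each is linear-plus-logarithmic; and the regime of small $t$, where the ``$+1$'' in the hypothesis or the factor $t^{\beta-1}$ may dominate, is absorbed into $c_\theta$ using $c_h\ge 1$ and $g\ge 1$.
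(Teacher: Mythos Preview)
The paper does not prove this theorem: it is quoted as Theorem~5.3 from \cite{completeFPP} and used as a black box (the authors' internal label for it is, verbatim, \texttt{solvetheselfbounding\-inequality\-somehow\-magically\-we\-dont\-understand\-how\-this\-works\-and\-rely\-on\-the\-other\-paper\-here}). There is thus no proof in the paper to compare your proposal against.

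That said, your outline is the standard route and is, in broad strokes, how the result is obtained in \cite{completeFPP}: iterate the self-bounding inequality on dyadic scales, using concavity of the running envelope $\psi_m$ to control the convolution via $\int_0^t g(y)g(t-y)\,dy\le t\,e^{2\psi_m(t/2)}$, and then optimise over the number $m$ of passes. The caveats you flag are the genuine ones. Two deserve emphasis. First, the convolution bound needs $\log g\le\psi_m$ on \emph{all} of $[0,t]$, not just for large arguments; since after $m\asymp\log_2 t$ passes the recursion reaches scale $t/2^m=O(1)$, you must arrange the envelopes to be valid globally (your ``linear-plus-log'' form has $\log t\to-\infty$ near $0$, so in practice one works with $\log(1+t)$ or adds a constant, and that constant then propagates with weight $\sum_j(2\theta)^j\asymp(2\theta)^m$, i.e.\ at the same order as the main term). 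Second, the shifts $\log(t/2^j)=\log t-j\log 2$ contribute a term of order $m(2\theta)^m$ in the unrolled sum, comparable to $(2\theta)^m\log t$ when $m\asymp\log_2 t$; this interaction is exactly what produces the secondary factor $(\log t)^{\log_2(1/\theta)}$ and cannot be discarded. Your sketch is correct, but converting it into a proof requires precisely the bookkeeping you defer, which is why the present paper simply cites \cite{completeFPP}.
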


We use the above theorem to prove our main lemma for FPP, which we restate here.

\begin{restatable}[Tail Bound for Cost-Distances in FPP{}]{lemma}{lowerboundSFFPP}\label{lem:lowerboundSFFPP}
    Consider FPP on SFP with $2\alphaparam < \tauparam - 1$ and arbitrary vertices $x, y$. There exists a constant $c$ depending only on $\alphaparam$ and $\tauparam$ such that for $\bigDelta = 1/{\log_2{(2/ \alphaparam)}}$,
    \[\log{\Pr{\graphdistcost{x}{y} \le t}} \le c(\log(1 + t))^{1-1/\bigDelta}t^{1/\bigDelta}(1 + o(1)) - \alphaparam d \log{|x - y|} + c.\]
\end{restatable}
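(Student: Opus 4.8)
The plan is to pass to \CFFPP{} and then assemble the ingredients that have already been established. By \Cref{lem:sffppcoupling}, cost-distances in \CFFPP{} stochastically lower-bound those in FPP on SFP, so it suffices to prove the claimed tail bound for \CFFPP{}. By translation invariance we may assume $x=0$, and then $\Pr{\graphdistcost{0}{y}\le t}\le f(|y|,t)$, where $f(\distsymbol,t)=\sup_{|u|=\distsymbol}\Pr{\graphdistcost{0}{u}\le t}$ is exactly the quantity controlled by \Cref{lem:fkt}. Thus everything reduces to the bound $f(\distsymbol,t)\le c_f\distsymbol^{-\alphaparam d}h(t)$ from \Cref{lem:fkt}, followed by a bound on $h(t)$, which in turn only needs a good bound on $g(t)=\Expected{|\costballll{0}{t}|}$.

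The core step is to apply \Cref{thm:solvetheselfboundinginequalitysomehowmagicallywedontunderstandhowthisworksandrelyontheotherpaperhere} to $g$. Its hypotheses hold: $g(t)\ge 1$ trivially, $g(t)\le e^{Ct}$ by \Cref{thm:expectedballexpo}, and the self-bounding inequality $g(t)^{\alphaparam}\le c\bigl(t^{\alphaparam d}\int_0^t g(t-y)g(y)\,dy+1\bigr)$ is \Cref{lem:selfboundineq}. Matching this against the theorem's hypothesis $g(t)^{1/\theta}\le c_h\bigl(1+t^{\beta-1}\int_0^t g(y)g(t-y)\,dy\bigr)$ forces $\theta=1/\alphaparam$, $\beta=\alphaparam d+1$, $c_h=\max\{c,1\}$; crucially $\alphaparam\in(1,2)$ gives $\theta\in(\tfrac12,1)$, which is exactly where the assumption $\alphaparam<2$ enters (mirroring its role in \Cref{lem:mainSFPlemma}), and $\beta\ge 0$ is clear. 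The conclusion then reads $\log g(t)\le c_\theta(2\lambda t)^{\log_2(2\theta)}(\log(1+t^{\beta}))^{\log_2(1/\theta)}(1+o(1))$, and since $\log_2(2\theta)=\log_2(2/\alphaparam)=1/\bigDelta$ and $\log_2(1/\theta)=\log_2\alphaparam=1-1/\bigDelta$, and the factor $(2\lambda t)^{1/\bigDelta}$ contributes only a constant times $t^{1/\bigDelta}$, we get $\log g(t)\le c'\,t^{1/\bigDelta}(\log(1+t^{\beta}))^{1-1/\bigDelta}(1+o(1))$. As $\beta$ is a constant, $\log(1+t^{\beta})=(1+o(1))\beta\log(1+t)$, so $\log g(t)\le c''(\log(1+t))^{1-1/\bigDelta}t^{1/\bigDelta}(1+o(1))$.

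It remains to feed this back through $h$ and $f$. Using monotonicity of $g$, $h(t)\le t^{\alphaparam d}\cdot t\cdot g(t)^2+1\le t^{\alphaparam d+1}g(t)^2+1$, hence $\log h(t)\le 2\log g(t)+(\alphaparam d+1)\log(1+t)+O(1)$; the polynomial term is of strictly smaller order than $(\log(1+t))^{1-1/\bigDelta}t^{1/\bigDelta}$ and is absorbed into the $(1+o(1))$, giving $\log h(t)\le c'''(\log(1+t))^{1-1/\bigDelta}t^{1/\bigDelta}(1+o(1))$. Combining $\Pr{\graphdistcost{x}{y}\le t}\le f(|x-y|,t)\le c_f|x-y|^{-\alphaparam d}h(t)$ with this and taking logarithms yields $\log\Pr{\graphdistcost{x}{y}\le t}\le c'''(\log(1+t))^{1-1/\bigDelta}t^{1/\bigDelta}(1+o(1))-\alphaparam d\log|x-y|+\log c_f$, which is the statement after renaming $\max\{c''',\log c_f\}$ to $c$; for small $t$, where the stretched-exponential term is bounded, one simply uses $\log\Pr{\cdot}\le 0$, absorbed into the additive $+c$.

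The probabilistic substance is already in \Cref{thm:expectedballexpo}, \Cref{lem:selfboundineq}, \Cref{lem:fkt}, and the black box \Cref{thm:solvetheselfboundinginequalitysomehowmagicallywedontunderstandhowthisworksandrelyontheotherpaperhere}; what remains is bookkeeping. The only points that deserve care — and the closest thing to an obstacle — are the parameter identification into the black box (verifying $\theta=1/\alphaparam\in(\tfrac12,1)$ and that $\log_2(2\theta)$, $\log_2(1/\theta)$ genuinely evaluate to $1/\bigDelta$ and $1-1/\bigDelta$) and checking that the stray polynomial factors in $h(t)$ together with the $\log(1+t^{\beta})$ versus $\log(1+t)$ discrepancy are truly lower order and can be swept into the $(1+o(1))$ without disturbing the leading exponents.
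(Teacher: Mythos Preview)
Your proof is correct and follows essentially the same route as the paper: reduce to \CFFPP{}, translate to the origin, verify the hypotheses of \Cref{thm:solvetheselfboundinginequalitysomehowmagicallywedontunderstandhowthisworksandrelyontheotherpaperhere} with $\theta=1/\alphaparam$ and $\beta=\alphaparam d+1$, and then feed the resulting bound on $g$ back through \Cref{lem:fkt}. The only deviation is in the final bookkeeping: the paper observes that the majorant $G(t)$ satisfies the self-bounding relation \emph{with equality} and substitutes this directly into the expression for $h(t)$, whereas you bound $h(t)\le t^{\alphaparam d+1}g(t)^2+1$ by monotonicity and absorb the extra polynomial factor into the $(1+o(1))$. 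Both arguments are valid and yield the same conclusion.
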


\begin{proof}
As discussed, it suffices to show the claim for \CFFPP{}. Moreover, by translation invariance, we can replace $x$ with the origin $0$ and $y$ by $u = y - x$. By Lemma \ref{lem:selfboundineq} we have
    \begin{equation}\label{eq:Gequation}
        g(t)^{\alphaparam} \leq c_1 \left( t^{\alphaparam d} \int_{0}^{t} g(t - y)g(y) \, dy + 1 \right)
    \end{equation}
    for some $c_1, \delta > 0$ depending only on parameters of the model. 
    We also know that $g(t) \le e^{C t}$ from Theorem \ref{thm:expectedballexpo} for a $C$ with the same dependencies. Therefore, using Theorem 5.3 from \cite{completeFPP} (stated above the current theorem) with $\theta = \frac{1}{\alphaparam}, \beta = \alphaparam d + 1$ and with the inequality $1 + t^{\beta} \le (1 + t)^{\beta}$, we have $g(t) \le G(t)$, where
    \[\log{G(t)} =  c(\log(1 + t))^{1-\frac{1}{\bigDelta}}t^{\frac{1}{\bigDelta}}(1 + o(1)),\]
    where $c$ depends only on parameters of the model.
    Now, by Lemma \ref{lem:fkt}, we have
    \begin{align*}\log{\Pr{\graphdistcost{0}{u} \le t}} & \le \log{f(|u|, t)}\\
    & \leq c' - \alphaparam d \log |u| + \log \left(t^{\alphaparam d} \int_{0}^{t} g(t - y)(g(y) - 1) \, dy + e^{-\delta t} \right)
    \end{align*}
    The constant $c'$ above comes from \Cref{lem:fkt}. The final step is to notice that inequality~\eqref{eq:Gequation} is satisfied as equality for $G(t)$, as in \cite{completeFPP}. Then, substituting the expression for it in the resulting inequality gives the desired bound.
\end{proof}

    We can use the coupling employed in Theorem \ref{thm:mainSFPlowerboundTheorem} to establish the above tail bound even if $2\alphaparam \ge \tauparam - 1$ but with $\bigDelta$ replaced by $\Delta'' = \Delta(\min\{\alphaparam, \frac{\tauparam - 1}{2}\} - \eps)$ for arbitrarily small $\eps$. We then have obtained Theorem~\ref{thm:lowerboundSFPsquared}. Since the proof is almost verbatim the same as that of \Cref{thm:mainSFPlowerboundTheorem}, we omit it. Note that one minor technical step that is required additionally to get the bound claimed in \Cref{thm:lowerboundSFPsquared} using the one obtained from \Cref{lem:lowerboundSFFPP} is to introduce an auxiliary constant $\eps'$ in addition to the $\eps$ from the statement of the theorem. This swallows the $(\log(1 + t))^{1-\frac{1}{\bigDelta}}$ factor.

\bibliographystyle{abbrv}	
\bibliography{bibliography}

\begin{thebibliography}{10}

\bibitem{abdullah2017typical}
M.~A. Abdullah, M.~Bode, and N.~Fountoulakis.
\newblock Typical distances in a geometric model for complex networks.
\newblock {\em Internet Mathematics}, 1:38, 2017.

\bibitem{Arratia_Garibaldi_Mower_Stark_2015}
R.~Arratia, S.~Garibaldi, L.~Mower, and P.~B. Stark.
\newblock Some people have all the luck.
\newblock {\em Mathematics Magazine}, 88(3):196–211, June 2015.

\bibitem{Berger_2004}
N.~Berger.
\newblock A lower bound for the chemical distance in sparse long-range percolation models.
\newblock {\em arXiv preprint math/0409021}, 2004.

\bibitem{Biskup_2004}
M.~Biskup.
\newblock On the scaling of the chemical distance in long-range percolation models.
\newblock {\em The Annals of Probability}, 32(4):2938–2977, Oct. 2004.

\bibitem{Biskup_2011}
M.~Biskup.
\newblock Graph diameter in long-range percolation.
\newblock {\em Random Structures \& Algorithms}, 39(2):210–227, 2011.

\bibitem{Biskup_Lin_2019}
M.~Biskup and J.~Lin.
\newblock Sharp asymptotic for the chemical distance in long-range percolation.
\newblock {\em Random Structures \& Algorithms}, 55(3):560–583, 2019.

\bibitem{blasius2024external}
T.~Bl{\"a}sius and P.~Fischbeck.
\newblock On the external validity of average-case analyses of graph algorithms.
\newblock {\em ACM Transactions on Algorithms}, 20(1):1--42, 2024.

\bibitem{blasius2020hyperbolic}
T.~Bl\"{a}sius, T.~Friedrich, M.~Katzmann, and A.~Krohmer.
\newblock Hyperbolic embeddings for near-optimal greedy routing.
\newblock {\em ACM J. Exp. Algorithmics}, 25, 2020.

\bibitem{blasius2020efficiently}
T.~Bl{\"a}sius, T.~Friedrich, and C.~Weyand.
\newblock Efficiently computing maximum flows in scale-free networks.
\newblock {\em arXiv preprint arXiv:2009.09678}, 2020.

\bibitem{boguna2010sustaining}
M.~Bogun{\'a}, F.~Papadopoulos, and D.~Krioukov.
\newblock Sustaining the internet with hyperbolic mapping.
\newblock {\em Nature communications}, 1(1):62, 2010.

\bibitem{bringmann2016average}
K.~Bringmann, R.~Keusch, and J.~Lengler.
\newblock Average distance in a general class of scale-free networks with underlying geometry.
\newblock {\em arXiv preprint arXiv:1602.05712}, 2016.

\bibitem{bringmann2019geometric}
K.~Bringmann, R.~Keusch, and J.~Lengler.
\newblock Geometric inhomogeneous random graphs.
\newblock {\em Theoretical Computer Science}, 760:35--54, 2019.

\bibitem{bringmann2022greedy}
K.~Bringmann, R.~Keusch, J.~Lengler, Y.~Maus, and A.~R. Molla.
\newblock Greedy routing and the algorithmic small-world phenomenon.
\newblock {\em Journal of Computer and System Sciences}, 125:59--105, 2022.

\bibitem{completeFPP}
S.~Chatterjee and P.~S.~Dey.
\newblock Multiple phase transitions in long-range first-passage percolation on square lattices.
\newblock {\em Communications on Pure and Applied Mathematics}, 69(2):203--256, 2016.

\bibitem{deijfen2013scale}
M.~Deijfen, R.~Hofstad, and G.~Hooghiemstra.
\newblock Scale-free percolation.
\newblock {\em Annales de l Institut Henri Poincaré Probabilités et Statistiques}, 49:817--838, 08 2013.

\bibitem{deprez2015inhomogeneous}
P.~Deprez, R.~S. Hazra, and M.~V. W{\"u}thrich.
\newblock Inhomogeneous long-range percolation for real-life network modeling.
\newblock {\em Risks}, 3(1):1--23, 2015.

\bibitem{goldberg2021increasing}
L.~A. Goldberg, J.~Jorritsma, J.~Komj{\'a}thy, and J.~Lapinskas.
\newblock Increasing efficacy of contact-tracing applications by user referrals and stricter quarantining.
\newblock {\em Plos one}, 16(5):e0250435, 2021.

\bibitem{granovetter1973strength}
M.~S. Granovetter.
\newblock The strength of weak ties.
\newblock {\em American journal of sociology}, 78(6):1360--1380, 1973.

\bibitem{hao2023graph}
N.~Hao and M.~Heydenreich.
\newblock Graph distances in scale-free percolation: the logarithmic case.
\newblock {\em Journal of Applied Probability}, 60(1):295--313, 2023.

\bibitem{jorritsma2020not}
J.~Jorritsma, T.~Hulshof, and J.~Komj{\'a}thy.
\newblock Not all interventions are equal for the height of the second peak.
\newblock {\em Chaos, Solitons \& Fractals}, 139:109965, 2020.

\bibitem{koch2021bootstrap}
C.~Koch and J.~Lengler.
\newblock Bootstrap percolation on geometric inhomogeneous random graphs.
\newblock {\em Internet Mathematics}, page 18995, 2021.

\bibitem{komjathy2021penalising}
J.~Komj{\'a}thy, J.~Lapinskas, and J.~Lengler.
\newblock {Penalising transmission to hubs in scale-free spatial random graphs}.
\newblock {\em Annales de l'Institut Henri Poincaré, Probabilités et Statistiques}, 57(4):1968 -- 2016, 2021.

\bibitem{komjathy2023four}
J.~Komj{\'a}thy, J.~Lapinskas, J.~Lengler, and U.~Schaller.
\newblock Four universal growth regimes in degree-dependent first passage percolation on spatial random graphs i.
\newblock {\em arXiv preprint arXiv:2309.11840}, 2023.

\bibitem{komjathy2023fourtwo}
J.~Komj{\'a}thy, J.~Lapinskas, J.~Lengler, and U.~Schaller.
\newblock Four universal growth regimes in degree-dependent first passage percolation on spatial random graphs ii.
\newblock {\em arXiv preprint arXiv:230911880}, 2023.

\bibitem{komjathy2020explosion}
J.~Komjáthy and B.~Lodewijks.
\newblock Explosion in weighted hyperbolic random graphs and geometric inhomogeneous random graphs.
\newblock {\em Stochastic Processes and their Applications}, 130(3):1309--1367, 2020.

\bibitem{krioukov2010hyperbolic}
D.~Krioukov, F.~Papadopoulos, M.~Kitsak, A.~Vahdat, and M.~Bogun{\'a}.
\newblock Hyperbolic geometry of complex networks.
\newblock {\em Physical Review E}, 82(3):036106, 2010.

\bibitem{odor2021switchover}
G.~{\'O}dor, D.~Czifra, J.~Komj{\'a}thy, L.~Lov{\'a}sz, and M.~Karsai.
\newblock Switchover phenomenon induced by epidemic seeding on geometric networks.
\newblock {\em Proceedings of the National Academy of Sciences}, 118(41):e2112607118, 2021.

\bibitem{Reimer_2000}
D.~Reimer.
\newblock Proof of the van den berg–kesten conjecture.
\newblock {\em Combinatorics, Probability and Computing}, 9(1):27–32, 2000.

\bibitem{Trapman_2010}
P.~Trapman.
\newblock The growth of the infinite long-range percolation cluster.
\newblock {\em The Annals of Probability}, 38(4):1583–1608, 2010.

\end{thebibliography}

\appendix

\section{On the upper bound claim in \cite{hao2023graph}}\label{sec:wrong_proof}

In this section, we briefly explain the mistake in the upper bound proof for graph distances in~\cite{hao2023graph}. There, the authors add edges for some paths of length 2 in SFP, thus cutting graph distances at most in half. Let us call the resulting graph 2-SFP. Then they compare 2-SFP to LRP with different parameters. The 2-SFP graph does not dominate an LRP because the edges are correlated, but for every edge $e$, $\Pr{e \text{ open in 2-SFP}} \ge \Pr{e \text{ open in LRP}}$.

Then they make the argument that the correlations are positive, so that for every fixed $x$-$y$-path $\pi$, by the FKG inequality:
\begin{align}\label{eq:2-SFP}
\Pr{\text{$\pi$ open in 2-SFP}} \ge \Pr{\text{$\pi$ open in LRP}}.
\end{align}
That is correct, but it does not imply the statement that we would want for a suitable $k$:
\begin{align}\label{eq:2-SFP-probability}
\Pr{\text{$\exists$ $x$-$y$-path $\pi$ of length $\le k$: $\pi$ open in 2-SFP}} \ge \Pr{\text{$\exists$ $x$-$y$-path $\pi$ of length $\le k$: $\pi$ open in LRP}}. 
\end{align}
Instead,~\eqref{eq:2-SFP} only implies by summing over all $x$-$y$-paths of length at most $k$:
\begin{align}\label{eq:2-SFP-expectation}
\Expected{\text{\# of open $x$-$y$-paths of length $\le k$ in 2-SFP}} \ge \Expected{\text{\# of open $x$-$y$-paths of length $\le k$ in LRP}}.
\end{align}
However, it is not hard to see that due to the correlations the left hand side of~\eqref{eq:2-SFP-expectation} is dominated by low-probability events where the number of paths is very large. In particular, the upper bound proof for LRP is centered around the concept of \emph{hierarchies}, and the first step of a hierarchy is to find an edge of length $\Theta(|x-y|)$, where the two endpoints lie respectively close to $x$ and $y$. It is easy to see that for some parameters considered in~\cite{hao2023graph}, with high probability such edges do exist in LRP but do not exist in 2-SFP. However, the \emph{expected number} of such edges in 2-SFP (corresponding to the left hand side of~\eqref{eq:2-SFP-expectation}) is still large because the unlikely event of a vertex of weight $\Theta(|x-y|)$ at distance $|x-y|$ from $x$ induces a very large number of $2$-paths in SFP, which become edges in $2$-SFP.

Hence, the argument in~\cite{hao2023graph} shows~\eqref{eq:2-SFP-expectation} but not~\eqref{eq:2-SFP-probability}, and this is not a minor omission but a major gap. In fact, we conjecture that the upper bound statements in~\cite{hao2023graph} are false, and that the exponent $\Delta(\alpha)= 1/\log_2(2/\alpha)$ is tight throughout the polylogarithmic regime, i.e.\ for all $\tau >3$ and $\alpha \in (1,2)$.

\section{BK inequality and deferred proofs}\label{sec:BK_inequality_and_proofs}

\subsection{BK Inequality}
To formally define the BK inequality, we let $\Omega_1, \Omega_2, \ldots, \Omega_n$ be probability spaces each consisting of finitely many elements. We now consider the product probability space $\Omega = \Omega_1 \times \Omega_2 \times \ldots \times \Omega_n$ and we say that two events $A, B$ \emph{occur disjointly} -- denoted by $A \disjointoccurence B$ -- if there are disjoint sets $I, J \subseteq [n]$ that ``witness'' the occurence of $A$ and $B$, respectively. More precisely, for a set $I \subseteq [n]$ and an element $\omega \in \Omega$, we define the ``cylinder set'' $C(\omega, I)$ as the set of all $\omega' \in \Omega$ such that $\omega_i = \omega_i'$ for all $i \in I$. With this, we define the event $[A]_I = \{ \omega \in \Omega \mid C(\omega, I) \subseteq A \}$, which informally consists of all the outcomes for which it suffices to verify that $A$ occured by only looking at the coordinates in $I$. We then say that $A \disjointoccurence B$ occurs if there are disjoint $I, J \subseteq [n]$ such that $[A]_I$ and $[B]_J$ occur, i.e., we define \begin{align*}
    A \disjointoccurence B = \bigcup_{ \substack{I, J \subseteq [n] \\ I \cap J = \emptyset } } [A]_I \cap [B]_J.    
\end{align*} The BK inequality then bounds the probability of $A \disjointoccurence B$ as follows.
\begin{thm}[BK-Inequality, Theorem 1.1 in \cite{Reimer_2000}]\label{thm:BK}
    For any two events $A, B$, we have \begin{align*}
        \Pr{A \disjointoccurence B} \le \Pr{A}\Pr{B}.
    \end{align*}
\end{thm}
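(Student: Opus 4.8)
This is exactly Reimer's theorem~\cite{Reimer_2000}, so in the paper I would simply cite it; what follows is a sketch of how a proof proceeds and of where the real difficulty lies. The plan has three stages: a reduction to the Boolean cube, the (comparatively easy) special case of increasing events — which is all we actually use — and the general combinatorial statement.

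\textbf{Reduction to $\{0,1\}^N$ with product measure.} The first step is to replace the abstract product $\Omega = \Omega_1 \times \dots \times \Omega_n$ by a Boolean cube: each finite factor $\Omega_i$ can be generated by a finite sequence of independent biased coin flips via a binary decision tree, and under this encoding products map to products while disjoint occurrence is preserved, since a disjoint pair of index sets $I, J \subseteq [n]$ pulls back to a disjoint pair of coordinate blocks. Because the two sides of the inequality depend continuously on the single-coordinate marginals, it then suffices to treat marginals that are dyadic rationals, and a product measure with dyadic marginals on a cube is itself the uniform measure on a larger cube after splitting each coordinate into several i.i.d.\ fair bits. So it is enough to prove the counting form $|A \disjointoccurence B| \le 2^{-N}\,|A|\,|B|$ for all $A, B \subseteq \{0,1\}^N$.

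\textbf{The increasing case.} In our application $A$ and $B$ are increasing events (``there is an open path from $x$ to $u$'' and ``there is an open path from $v$ to $y$''), and for increasing events $A \disjointoccurence B$ coincides with the usual disjoint occurrence $A \circ B$. Here I would run the classical van den Berg--Kesten argument: an induction on $N$ in which one conditions on a single coordinate and uses monotonicity (the two fibres of $A$, and of $B$, are nested) together with a resampling/coupling of that coordinate, so that the fibres of $A \circ B$ are controlled by disjoint occurrences of the appropriate — still increasing — fibres of $A$ and $B$; the inductive hypothesis then applies to these and the step closes with an elementary inequality between the relevant marginal probabilities. This already yields a short self-contained proof of the version of the BK inequality used throughout this paper.

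\textbf{The general case, and the main obstacle.} The genuinely hard point is dropping monotonicity, i.e.\ the statement as written for arbitrary events $A, B$. The induction above breaks down and no coupling-style proof is known; Reimer's argument instead proves the counting form by a delicate combinatorial compression/reflection estimate, bounding $|A \disjointoccurence B|$ by a product that involves $A$ together with a reflected copy of $B$. This is the one step I would not attempt to reprove and would simply invoke from~\cite{Reimer_2000}. So the main obstacle is precisely the general non-monotone case, which is exactly the content of Reimer's theorem; by comparison, the reduction to the cube and the increasing case are routine.
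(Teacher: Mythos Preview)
Your proposal is correct and aligns with the paper: the paper does not prove this theorem at all but simply states it as Theorem~1.1 of~\cite{Reimer_2000} and cites it. Your additional sketch of the reduction to the Boolean cube, the increasing case, and Reimer's general argument is accurate extra context, but it goes beyond what the paper provides; for the purposes of this paper, the bare citation suffices.
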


We will in fact need a generalization of the disjoint occurence to more than two events like the one obtained in \cite{Arratia_Garibaldi_Mower_Stark_2015}. For events $A_1, \ldots, A_k$, we define \begin{align*}
    A_1 \disjointoccurence A_2 \disjointoccurence \ldots \disjointoccurence A_k = \bigcup_{ \substack{I_1, \ldots, I_k \subseteq [n] \\ I_1, \ldots, I_k \text{ pairwise disjoint} } } [A_1]_{I_1} \cap \ldots \cap [A_k]_{I_k}.
\end{align*} One can then verify that $A \disjointoccurence B \disjointoccurence C \subseteq (A \disjointoccurence B) \disjointoccurence C$ (see \cite[Proposition 3]{Arratia_Garibaldi_Mower_Stark_2015} for a formal proof), so we can also apply the BK-inequality repeatedly. 
\begin{corollary}[Corollary 1 in \cite{Arratia_Garibaldi_Mower_Stark_2015}]\label{cor:BK}
    For events $A_1, \ldots, A_k$, we have \begin{align*}
        \Pr{A_1 \disjointoccurence A_2 \disjointoccurence \ldots \disjointoccurence A_k} \le \prod_{i=1}^k \Pr{A_i}.
    \end{align*}
\end{corollary}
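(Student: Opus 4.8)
The plan is to prove the statement by induction on $k$, peeling off one event at a time and reducing everything to Reimer's two-event BK-inequality (Theorem~\ref{thm:BK}).

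For $k=1$ the claim is trivial and for $k=2$ it is precisely Theorem~\ref{thm:BK}. For the inductive step, assume the bound $\Pr{B_1 \disjointoccurence \ldots \disjointoccurence B_{k-1}} \le \prod_{i=1}^{k-1}\Pr{B_i}$ holds for any $k-1$ events. The first thing I would establish is the set containment
\[
A_1 \disjointoccurence A_2 \disjointoccurence \ldots \disjointoccurence A_k \subseteq \bigl(A_1 \disjointoccurence A_2 \disjointoccurence \ldots \disjointoccurence A_{k-1}\bigr) \disjointoccurence A_k,
\]
which is the $k$-event analogue of the three-event inclusion already recorded above (following \cite[Proposition 3]{Arratia_Garibaldi_Mower_Stark_2015}). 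The argument is purely combinatorial: if pairwise disjoint index sets $I_1,\ldots,I_k \subseteq [n]$ witness an outcome $\omega$, meaning $C(\omega, I_j) \subseteq A_j$ for every $j$, then with $I := I_1 \cup \ldots \cup I_{k-1}$ one checks that $C(\omega, I) \subseteq [A_1]_{I_1} \cap \ldots \cap [A_{k-1}]_{I_{k-1}} \subseteq A_1 \disjointoccurence \ldots \disjointoccurence A_{k-1}$ (verifying the first $k-1$ events only ever requires inspecting the coordinates in $I$), while $I$ and $I_k$ remain disjoint; hence $\omega$ lies in $(A_1 \disjointoccurence \ldots \disjointoccurence A_{k-1}) \disjointoccurence A_k$.

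Granting the containment, the rest is immediate. Writing $B := A_1 \disjointoccurence \ldots \disjointoccurence A_{k-1}$, monotonicity of $\mathbb{P}$ followed by Theorem~\ref{thm:BK} applied to the two events $B$ and $A_k$ gives
\[
\Pr{A_1 \disjointoccurence \ldots \disjointoccurence A_k} \;\le\; \Pr{B \disjointoccurence A_k} \;\le\; \Pr{B}\,\Pr{A_k},
\]
and the induction hypothesis bounds $\Pr{B} \le \prod_{i=1}^{k-1}\Pr{A_i}$, whence $\Pr{A_1 \disjointoccurence \ldots \disjointoccurence A_k} \le \prod_{i=1}^{k}\Pr{A_i}$, completing the induction.

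The only genuinely non-routine ingredient is the set containment, i.e.\ checking that disjoint witnesses for the $k$-fold disjoint occurrence can be regrouped into witnesses for the nested form; everything else is bookkeeping. Since that inclusion is already available (explicitly for $k=3$, and in general via \cite{Arratia_Garibaldi_Mower_Stark_2015}), one may alternatively invoke it as a black box, in which case the proof collapses to the short induction above.
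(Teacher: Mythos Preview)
Your proposal is correct and follows exactly the approach the paper indicates: the paper does not give a standalone proof but simply records the containment $A \disjointoccurence B \disjointoccurence C \subseteq (A \disjointoccurence B) \disjointoccurence C$ (citing \cite[Proposition 3]{Arratia_Garibaldi_Mower_Stark_2015}) and remarks that one can then ``apply the BK-inequality repeatedly.'' Your induction is precisely that repeated application spelled out, and your verification of the $k$-event containment is the natural generalization of the three-event case.
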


\Cref{thm:BK} later allows us to deduce that the probability of finding two disjoint paths between two given pairs of vertices $(u_1, v_1)$ and $(u_2, v_2)$ is at most as large as the product of the probabilities of finding a path between $(u_1, v_1)$ and $(u_2, v_2)$, respectively. 

\subsection{Deferred proofs}

\mainSFPlowerboundTheorem*
\begin{proof}[Proof of \Cref{thm:mainSFPlowerboundTheorem}]
First of all, note that we are interested in small $\eps$ anyway, so it may be implicitly assumed that $\eps$ is small enough in the following. Let us deal with the easy case $\alphaparam < \tauparam - 2$ first. Then, the claim follows by invoking Lemma \ref{lem:mainSFPlemma} and integrating out the weights of $x, y$. We only add a constant factor in the bound of Lemma \ref{lem:mainSFPlemma} with this integration, since we \emph{always} have $\alphaparam - \tauparam < -1$ (this is a different condition than the one assumed), as $\alphaparam < 2$ and $\tauparam > 3$. This constant is absorbed into $\Cconst{}$. Moreover, we notice that in this case we even get a stronger bound, as we have $\Delta(\alphaparam)$ instead of the required $\Delta'$. 

In the case $\alphaparam \ge \tauparam - 2$, an extra step is needed to finish the proof. We use Lemma \ref{lem:changealpha} to establish a coupling between our graph of interest $G$ and another SFP graph $G'$ for which it holds that $\alphaparam' < \tauparam - 2$, where $\alphaparam' = \tau - 2 - \eps$ is the long-range parameter of $G'$. The parameter $\tauparam$ is unchanged. Note that we also have a different $\percparam'$ in $G'$, but this is immaterial as it is still a function of the original graph parameters. We can see that the connection probabilities conditioned on the weights (which are sampled in the same way for both $G$ and $G'$) are pointwise larger in $G'$. Therefore, we can establish a coupling where $G$ is always a subgraph of $G'$, meaning that it suffices to show the claim for $G'$. This is done in precisely the same way as in the first case.
\end{proof}

\paragraph{\textbf{Extending the proof of \Cref{thm:mainSFPlowerboundTheorem} to GIRG}}\label{sec:proof-GIRG} In the proof of \Cref{lem:mainSFPlemma} which is the main building block in proving \Cref{thm:mainSFPlowerboundTheorem}, we have used that the probability of connection of two vertices $u, v$ conditioned on their weights is at most 
\[\min \left\{1,  \lambda \left(\frac{w_u w_v}{|u-v|^{d}}\right)^{\alphaparam}\right\}.\]

However, we could have just as well had any constant factor in front of this term, and the proof would still work. In particular, we could have the constant $\cupp$ from equation~\eqref{eq:GIRG} in the GIRG definition.

One other point we have to address is that vertices in GIRGs have random positions, whereas for SFP we have the grid positions deterministically. However, it is still true that the expected number of GIRG vertices in some geometric ball is up to constant factors the same as the number of SFP vertices in the same ball. Therefore, even taking the randomness of the positions into account, the integrals $I_{u, i}$ etc.\ used in the proof are still valid. In particular, we can now interpret $I_{u,i}$ and $I_{v,k-i}$ as integrating (and thus taking a union bound) over the \emph{potential positions} of $u$ and $v$ respectively, and for each such potential position multiplying the other terms with the probability density function that a vertex $u$ respectively $v$ exists there (which is constant, and so can be swallowed up by $c_d$). One may worry that the probability densities for the existence of $u$ and $v$ are not independent in GIRG, but note that they are negatively correlated (a vertex at some position makes it less likely that there is a vertex at some other position), so we can upper bound the probability density for the existence of $u$ and $v$ at two specific positions by the product of the probability densities for the existence of $u$ and that of $v$ at these respective positions. The case when the longest edge is incident to either $x$ or $y$ can be handled similarly.

\end{document}